\documentclass[11pt,a4paper,leqno]{amsart}
\usepackage{amsmath,amsfonts,amssymb,amsthm}
\usepackage[alphabetic]{amsrefs}
\usepackage{hyperref, url}
\usepackage{graphicx, color}
\usepackage{enumerate}
\usepackage{fullpage}
\usepackage{xspace}
\setcounter{tocdepth}{2}

\sloppy

\newtheorem{lemma}{Lemma}
\newtheorem{proposition}{Proposition}
\newtheorem{theorem}{Theorem}
\newtheorem{corollary}{Corollary}

\theoremstyle{definition}
\newtheorem{example}{Example}
\newtheorem{remark}{Remark}

\newtheorem{definition}{Definition}
\newtheorem*{rexample}{Running example}
\newtheorem{conjecture}{Conjecture}
\newtheorem{claim}{Claim}

\newcommand{\covectors}{\ensuremath{\mathcal{L}}}
\newcommand{\cocircuits}{\ensuremath{\mathcal{C}^*}}

\newcommand{\rk}{\ensuremath{{\rm rank}}}
\newcommand{\uX}{\ensuremath{\underline{\mathcal{X}}}\xspace}
\newcommand{\oX}{\ensuremath{\overline{\mathcal{X}}}\xspace}
\newcommand{\ac}{\ensuremath{{\rm amp}}}
\newcommand{\product}{\square}

\newcommand{\C}{\mathrm C}
\newcommand{\F}{\mathrm F}
\newcommand{\Q}{\mathrm Q}

\DeclareMathOperator{\pr}{\rm pr}

\newcommand{\amp}{\ensuremath{{\rm amp}}}

\renewcommand{\SS}{\ensuremath{\mathcal{S}}}
\newcommand{\uparr}{\ensuremath{\hspace{3pt}\uparrow\hspace{-2pt}}}

\DeclareMathOperator{\vcd}{VC-dim}
\renewcommand{\SS}{\ensuremath{\mathcal{S}}}

\definecolor{color1}{RGB}{255,128,0}

\begin{document}


\centerline{\bf\Large Ample completions of  oriented matroids}
\centerline{\bf\Large and complexes of uniform oriented matroids}

\bigskip
\centerline{\sc \large Victor Chepoi$^{1}$, Kolja Knauer$^{1,2}$, and Manon Philibert$^{1}$}

\bigskip
\centerline{$^{1}$LIS, Aix-Marseille Universit\'e, CNRS, and Universit\'e de Toulon}
\centerline{Facult\'e des Sciences de Luminy, F-13288 Marseille Cedex 9, France}
\centerline{ {\sf \{victor.chepoi,kolja.knauer,manon.philibert\}@lis-lab.fr} }

\bigskip
\centerline{$^{2}$Departament de Matem\`atiques i Inform\`atica, Universitat de Barcelona (UB),}
\centerline{Barcelona, Spain}

\bigskip\medskip
\noindent
{\footnotesize {\bf Abstract} This paper considers completions of tope graphs of COMs
(complexes of oriented matroids) to ample partial cubes of the same
VC-dimension. We show that these exist for OMs (oriented matroids) and CUOMs
(complexes of uniform oriented matroids). This implies that tope graphs of OMs and CUOMs
satisfy the sample compression conjecture -- one of the central open questions of learning theory.
We conjecture that the tope graph of every COM can be completed to an ample partial cube without
increasing the VC-dimension.}


\section{Introduction}
Oriented matroids (OMs), co-invented by Bland and Las Vergnas~\cite{BlLV} and
Folkman and Lawrence~\cite{FoLa}, represent a unified combinatorial theory of
orientations of  ordinary matroids. They capture the basic properties of sign
vectors representing the circuits in a directed graph and the regions in a
central hyperplane arrangement in  $\mathbb{R}^m$. Oriented matroids are
systems of sign vectors 
satisfying three simple axioms (composition, strong elimination, and symmetry)
and may be defined
in a multitude of 
ways, see the book by Bj\"orner et al.~\cite{BjLVStWhZi}. The tope graphs of
OMs 
can be viewed as 
subgraphs of the hypercube $\Q_m$ 
satisfying  two strong properties: they are centrally-symmetric and are
isometric subgraphs of $\Q_m$, i.e., are antipodal partial
cubes~\cite{BjLVStWhZi}.

Ample set systems (AMPs) have been introduced by Lawrence \cite{La} as asymmetric
counterparts  of  oriented  matroids and have been re-discovered independently
by several works in different contexts~\cite{BaChDrKo,BoRa,Wiedemann}.
Consequently, they received different names: lopsided~\cite{La},
simple~\cite{Wiedemann}, extremal~\cite{BoRa}, and ample~\cite{BaChDrKo,Dr}.
Lawrence~\cite{La} defined ample set systems for the investigation of the possible
sign patterns realized by a convex set in $\mathbb{R}^m$. Ample sets admit a
multitude of combinatorial and geometric characterizations
\cite{BaChDrKo,BoRa,La} and comprise many natural examples arising from
discrete geometry, combinatorics, 
and geometry of groups \cite{BaChDrKo,La}.
Analogously to tope graphs of OMs, AMPs induce isometric subgraphs of $\Q_m$.
In fact, they satisfy a stronger property: any two parallel cubes are
connected in the set by  a shortest path of parallel cubes.

Complexes of oriented matroids (COMs) have been introduced and investigated in
\cite{BaChKn} as a far-reaching natural common generalization of oriented
matroids and ample set systems. COMs are defined in a similar way as OMs, simply
replacing the global axiom of symmetry by a local axiom of face symmetry. This
simple alteration leads to a rich combinatorial and geometric structure that is
build from OM faces but is quite different from OMs. Replacing each face by a
PL-ball, each COM leads to a contractible cell complex (topologically, OMs are
spheres and AMPs are contractible cubical complexes). The tope graphs of COMs
are still isometric subgraphs of hypercubes; as such, they have been
characterized  in \cite{KnMa}.

Set families are fundamental objects in combinatorics, algorithmics, machine
learning, discrete geometry, and combinatorial optimization. The
Vapnik-Chervonenkis dimension (\emph{VC-dimension} for short)  of a set family
was introduced by  Vapnik and Chervonenkis \cite{VaCh} and plays  a central
role in the theory of PAC-learning. The VC-dimension was adopted in the above
areas as a complexity measure and as a combinatorial dimension of the set
family. The topes of OMs,  COMs, and AMPs (viewed as isometric subgraphs of
hypercubes) give rise to set families for which the VC-dimension has a
particular significance: the VC-dimension of an AMP is the largest dimension of
a cube of its cube complex, the VC-dimension of an OM is its rank, and the
VC-dimension of a COM is the largest VC-dimension of its faces.

Littlestone and Warmuth~\cite{LiWa} introduced the sample compression technique
for deriving generalization bounds in machine learning. Floyd and
Warmuth~\cite{FlWa} asked whether any set family of VC-dimension $d$ has a
sample compression scheme of size~$O(d)$. This question, known as the {\it 
sample compression conjecture}, remains one of the
oldest open problems in machine learning. It was shown in \cite{MoYe} that
labeled compression schemes of size $O(2^d)$ exist. Moran and Warmuth
\cite{MoWa} designed labeled compression schemes of size $d$ for ample set systems.
Chalopin et al. \cite{ChChMoWa} designed (stronger) unlabeled compression
schemes of size $d$ for maximum families and characterized such schemes for
ample set systems.
In view of the above, it was noticed in \cite{RuRuBa} and \cite{MoWa} that  the
sample compression conjecture would be solved if {\it any
set family of VC-dimension $d$ can be completed to an ample (or maximum) set system of
VC-dimension $O(d)$} or {\it covered by $O(2^d)$ ample set systems of VC-dimension
$O(d)$.}

This opens a perspective that apart from its application to sample compression,
is interesting in its own right: ample completions of structured set families. This is
extending a given set system to an ample system by adding sets.
A natural problem is ample completions of set families defined by \emph{partial
cubes} (i.e., isometric subgraphs of hypercubes). In~\cite{ChKnPh}, we prove
that any partial cube of VC-dimension 2 admits an ample completion of
VC-dimension 2. Moreover, we give a set family of VC-dimension 2 which has no
ample completion of the same VC-dimension. In the present paper, we give an
example of a partial cube of VC-dimension 3 which cannot be completed to an
ample set system of VC-dimension 3. Hence, in higher dimension we cannot complete all
partial cubes without increasing the VC-dimension. In the light of the above
perspective, one may ask if \emph{there exists a constant $c$ such that every
partial cube of VC-dimension $d$ admits an ample completion of VC-dimension
$\le cd$?} Even stronger, we wonder if \emph{partial cubes of VC-dimension $d$
admit an ample completion of VC-dimension $d+c$}. Note that no such additive
constant $c$ exists for general set families~\cite{RuRuBa}. In \cite{ChKnPh},
we perform the ample completion of a partial cube of VC-dimension $2$ in two
steps. First, we show that they can be completed to tope graphs of COMs and
next we complete the resulting graphs into ample ones, in both cases,
without increasing the VC-dimension. In the present article, we are 
interested in the completion of this intermediate class of partial cubes. For
COMs, we are inclined to
believe that the following stronger result holds:

\begin{conjecture} \label{conjectureCOM}
	The tope graph of any COM of VC-dimension $d$ has an ample completion of
	VC-dimension $d$.
\end{conjecture}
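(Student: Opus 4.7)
The plan is to proceed by induction on the number of maximal faces of the COM $(\mathcal{L},E)$. If $\mathcal{L}$ has a single maximal face, then $\mathcal{L}$ is an OM and the statement follows from the main theorem of the present paper. Otherwise, pick a maximal face $F$ of $\mathcal{L}$ and decompose $\mathcal{L}$ into $F$ together with the sub-COM $\mathcal{L}'$ obtained by removing the relative interior of $F$. Both $\mathcal{L}'$ and $F$ are COMs of VC-dimension at most $\vcd(\mathcal{L})$, and $\mathcal{L}'$ has strictly fewer maximal faces. By induction $\mathcal{L}'$ admits an ample completion $\widetilde{\mathcal{L}'}$ of the same VC-dimension, and by the main theorem of this paper the OM $F$ admits an ample completion $\widetilde{F}$ of the same VC-dimension. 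The desired ample completion of $\mathcal{L}$ is then obtained by gluing $\widetilde{\mathcal{L}'}$ and $\widetilde{F}$ along their shared boundary $\partial F$, and its VC-dimension is bounded by $\max(\vcd(\widetilde{\mathcal{L}'}),\vcd(\widetilde{F}))=\vcd(\mathcal{L})$, since ampleness is preserved under such COM-gluings.

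The critical step, and the main obstacle, is to ensure that $\widetilde{\mathcal{L}'}$ and $\widetilde{F}$ can be chosen so as to restrict to the \emph{same} ample set on $\partial F$. Since ample completions of a given VC-dimension are generally non-unique and the space of such completions is poorly understood, this compatibility is non-trivial. The natural strategy is to establish an \emph{extension lemma}: any ample completion of a sub-COM (in particular of $\partial F$, viewed as a COM in its own right) extends to an ample completion of the ambient OM or COM without increasing the VC-dimension. With such a tool, one would first fix an ample completion of $\partial F$, then extend it in parallel to both $\mathcal{L}'$ (inductively) and $F$ (via a strengthened version of the paper's OM theorem), yielding completions that glue by construction.

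A complementary route is to induct on $|E|$: fix $e\in E$, decompose $\mathcal{L}$ into its positive halfspace $\mathcal{L}^+_e$, negative halfspace $\mathcal{L}^-_e$, and zero-set $\mathcal{L}^0_e$, each of which inherits a COM structure on a smaller ground set. Apply induction to each piece and reassemble along $\mathcal{L}^0_e$. The compatibility obstacle reappears along $\mathcal{L}^0_e$, but the geometry is cleaner: one is gluing two ample sets along a common ample hyperplane section, which can be phrased as a question about simultaneous liftings through the deletion-contraction short exact sequence. In either reduction, the crux is the same: Conjecture~\ref{conjectureCOM} requires a structural understanding of the \emph{space} of ample completions of an OM and of how it pulls back along sub-OMs, well beyond the bare existence statement provided by the main theorems of this paper. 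Proving the desired extension or lifting property for OM completions is, in my view, the decisive step on which the conjecture rests.
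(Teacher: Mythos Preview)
The statement you address is presented in the paper as an open \emph{conjecture}; there is no proof to compare against. The paper establishes only the special cases of OMs (Theorem~\ref{OMtoAMP}) and CUOMs (Theorem~\ref{CUOMtoAMP}) and explicitly leaves the general case open. Your write-up is therefore best read as a research plan, and you are right to flag the boundary-compatibility problem as the crux.

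That said, several steps in your outline fail concretely. The assertion that ``ampleness is preserved under such COM-gluings'' overlooks that Proposition~\ref{AMPamalgam} requires the union to already be a partial cube; the paper's own Example in Section~6 (Figure~\ref{fig:C6P3}, $C_8\square P_3$) shows that independently completing the two facets of a COM --- even to UOMs --- can yield a union that is \emph{not} a partial cube. So the obstruction is global, not local to $\partial F$, and no extension lemma that only matches along $\partial F$ will repair it; this is why the paper's discussion in Section~6 singles out a \emph{parallel faces completion property} as an additional constraint any inductive scheme must respect. Further structural claims are also unjustified: it is not clear that deleting the relative interior of a maximal face leaves a COM (the amalgam of Proposition~\ref{amalgamCOM} goes the other direction, and the overlap there is a COM, not $\partial F$); and in your $|E|$-induction the halfspaces $\mathcal{L}^{\pm}_e$ of a general COM need not have smaller VC-dimension (the drop in Lemma~\ref{lem:antipodal} uses antipodality, i.e., the OM case), so the induction does not progress. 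The paper's CUOM proof works precisely because all non-maximal faces are already cubes (Proposition~\ref{CUOM}), so mutual projections between facets require no further completion and the compatibility issue evaporates --- a feature absent for general COMs.
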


COMs of rank $2$ have the nice property that their faces are uniform OMs.
This is not longer true in higher dimensions: COMs whose faces are uniform OMs
constitute a proper subclass of COMs (that we call CUOMs). In this paper, we prove that
Conjecture \ref{conjectureCOM} holds for all tope graphs of OMs and CUOMs. This
proves that set families arising from topes of OMs and CUOMs satisfy the sample
compression conjecture. In Fig. \ref{fig:running_example_CUOM}, we present an
example of the tope graph of a CUOM of VC-dimension
$3$, which we further use as a running example.

\begin{figure}[htb]
	\centering
	\includegraphics[width=.35\textwidth]{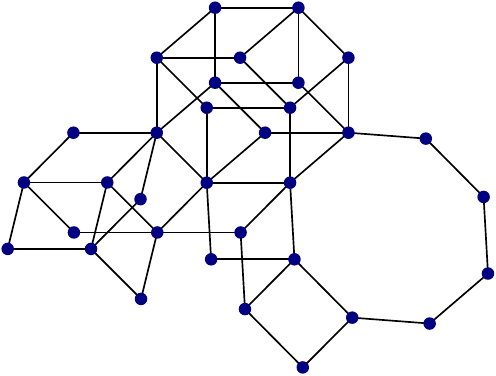}
	\caption{The tope graph $M$ of a CUOM}
	\label{fig:running_example_CUOM}
\end{figure}

%
%

\section{Preliminaries}

\subsection{VC-dimension}
Let $\SS$ be a family of subsets of an $m$-element  set $U$.  A subset $X$ of
$U$ is \emph{shattered} by $\SS$ if for all $Y\subseteq X$ there exists
$S\in\SS$ such that $S\cap X=Y$. The \emph{Vapnik-Chervonenkis dimension}
\cite{VaCh} (the \emph{VC-dimension} for short)  $\vcd(\SS)$ of $\SS$ is the
cardinality of the largest subset of $U$ shattered by $\SS$. Any set family
$\SS\subseteq 2^U$ can be viewed as a subset of vertices of the $m$-dimensional
hypercube $\Q_m=\Q(U)$. Denote by $G(\SS)$ the subgraph of $\Q_m$ induced by
the vertices of $\Q_m$ corresponding to the sets of $\SS$; $G(\SS)$ is called
the \emph{1-inclusion graph} of $\SS$; each subgraph of  $\Q_m$ is the
1-inclusion graph of a family of subsets of $U$. A subgraph $G$ of $\Q_m$ has
VC-dimension $d$ if $G$ is the 1-inclusion graph of a set family of
VC-dimension $d$. For a subgraph $G$ of $\Q_m$ we denote by $\C(G)$ the
smallest cube of $\Q_m$ containing $G$.

An {\it $X$-cube} of $\Q_m$ is the 1-inclusion graph of the set family  $\{
Y\cup X': X'\subseteq X\}$, where $Y$ is a subset of $U\setminus X$. If
$|X|=m'$, then any $X$-cube is an $m'$-dimensional subcube of $\Q_m$ and $\Q_m$
contains $2^{m-m'}$ $X$-cubes. We call any two $X$-cubes \emph{parallel cubes}.
A subset $X$ of $U$ is \emph{strongly shattered} by $\SS$ if the 1-inclusion
graph $G(\SS)$ of $\SS$ contains an $X$-cube. Denote by $\oX(\SS)$ and
$\uX(\SS)$ the families  consisting of all shattered and of all strongly
shattered sets of $\SS$, respectively. Clearly, $\uX(\SS)\subseteq \oX(\SS)$
and both $\oX(\SS)$ and $\uX(\SS)$ are closed under taking subsets, i.e.,
$\oX(\SS)$ and $\uX(\SS)$ are \emph{abstract simplicial complexes}. The
VC-dimension $\vcd(\SS)$ of $\SS$ is thus the size of a largest set shattered
by $\SS$, i.e., the dimension of the simplicial complex $\oX(\SS)$.

Two important inequalities relate a set family $\SS\subseteq 2^{U}$ with its
VC-dimension. The first one, the \emph{Sauer-Shelah lemma} \cite{Sauer,Shelah}
establishes that if $|U|=m$, then the number of sets in a set family
$\SS\subseteq 2^{U}$ with VC-dimension  $d$ is upper bounded by
$\Phi_d(m):=\sum_{i=0}^d \binom{m}{i}$. The second stronger inequality, called
the \emph{sandwich lemma} \cite{AnRoSa,BoRa,Dr,Pa}, proves that $|\SS|$ is
sandwiched between the number of strongly shattered sets and the number of
shattered sets, i.e., $\lvert\uX(\SS)\rvert \le \lvert
\SS\rvert \le \lvert \oX(\SS)\rvert$. If $d=\vcd(\SS)$ and $m=\lvert U\rvert$,
then $\oX(\SS)$ cannot contain more than $\Phi_d(m)$ simplices, thus the
sandwich lemma yields the Sauer-Shelah lemma. The set families for which the
Sauer-Shelah bounds are tight are called {\it maximum sets}  \cite{GaWe,FlWa}
and the set families for which the upper bounds in the sandwich lemma are tight
are called {\it ample, lopsided, and extremal sets} \cite{BaChDrKo,BoRa,La}.
Every maximum set system is ample, but not vice versa.

\subsection{Partial cubes}
All graphs $G=(V,E)$ in this paper  are finite, connected,
and simple. The {\it distance} $d(u,v):=d_G(u,v)$ between two vertices $u$ and
$v$ is the length of a shortest $(u,v)$-path, and the {\it interval} $I(u,v)$
between $u$ and $v$ consists of all vertices on shortest $(u,v)$-paths:
$I(u,v):=\{ x\in V: d(u,x)+d(x,v)=d(u,v)\}.$ An induced subgraph $H$ of $G$ is
{\it isometric} if the distance between any pair of vertices in $H$ is
the same as that in $G$. An induced subgraph $H$ of $G$ (or its vertex set $S$)
is called {\it convex} if it includes the interval of $G$ between
any two vertices of $H$.
A subset $S\subseteq V$ or the subgraph $H$ of $G$ induced by $S$ is called
{\it gated} (in $G$)~\cite{DrSch} if for every vertex $x$ outside $H$ there
exists a vertex $x'$ (the {\it gate} of $x$) in $H$ such that each vertex $y$
of $H$ is connected with $x$ by a shortest path passing through the gate $x'$.
It is easy to see that if $x$ has a gate in $H$, then it is unique and that
gated sets are convex.

A graph $G=(V,E)$ is {\it isometrically embeddable} into a graph $H=(W,F)$ if
there exists a mapping $\varphi : V\rightarrow W$ such that $d_H(\varphi
(u),\varphi (v))=d_G(u,v)$ for all vertices $u,v\in V$, i.e., $\varphi(G)$ is
an isometric subgraph of $H$. A graph $G$ is called a {\it partial cube} if it
admits an isometric embedding into some hypercube $\Q_m$. For an edge $uv$ of
$G$, let $W(u,v)=\{ x\in V: d(x,u)<d(x,v)\}$. For an edge $uv$, the sets
$W(u,v)$ and $W(v,u)$ are called {\it complementary halfspaces} of $G$.

\begin{theorem} \cite{Dj} \label{Djokovic}
	A graph $G$ is a partial cube if and only if $G$ is bipartite and for any
	edge $uv$the sets $W(u,v)$ and $W(v,u)$ are convex.
\end{theorem}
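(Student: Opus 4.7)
The forward direction is essentially free. If $\varphi\colon G\to \Q_m$ is an isometric embedding, then $G$ is bipartite and for each edge $e=uv$ of $G$ there is a unique coordinate $i_e$ on which $\varphi(u)$ and $\varphi(v)$ differ; the half-space $W(u,v)$ of $G$ coincides with the preimage under $\varphi$ of $\{x\in\Q_m : x_{i_e}=\varphi(u)_{i_e}\}$, which is convex in $\Q_m$. Since $\varphi$ preserves distances, hence intervals, $W(u,v)$ is convex in $G$, and $W(v,u)$ by symmetry.

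For the converse, the plan is to use the Djokovic--Winkler relation $\Theta$ on $E(G)$: for edges $e=uv$ and $f=xy$, set $e\,\Theta\,f$ iff $d(u,x)+d(v,y)\neq d(u,y)+d(v,x)$. Equivalently, after compatibly orienting $e$ and $f$, this says $x\in W(u,v)$ and $y\in W(v,u)$. The target is to show that $\Theta$ is an equivalence relation with classes $E_1,\dots,E_m$, that each $E_i$ is precisely the set of edges crossing a fixed bipartition $\{H_i^+,H_i^-\}$ of $V(G)$, and then to define the embedding $\varphi\colon V(G)\to\{0,1\}^m$ by $\varphi(v)_i = 0$ if $v\in H_i^+$ and $1$ otherwise.

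The main obstacle is transitivity of $\Theta$; reflexivity and symmetry are immediate. First I would establish a structural lemma: for each edge $e=uv$, bipartiteness forces $W(u,v)\sqcup W(v,u)=V(G)$, and convexity of both half-spaces forces the edges between them to form exactly the $\Theta$-class of $e$, since a shortest path cannot cross back and forth across the cut without leaving some interval. Granting this, if $e\,\Theta\,f$ and $f\,\Theta\,g$ but $e$ and $g$ are not $\Theta$-related, then some endpoint of $g$ lies on the wrong side of the bipartition determined by $e$; chasing a shortest path between carefully chosen endpoints of $f$ and $g$ exhibits two vertices of $W(u,v)$ whose interval meets $W(v,u)$, contradicting convexity. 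I expect this pivoting argument to be the genuinely delicate step.

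Once $\Theta$ is known to be an equivalence relation, the verification that $\varphi$ is an isometric embedding reduces to checking that $d_G(u,v)$ equals the number of $\Theta$-classes separating $u$ from $v$. The lower bound is immediate because any $(u,v)$-path must cross each separating cut at least once; the matching upper bound follows by induction on $d_G(u,v)$, using that the structural lemma prevents a shortest path from crossing any $\Theta$-class twice. The substantive content of the argument is thus packaged into the equivalence of $\Theta$, with transitivity as the single delicate step, and in every case the key leverage comes from the convexity hypothesis on half-spaces.
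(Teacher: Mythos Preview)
The paper does not give its own proof of Theorem~\ref{Djokovic}; it is quoted as a known result with a citation to~\cite{Dj}, so there is no in-paper argument to compare your proposal against. Your outline is precisely the classical Djokovi\'c route: show $\Theta$ is an equivalence relation using convexity of the half-spaces, then embed $G$ into $\Q_m$ via the $\Theta$-classes and verify isometry by counting separating classes along a shortest path.

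One remark on the outline itself. Your structural lemma (``the edges between $W(u,v)$ and $W(v,u)$ are exactly the $\Theta$-class of $e$'') already \emph{contains} the transitivity of $\Theta$: once you know that for every edge $e$ the cut defined by $e$ is crossed precisely by the edges $\Theta$-related to $e$, then $e\,\Theta\,f$ forces $W(u,v)=W(x,y)$ (after choosing orientations), and hence $f\,\Theta\,g$ gives $W(x,y)=W(p,q)$, so $e\,\Theta\,g$ follows immediately. In other words, the ``pivoting argument'' you anticipate as delicate is not a separate step; the real work is entirely in the structural lemma, specifically in showing that any edge crossing the cut $\{W(u,v),W(v,u)\}$ is $\Theta$-related to $uv$, which is where convexity of both half-spaces is used. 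Once that is in hand, both transitivity and the isometry of $\varphi$ are short consequences.
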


Djokovi\'{c}~\cite{Dj} introduced the following binary relation $\Theta$  on
the edges of $G$:  for two edges $f=uv$ and $f'=u'v'$, we set $f\Theta f'$ if
and only if $u'\in W(u,v)$ and $v'\in W(v,u)$. Under the conditions of the
theorem, $f\Theta f'$ if and only if $W(u,v)=W(u',v')$ and $W(v,u)=W(v',u')$,
i.e. $\Theta$ is an equivalence relation. Let $E_1,\ldots,E_m$ be the
equivalence classes of $\Theta$ and let $b$ be an arbitrary vertex taken as the
basepoint of $G$. For a $\Theta$-class $E_i$, let $\{ G^-_i,G^+_i\}$ be the
pair of complementary halfspaces of $G$ defined by setting $G^-_i:=G(W(u,v))$
and $G^+_i:=G(W(v,u))$ for an arbitrary edge $uv\in E_i$ such that $b\in
G^-_i$. 
The isometric embedding $\varphi$ of $G$ into the $m$-dimensional hypercube
$\Q_m$ is obtained by setting $\varphi(v):=\{ i: v\in G^+_i\}$ for any vertex
$v\in V$.

\begin{rexample} The complementary halfspaces $M^-_i$ and $M^+_i$ of
the running example $M$ defined by the $\Theta$-class $E_i$ are illustrated in
Fig. \ref{fig:running_example_CUOM_theta_class}(a).
\end{rexample}

\begin{figure}[htb]
	\centering
	\includegraphics[width=.8\textwidth]{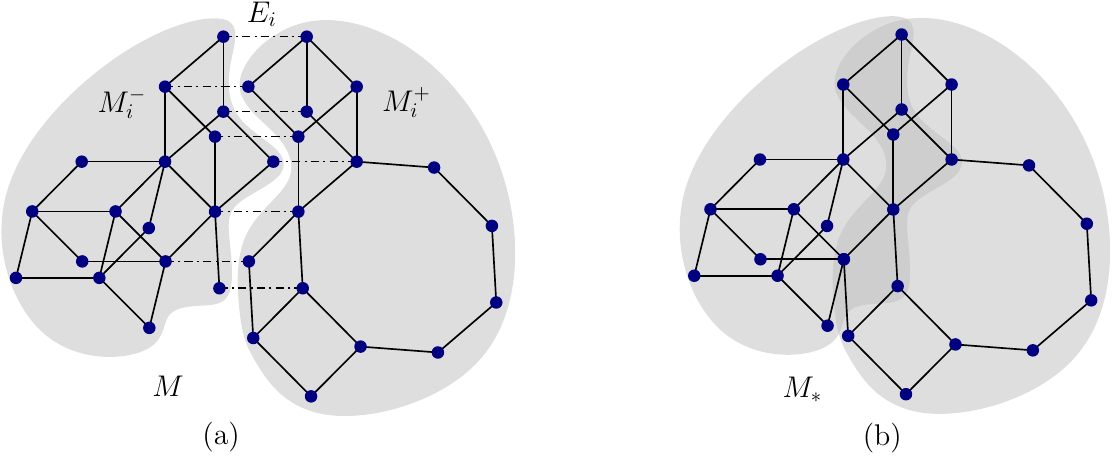}
	\caption{(a) Two complementary halfspaces of $M$. (b) The contraction $M_*$
	of a $\Theta$-class of $M$.} 
	\label{fig:running_example_CUOM_theta_class}
\end{figure}

\subsection{OMs, COMs, and AMPs}  We recall the basic notions and results from
the theory of oriented matroids (OMs), complexes of oriented matroids
(COMs), and ample set systems (AMPs). We follow \cite{BjLVStWhZi} for OMs,
\cite{BaChKn} for COMs, and \cite{BaChDrKo,La} for AMPs.

\subsubsection{OMs: oriented matroids}\label{sub:OM}
Oriented matroids (OMs) are abstractions of systems of sign vectors of all cells in the partition of ${\mathbb R}^m$ by a central arrangement of hyperplanes.
Let $U$ be a set with $m$ elements and let $\covectors$ be a {\it system of
sign vectors}, i.e., maps from $U$ to $\{-1,0,+1\}$. The elements of
$\covectors$ are referred to as \emph{covectors} and denoted by capital letters
$X, Y, Z$, etc. For $X \in \covectors$, the subset $\underline{X} = \{e\in U:
X_e\neq 0\}$ is called the \emph{support} of $X$ and  its complement
$X^0=E\setminus \underline{X}=\{e\in E: X_e=0\}$ the \emph{zero set} of $X$.
For a sign vector $X$ and a subset $A\subseteq U$, let $X_A$ be the restriction
of $X$ to $A$.  Let $\leq$ be the product ordering on $\{-1,0,+1\}^{U} $
relative to the standard ordering of signs with $0 \leq -1$ and $0 \leq +1$.
For $X,Y\in \covectors$, we call $S(X,Y)=\{e\in U: X_eY_e=-1\}$ the
\emph{separator} of $X$ and $Y$.
The \emph{composition} of $X$ and $Y$ is the sign vector $X\circ Y$, where for all $e\in U$, $(X\circ Y)_e = X_e$ if $X_e\ne 0$  and  $(X\circ Y)_e=Y_e$ if $X_e=0$.

\begin{definition} \label{def:OM}
	An \emph{oriented matroid} (OM) is a system of sign vectors
	$(U,\covectors)$ satisfying
	\begin{itemize}
		\item [{\bf (C)}] ({\sf Composition)} $X\circ Y \in \covectors$ for all
		$X,Y \in \covectors$.
		\item[{\bf (SE)}] ({\sf Strong elimination}) for each pair
		$X,Y\in\covectors$ and for each $e\in S(X,Y)$, there exists $Z \in
		\covectors$ such that $Z_e=0$ and $Z_f=(X\circ Y)_f$ for all $f\in
		U\setminus S(X,Y)$.
		\item[{\bf (Sym)}] ({\sf Symmetry}) $-\covectors=\{ -X: X\in
		\covectors\}=\covectors,$ that is, $\covectors$ is closed under sign
		reversal.
	\end{itemize}
\end{definition}

A system of sign-vectors $(U,\covectors)$ is \emph{simple} if it has no ``redundant'' elements, i.e., for each $e \in U$,
$\{X_e: X\in \covectors\}=\{-1,0,+1\}$ and for each pair $e\neq f$ 
there exist $X,Y \in \covectors$ with $\{X_eX_f,Y_eY_f\}=\{+, -\}$. We will only consider simple OMs, without explicitly stating it every time. The poset $(\covectors,\le)$ of an OM $\covectors$ together with an artificial global maximum $\hat{1}$ forms a graded lattice, called the \emph{big face lattice} $\mathcal{F}_{\mathrm{big}}(\covectors)$. The length of the maximal chains of $\mathcal{F}_{\mathrm{big}}$  minus one is called the \emph{rank} of $\covectors$ and denoted $\rk(\covectors)$. Note that $\rk(\covectors)$ equals the rank of the underlying unoriented matroid~\cite[Theorem 4.1.14]{BjLVStWhZi}. The \emph{topes} of $\covectors$ are the co-atoms of $\mathcal{F}_{\mathrm{big}}(\covectors)$.

From (C), (Sym), and (SE) it follows that the set ${\mathcal T}$ of topes of any simple OM $\covectors$ are  $\{-1,+1\}$-vectors. Thus,  ${\mathcal T}$ can be viewed as a family of subsets of $U$, where for each $T\in\mathcal{T}$ an element $e\in U$ belongs to the corresponding set if $T_e=+$ and does not belong to the set otherwise.
The \emph{tope graph} $G(\covectors)$ of an OM $\covectors$ is the 1-inclusion
graph of the set $\mathcal T$ of topes of $\covectors$. 
In \emph{realizable OMs} (i.e., of OMs arising from central hyperplane arrangements of ${\mathbb R}^m$), $X\le Y$ for two covectors $X,Y$  if and only if the cell corresponding to $X$ is contained in the cell corresponding to $Y$. Consequently, the
topes of realizable OMs are the covectors of the inclusion maximal cells
(which all have dimension $m$), called \emph{regions}. Therefore, the
tope graph of a realizable OM can be viewed as the adjacency graph of regions:
the vertices of
this graph are the regions of a hyperplane arrangement and two regions are
adjacent in this graph if they are separated by a unique hyperplane of the
arrangement. The \emph{Topological Representation Theorem of Oriented
Matroids} of
\cite{FoLa}, generalizes this correspondence to all OMs:
tope graphs of OMs can be characterized as the adjacency graphs of regions (inclusion maximal cells) of pseudo-sphere
arrangements in a sphere $S^{m-1}$~\cite{BjLVStWhZi}, where $m$ is the rank of
the OM. More precisely, two topes are adjacent if and only if the corresponding 
regions are separated by a unique pseudo-sphere, see Fig. 
\ref{fig:region_graph}.
It is also well-known (see for example~\cite{BjLVStWhZi}) that tope
graphs of OMs are partial cubes and that $\covectors$ can be recovered from its
tope graph  $G(\covectors)$ (up to isomorphism). Therefore, \emph{we can define
all terms in the language of tope graphs.} One instance of this is the correspondence between the rank of an OM and the
VC-dimension of its tope graph, see Lemma \ref{VCdimOM}. 
%

Another important axiomatization of OMs is in terms of {cocircuits}. The
\emph{cocircuits} of $\covectors$ are the minimal non-zero elements of
$\mathcal{F}_{\mathrm{big}}(\covectors)$, i.e., its atoms. The collection of
cocircuits is denoted by $\cocircuits$ and can be axiomatized
as follows:
a system of sign vectors  $(U,\cocircuits)$ is called an \emph{oriented
matroid} (OM) if  $\cocircuits$ satisfies {\bf (Sym)} and  the following two
axioms: 
\begin{itemize}
	\item [{\bf (Inc)}] ({\sf Incomparability)} $\underline{X}\subseteq\underline{Y}$
	implies $X=\pm Y$ for all $X,Y \in  \cocircuits$.
	\item[{\bf (E)}] ({\sf Elimination}) for each pair $X,Y\in\cocircuits$ with
	$X\neq -Y$ and for each $e\in S(X,Y)$, there exists $Z \in  \cocircuits$
	such that $Z_e=0$  and $Z_f\in\{0,X_f,Y_f\}$ for all $f\in U$.
\end{itemize}

The set $\covectors$ of covectors can be derived from $\cocircuits$ by taking
the closure of $\cocircuits$ under composition. The axiomatization of OMs via
cocircuits is used 
to define uniform oriented matroids.

\medskip
\begin{definition} ~\cite{BjLVStWhZi}
	A \emph{uniform oriented matroid} (UOM) of rank $r$ on a set $U$ of size
	$m$ is an OM $(U,\cocircuits)$ such that $\cocircuits$ consists of two
	opposite signings of each subset of $U$ of size $m-r+1$. 
\end{definition}

\subsubsection{COMs: complexes of oriented matroids}\label{subsec:COMs}
Complexes of oriented matroids (COMs) are abstractions of sign vectors of all cells in the partition of an open convex set $C$ of ${\mathbb R}^m$ by an arrangement of affine hyperplanes.
COMs are defined in a similar way as OMs, simply replacing the global axiom (Sym) by a weaker local axiom (FS) of face symmetry:

\begin{definition} \label{def:COM}
	A \emph{complex of oriented matroids} (COMs) is a system of sign vectors
	$(U,\covectors)$ satisfying (SE)  and the following axiom:
	\begin{itemize}
		\item[{\bf (FS)}] ({\sf Face symmetry}) $X\circ -Y \in  \covectors$
		for all $X,Y \in  \covectors$.
	\end{itemize}
\end{definition}

As for OMs, we restrict ourselves to \emph{simple} COMs, i.e., COMs defining simple systems of sign vectors. 
One can  see that (FS) implies (C), thus OMs are exactly the COMs containing the zero  vector ${\bf 0}$, see~\cite{BaChKn}.
A COM $\covectors$  is \emph{realizable} if $\covectors$ is the  system of sign vectors of all cells  in an arrangement $U$ of (oriented) affine hyperplanes
restricted to an open convex set of ${\mathbb R}^m$. 
For other examples of (tope graphs of) COMs, see~\cite{BaChKn,ChKnMa,KnMa}.

The simple twist between (Sym) and (FS) leads to a rich combinatorial and
geometric structure that is build from OMs but is quite different from OMs. Let
$(U,\covectors)$ be a COM and $X$ be a covector of $\covectors$. The
\emph{face} of $X$ is $\F(X):=\{ X\circ Y: Y\in \covectors\}$; a \emph{facet}
is a maximal proper face. From the definition, any face $\F(X)$  consists of
the sign vectors of all faces of the subcube of $[-1,+1]^{U}$ with barycenter
$X$. By \cite[Lemma 4]{BaChKn}, each face $\F(X)$ of $\covectors$ is an OM.
Since OMs are COMs, each face of an OM is an OM and the facets correspond to
cocircuits.
Furthermore, by \cite[Section 11]{BaChKn} replacing each combinatorial face
$\F(X)$ of  $\covectors$  by a PL-ball, we obtain a contractible cell complex
associated to each COM. The \emph{topes} and the \emph{tope graphs} of COMs are
defined in the same way as for OMs. Again, the topes are $\{-1,+1\}$-vectors,
the tope graph $G(\covectors)$ is a partial cube, and the COM $\covectors$ can
be recovered from its tope graph, see~\cite{BaChKn,KnMa}.

\begin{example}
	In Fig. \ref{fig:region_graph}(a) we consider an oriented pseudoline
	arrangement $U$. The set of sign vectors of the cells of this arrangement
restricted to the disk $B$ is denoted by $\covectors$. Notice that $\covectors$ is a COM. This is due
to the fact that $U$ comes from an arrangement $U'$ of pseudo-spheres of the sphere $S^2$,
which together with the boundary $\partial B$ of $B$ define an OM $\covectors'$. Then $\covectors$ is one of the  halfspaces
of $\covectors'$ defined by $\partial B$, thus $\covectors$ is a COM by \cite[Proposition 6]{BaChKn}.

In Fig. \ref{fig:region_graph}(a) we present the sign vectors $X$ and $Y$ of two cells, one included in another one.
Note that $X$ is a tope of $\covectors$ and $Y\leq X$. In Fig. 
\ref{fig:region_graph}(b), we present
the adjacency graph of the regions of the arrangement $U$ and draw in
red the covectors belonging to the face $F(Y)$ of $Y$.
In Fig. \ref{fig:region_graph}(c) the tope graph of the COM $\covectors$ is 
given. It is the adjacency graph of regions.

\end{example}

\begin{figure}[htb]
	\centering
	\includegraphics[width=.9\textwidth]{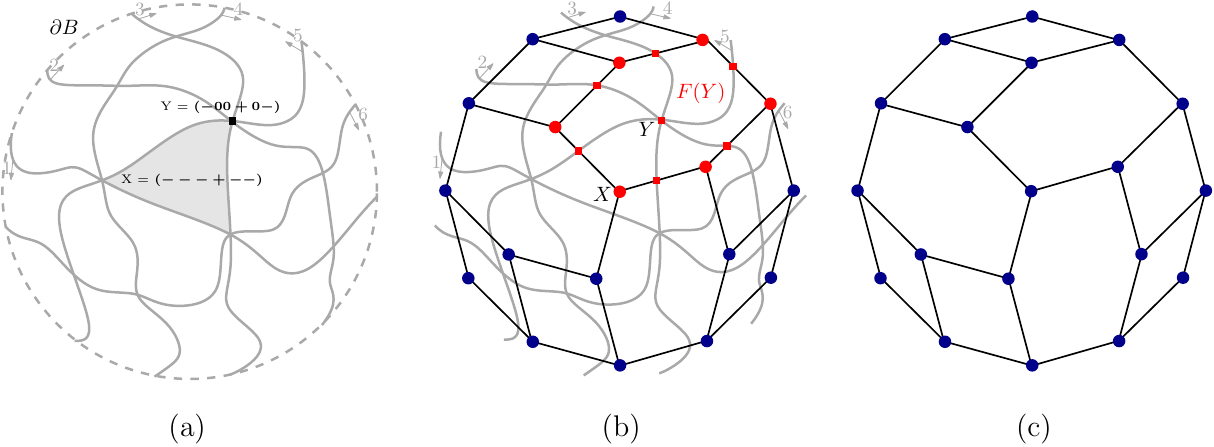}
	\caption{(a) A pseudoline arrangement $U$. (b) The adjacency graph of
	regions of $U$ and the face $F(Y)$. (c) The tope graph of COM $\covectors$.}
	\label{fig:region_graph}
\end{figure}



We continue with the definitions of AMPs and CUOMs.
For  $\covectors\subseteq \{-1,0,+1\}^U$,  let $\uparr \covectors:=\{ Y \in
\{-1,0,+1\}^{U}: X \leq Y \text{ for some } X \in  \covectors\}$.

\begin{definition} \label{def:AMP} ~\cite{BaChKn}
	An \emph{ample set system} (AMP) is a COM $(U,\covectors)$ satisfying the
	following axiom:
	\begin{itemize}
		\item[{\bf (IC)}] ({\sf Ideal composition})
		$\uparr\covectors=\covectors$.
	\end{itemize}
\end{definition}

\begin{definition} \label{def:CUOM}
	A \emph{complex of uniform oriented matroids} (CUOM) is a COM
	$(U,\covectors)$ in which each facet is a UOM.
\end{definition}

\subsubsection{AMPs: ample set systems}
Ample set systems (AMPs) are abstractions of sign vectors of all cells  in a partition of an open
convex set $C$ of ${\mathbb R}^m$ by an arrangement of coordinate hyperplanes. We
already defined ample set systems (1) as COMs $\covectors$ such
that $\uparr\covectors=\covectors$  and (2) as families of sets $\SS$ for which
the upper bounds in the sandwich lemma are tight: $\lvert \SS\rvert=\lvert
\oX(\SS)\rvert$. The set system arising in the second definition is  the set of
topes of the system of sign vectors from the first definition. As in the case of
OMs and COMs, \emph{we can consider AMPs as set systems or as partial cubes.}
By \cite{BaChDrKo,BoRa}, $\SS$ is ample if and only if
$\lvert\uX(\SS)\rvert=\lvert \SS\rvert$ and if and only if
$\uX(\SS)=\oX(\SS)$. This can be rephrased in the following combinatorial way:
$\SS\subseteq 2^U$ is \emph{ample} if and only if each set shattered by $\SS$
is strongly shattered.
Consequently, the VC-dimension of an ample set system is the dimension of the
largest cube in its 1-inclusion graph. A nice characterization of ample set
systems was provided in  \cite{La}: $\SS$ is ample if and only if for any cube
$\Q$ of $\Q_m$ if $\Q\cap \SS$ is closed under taking antipodes, then either
$\Q\cap \SS=\varnothing$ or $\Q$ is included in $G(\SS)$.  The paper
\cite{BaChDrKo} provides metric and recursive characterizations of ample
set systems, such as the following one. 

Recall that any two $X$-cubes $Q',Q''$ of $\SS$ are called \emph{parallel
cubes}. The distance $d(Q',Q'')$ is the distance between mutually closest
vertices of $Q'$ and $Q''$. A \emph{gallery of length $k$} between $Q'$ and
$Q''$ is a sequence of $X$-cubes $(Q'=R_0,R_1\ldots,R_{k-1},R_k=Q'')$ of $\SS$
such that $R_{i-1}\cup R_i$ is a cube for every $i=1,\ldots,k$. A
\emph{geodesic gallery} is a gallery of length $d(Q',Q'')$.

\begin{proposition} \label{ample-gallery} \cite{BaChDrKo}
	$\SS$ is ample if and only if any two parallel cubes of $\SS$ can be
	connected in $\SS$ by a geodesic gallery.
\end{proposition}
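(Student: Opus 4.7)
The plan is to prove each direction separately: for ``$\SS$ ample $\Rightarrow$ gallery'' I would lift the problem to a shortest-path question in an auxiliary ample set, while for the converse I would verify Lawrence's cube characterization of ampleness via a maximal sub-cube argument. The main obstacle will be the forward direction, and specifically the claim that the auxiliary ``position set'' introduced below is itself ample.

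For the forward direction, fix parallel $X$-cubes $Q',Q''$ of $\SS$ at positions $\alpha',\alpha''\in\{\pm 1\}^{U\setminus X}$ and let $Y:=\{e:\alpha'_e\neq\alpha''_e\}$, so $d(Q',Q'')=|Y|$. Introduce the ``position set''
\[
A:=\bigl\{\gamma\in\{\pm 1\}^{U\setminus X}:Q_\gamma\subseteq\SS\bigr\},
\]
where $Q_\gamma$ denotes the $X$-cube at position $\gamma$; clearly $\alpha',\alpha''\in A$. The core step is to show that $A$ is ample on $U\setminus X$. Given $Z\subseteq U\setminus X$ shattered by $A$, for each $\delta\in\{\pm 1\}^Z$ pick $\gamma_\delta\in A$ with $\gamma_\delta|_Z=\delta$; since $Q_{\gamma_\delta}\subseteq\SS$, every pattern in $\{\pm 1\}^{X\cup Z}$ is realised by some tope of $\SS$, so $X\cup Z$ is shattered by $\SS$. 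Ampleness of $\SS$ then produces an $(X\cup Z)$-cube in $\SS$ at some position $\mu\in\{\pm 1\}^{U\setminus(X\cup Z)}$; slicing this cube in the $X$-direction shows $(\delta,\mu)\in A$ for every $\delta$, so the $Z$-cube at position $\mu$ lies in $A$ and $Z$ is strongly shattered by $A$. Hence $A$ is ample and therefore a partial cube; since $\alpha',\alpha''\in A$ are at Hamming distance $|Y|$ in $\{\pm 1\}^{U\setminus X}$, a shortest path $\alpha'=\gamma_0,\gamma_1,\ldots,\gamma_{|Y|}=\alpha''$ exists in $A$. Lifting it gives $X$-cubes $Q_{\gamma_i}\subseteq\SS$ whose consecutive unions $Q_{\gamma_{i-1}}\cup Q_{\gamma_i}$ are $(X\cup\{e_i\})$-cubes of $\SS$, i.e., the desired geodesic gallery.

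For the converse, I would assume the gallery property. Applied to single vertices it already implies $\SS$ is a partial cube. To verify Lawrence's characterization, take a cube $Q$ with $\SS\cap Q$ non-empty and closed under $Q$-antipodes, and suppose for contradiction $Q\not\subseteq\SS$. Pick $C\subseteq\SS\cap Q$ of maximal dimension $k<\dim Q=:d$ with support $X$; antipodal closure of $\SS\cap Q$ forces the $Q$-antipode $\bar C$ into $\SS$, so that $C,\bar C$ are parallel $X$-cubes at distance $d-k\geq 1$. The hypothesis yields a geodesic gallery $C=C_0,C_1,\ldots,C_{d-k}=\bar C$ of $X$-cubes in $\SS$; because its moves flip only coordinates in $Q\setminus X$ while the $U\setminus Q$-coordinates remain equal to those of $C$, every $C_i$ sits inside $Q$. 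But then $C_0\cup C_1\subseteq\SS\cap Q$ is a sub-cube of dimension $k+1$, contradicting the maximality of $C$. Thus $Q\subseteq\SS$, so $\SS$ is ample.
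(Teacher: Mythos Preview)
The paper does not prove this proposition; it merely cites it from \cite{BaChDrKo}. So there is no in-paper proof to compare against, and your task reduces to whether your argument stands on its own. It does, with one caveat worth flagging.

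Your forward direction is clean: the key observation that the ``position set'' $A=\{\gamma: Q_\gamma\subseteq\SS\}$ is itself ample (because any $Z$ shattered by $A$ forces $X\cup Z$ to be shattered, hence strongly shattered, by $\SS$) is exactly the right idea, and the lift of a shortest path in $A$ to a geodesic gallery in $\SS$ is immediate. The only dependency to watch is your line ``$A$ is ample and therefore a partial cube.'' In this paper, the sentence ``the 1-inclusion graph of an ample set is a partial cube'' appears \emph{after} Proposition~\ref{ample-gallery} and is introduced by ``Thus'', suggesting it is being derived from the gallery characterization. To avoid apparent circularity you should either cite an independent proof that ample sets are isometrically embedded (this is in \cite{BaChDrKo} and can be shown directly by induction on $|U|$ via the fact that halfspaces of ample sets are ample), or bypass it by arguing inductively on $|Y|$: restrict $A$ to the interval $I(\alpha',\alpha'')$, note this restriction is still ample and contains both endpoints, and use that a nonempty ample set on a ground set of size $|Y|\ge 1$ strongly shatters some singleton, giving an edge from $\alpha'$ into the interval.

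Your converse is correct as written. The reduction to Lawrence's antipodal-cube characterization is efficient, and the crucial step---that a geodesic gallery from $C$ to its $Q$-antipode $\bar C$ can only flip coordinates in the support of $Q$, hence stays inside $Q$---is exactly what makes the maximality argument go through.
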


Thus, the 1-inclusion graph $G(\SS)$ of an ample set $\SS$ is a partial cube and
we will speak about \emph{ample partial cubes}. We conclude with the definition
of ample completions.

\begin{definition} \label{def:amp-completion}
	An \emph{ample completion} of a subgraph $G$ of VC-dimension $d$ of $\Q_m$
	is an ample partial cube $\ac(G)$ containing $G$ as a subgraph and such
	that $\vcd(\ac(G))=d$.
\end{definition}


\subsection{Sample compression schemes}
\label{subsect_sample_compression_schemes}
The language of sign vectors is perfectly suited for defining  
sample compression schemes. This reformulation is due to \cite{ChChMInRaVa}, which we closely 
follow (for classical formulations, see \cite{LiWa,MoWa,MoYe}). A \emph{concept class} is any family $\mathcal C$ of subsets 
of a finite set $U$; the elements of $\mathcal C$ are called \emph{concepts}. As we noticed above, $\mathcal C$ can be viewed as a subset 
of $\{ -1,+1\}^U$. For a concept $C\in {\mathcal C}$, any covector $X\in \{ -1,0,+1\}^U$ such that  $X\le C$ is called a 
\emph{sample realizable by} $C$. Let $\downarrow {\mathcal C}$ denote the set of all samples realizable by concepts of $\mathcal C$, i.e., 
$\downarrow {\mathcal C}=\{ X\in \{ -1,0,+1\}^U: \exists C\in {\mathcal C} \mbox{ such that } X\le C\}$. 

A \emph{labeled sample compression scheme} for a concept class $\mathcal C$ is 
best
viewed as a protocol between a \emph{compressor} and a
\emph{reconstructor}.  The compressor gets a realizable sample $X$
from which it picks a small \emph{subsample} $X' \le X$.
The compressor sends $X'$ to the reconstructor.  Based on $X'$, the reconstructor outputs a
set $C\in \{ -1,+1\}^U$ (not necessarily belonging to $\mathcal C$) that needs 
to be \emph{consistent} with the entire 
sample $X$, i.e., $X \le C$.  An \emph{unlabeled sample compression scheme} is a
sample compression scheme in which the compressed subsample $X'$ is
unlabeled. So, the compressor removes the labels before sending the
subsample to the reconstructor. In both the labeled and unlabeled settings, the 
goal is to minimize the size of 
the support of the compressed subsample $X'$ with $X$ running over $\downarrow {\mathcal C}$.  

Formally, a \emph{labeled sample compression scheme}  of size $k$ for a concept class
${\mathcal C} \subseteq \{-1,+1\}^U$ is defined by a compressor function
$\alpha: \{ -1,0,+1\}^U \to \{ -1,0,+1\}^U$ and
a reconstructor function
$\beta: \{ -1,0,+1\}^U  \to \{ -1, +1\}^U$ such that for any
$X\in \downarrow\mathcal C$, it holds:
$$\alpha(X)\le X\le \beta(\alpha(X)) \mbox{ and } |\underline{\alpha}(X)|\le k,$$
where $\le$ is the order between sign vectors and $\underline{\alpha}(X)$ is 
the support of the sign vector $\alpha(X)$ as defined above. The condition 
$X\le \beta(\alpha(X))$ means that 
the restriction of $\beta(\alpha(X))$ on the support of $X$ coincides with the input sample $X$. In particular, if $X\in\mathcal{C}$, then $\beta(\alpha(X))=X$.
The \emph{unlabeled sample compression schemes} are defined analogously, with the difference 
that in the unlabeled case $\alpha(X)$ is a subset of size at most $k$ of the support of $X$. 

From the definition it follows that if ${\mathcal C}$ is a completion of a concept class 
${\mathcal C}'$ and $(\alpha,\beta)$ is a labeled, respectively unlabeled, 
sample compression scheme for 
$\mathcal C$, then $(\alpha,\beta)$ is a labeled, respectively unlabeled, 
sample compression scheme for ${\mathcal C}'$. 
(This conclusion is no longer true if the reconstructor map $\beta$ takes values in 
$\mathcal C$ and not in $\{ -1,+1\}^U$.) 

The sample compression conjecture of \cite{FlWa} states that {\it any set 
family 
of VC-dimension $d$ has a sample compression scheme of size $O(d)$}.

\section{Auxiliary results}
In this section we recall or prove some auxiliary results used in the proofs of the main results.

\subsection{Partial cubes and VC-dimension}
In this subsection, we closely follow \cite{ChKnMa} and \cite{ChKnPh}. 
Let $G$ be a partial cube, isometrically embedded in the hypercube $\Q_m$.

\subsubsection{pc-Minors and VC-dimension}
For a $\Theta$-class $E_i$ of a partial cube $G$,  an {\it elementary
restriction} consists of taking one of the halfspaces $G^-_i$ and $G^+_i$. More
generally, a {\it restriction} is a convex subgraph of $G$ induced by the
intersection of a set of halfspaces of $G$. 
Since any convex subgraph of a partial cube $G$ is the intersection of
halfspaces \cite{AlKn,Ch_thesis}, the restrictions of $G$ coincide with the
convex subgraphs of $G$. For a $\Theta$-class $E_i$, the graph  $\pi_i(G)$
obtained from $G$ by contracting the edges of $E_i$ is called an ($i$-){\it
contraction} of $G$. For a vertex $v$ of $G$, let $\pi_i(v)$ be the image of
$v$ under the $i$-contraction. We will apply $\pi_i$ to subsets $S\subseteq V$,
by setting $\pi_i(S):=\{\pi_i(v): v\in S\}$.
In particular, we denote the $i$-{\it contraction} of $G$ by $\pi_i(G)$.

\begin{rexample} The $i$-contraction $M_*=\pi_i(M)$  of the running example $M$ is
given in Fig. \ref{fig:running_example_CUOM_theta_class}(b); $M_*$ is a CUOM and thus is a
partial cube.
\end{rexample}

By~\cite[Theorem 3]{Ch_hamming}, $\pi_i(G)$ is an isometric subgraph of
$\Q_{m-1}$, thus the class of partial cubes is closed under contractions. Since
edge contractions in graphs commute, if $E_i,E_j$ are two distinct
$\Theta$-classes, then $\pi_j(\pi_i(G))=\pi_i(\pi_j(G))$. Consequently, for a
set $A$ of $\Theta$-classes, we can denote by $\pi_A(G)$ the isometric subgraph
of $\Q_{m-|A|}$ obtained from $G$ by contracting the equivalence classes of
edges from $A$. Contractions and restrictions commute in partial cubes: any set
of restrictions and contractions of a partial cube $G$ provide the same result,
independently on the order they are performed in. The resulting partial cube is
called a {\it partial cube minor} (or {\it pc-minor}) of $G$. For a partial
cube $H$, let ${\mathcal F}(H)$ denote  the class of all partial cubes
not having $H$ as a pc-minor. For partial cubes, 
the VC-dimension can be formulated in terms of pc-minors:

\begin{lemma} \label{VCdim_d} \cite[Lemma 2]{ChKnPh}
	A partial cube $G$ has VC-dimension $\le d$ if and only if $G\in {\mathcal
	F}(\Q_{d+1})$.
\end{lemma}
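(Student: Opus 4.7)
The plan is to prove both implications by tracking the VC-dimension through the two pc-minor operations, contraction and restriction. The pivotal fact is that $\vcd(\Q_{d+1}) = d+1$, witnessed by the whole coordinate set.

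For the direction $\vcd(G) \le d \Rightarrow G \in \mathcal{F}(\Q_{d+1})$, I would argue by contrapositive, showing that both pc-minor operations are monotone on VC-dimension. Restriction to a halfspace simply discards vertices, so the associated set family shrinks and its VC-dimension is non-increasing. For a contraction $\pi_i(G)$, I would lift shattering: if $Y \subseteq U \setminus \{i\}$ is shattered by $\pi_i(G)$, then for each $Y' \subseteq Y$ there is some $w \in \pi_i(G)$ with $w \cap Y = Y'$, and any preimage $v \in G$ of $w$ satisfies $v \cap Y = Y'$, so $Y$ is already shattered by $G$. Iterating along an arbitrary pc-minor sequence yields $\vcd(H) \le \vcd(G)$ for every pc-minor $H$ of $G$, ruling out $\Q_{d+1}$ as a pc-minor when $\vcd(G) \le d$.

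For the converse, I would exhibit $\Q_{d+1}$ as a pc-minor directly. Pick a shattered subset $Y \subseteq U$ with $|Y|=d+1$ and contract every $\Theta$-class $E_j$ with $j \notin Y$. By the commutativity of contractions and the preservation of partial cubes under contractions recalled in the paper, the resulting graph $H := \pi_{U \setminus Y}(G)$ is an isometric subgraph of $\Q(Y) \cong \Q_{d+1}$. Shatteredness of $Y$ provides, for each $Y' \subseteq Y$, a vertex of $G$ whose image under the contractions is exactly $Y'$; so $H$ contains all $2^{d+1}$ vertices of $\Q(Y)$. Since an isometric subgraph of a hypercube on the full vertex set must also realize every distance-$1$ pair as an edge, $H$ coincides with $\Q_{d+1}$, exhibiting the required pc-minor.

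The proof has no truly hard step; the only piece requiring a careful hand is the contraction-monotonicity step, where one must verify that the trace on $Y$ is preserved under the projection-preimage correspondence, together with the small but crucial remark at the end of the backward direction that an isometric subgraph of $\Q_{d+1}$ which is surjective on vertices is automatically the whole $\Q_{d+1}$.
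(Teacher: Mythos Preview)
Your proposal is correct. The paper does not supply its own proof of this lemma; it is quoted verbatim from \cite[Lemma~1]{ChKnPh}. Your argument is the standard one and matches the proof given there: monotonicity of the VC-dimension under restrictions and contractions gives the forward implication, and contracting all $\Theta$-classes outside a shattered $(d{+}1)$-set produces $\Q_{d+1}$ as a pc-minor for the converse.
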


\begin{rexample}
The rhombododecahedron $D$ from Fig. \ref{fig:RDinQ4} is a tope graph of a UOM
and is a
facet of the running example $M$. In Fig. \ref{fig:RDinQ4}, $D$ is isometrically
embedded in the 4-cube $Q_4$. Since $D$ is a proper subgraph of $Q_4$, $\vcd(D)<4$. On the other hand, contracting any $\Theta$-class of
$D$, we obtain the 3-cube $Q_3$. In Fig. \ref{fig:RDinQ4}, we represent the
contraction of the $\Theta$-class $E_i$ constituted by oblique edges. The
vertices that are merged by this contraction are darker.
Thus $Q_3$ is a pc-minor of $D$, establishing that $\vcd(D)=3$.
The 8-cycle $C_8$ is another face of $M$. Analogously to $D$ one can show that $\vcd(C_8)=2$: the contraction of any two of its $\Theta$-classes
results in the square $Q_2$ and the contraction of any single $\Theta$-class does not yield $Q_3$.
\end{rexample}

\begin{figure}[htb]
	\centering
	\includegraphics[width=.55\textwidth]{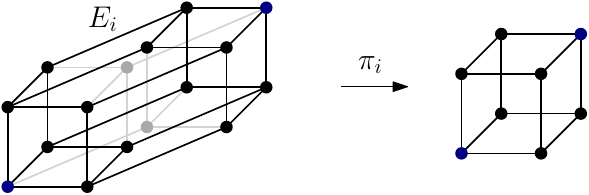}
	\caption{The rhombododecahedron $D$ isometrically embedded in $Q_4$ and
	its resulting pc-minor $\pi_i(D)$ after contracting the $\Theta$-class
	$E_i$ represented by the oblique edges.}
	\label{fig:RDinQ4}
\end{figure}

An \emph{antipode} of a vertex $v$ in a partial cube $G$ is the (necessarily unique) vertex $-v$ such that $G=I(v,-v)$.
A partial cube $G$ is \emph{antipodal} if all its vertices have antipodes. For
a subgraph $H$ of an antipodal partial cube $G$ we  denote by $-H$ the set of
antipodes of $H$ in $G$. We will use several times the following two results:

\begin{lemma}\label{lem:antipodal} \cite[Lemma 16]{ChKnPh}
	If $G$ is a  proper convex subgraph of an antipodal partial cube
	$H\in\mathcal{F}(\Q_{d+1})$, then $G\in\mathcal{F}(\Q_{d})$.
\end{lemma}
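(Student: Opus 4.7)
The plan is to prove the contrapositive: if $G\notin\mathcal{F}(\Q_d)$, then $H\notin\mathcal{F}(\Q_{d+1})$, contradicting the hypothesis on $H$. By Lemma \ref{VCdim_d}, $G\notin\mathcal{F}(\Q_d)$ is equivalent to saying that the set family associated to $G$ (via its isometric embedding into $\Q_m$) shatters some $d$-element set $X\subseteq U$. The goal therefore reduces to exhibiting a $(d+1)$-element subset of $U$ shattered by $H$, which by Lemma \ref{VCdim_d} applied to $H$ will yield $\Q_{d+1}$ as a pc-minor of $H$.

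First, I would use the properness and convexity of $G$ inside $H$ to extract a ``separating coordinate''. Since convex subgraphs of partial cubes are intersections of halfspaces (a consequence of Theorem \ref{Djokovic}), there exists a $\Theta$-class $E_{i_0}$ of $H$ such that $G\subseteq H^-_{i_0}$ while $H^+_{i_0}$ is nonempty. Note that every vertex of $G$ then has the same $i_0$-coordinate in the embedding, so automatically $i_0\notin X$.

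Next, I would combine the $d$-shattering inside $G$ with the antipodality of $H$ to upgrade it to $(d+1)$-shattering in $H$. For each sign pattern $\epsilon\in\{\pm 1\}^X$, choose a vertex $v_\epsilon\in G$ realizing $\epsilon$ on $X$; because $v_\epsilon\in H^-_{i_0}$, its $i_0$-coordinate is $-$. Since $H$ is antipodal, the antipode $-v_\epsilon\in V(H)$ exists, has pattern $-\epsilon$ on $X$, and lies in $H^+_{i_0}$. To realize an arbitrary pattern $(\epsilon,\sigma)\in\{\pm 1\}^{X\cup\{i_0\}}$ in $H$, I would use $v_\epsilon$ when $\sigma=-$ and $-v_{-\epsilon}$ when $\sigma=+$; in the latter case the $X$-pattern is $-(-\epsilon)=\epsilon$ and the $i_0$-sign is $+$. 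Thus $X\cup\{i_0\}$, a $(d+1)$-element set, is shattered by $H$, giving the desired contradiction.

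The proof is quite short once the right reformulation (contrapositive via Lemma \ref{VCdim_d}) is adopted; there is no serious technical obstacle. The one genuinely creative step is recognizing that the antipodal involution ``doubles'' the shattered family: shattering $X$ inside a single halfspace $H^-_{i_0}$ automatically transfers, via the sign-flip on $X$, to shattering $X$ inside $H^+_{i_0}$, and together these realize every sign pattern on the augmented coordinate set $X\cup\{i_0\}$. The only subtlety worth checking is that $i_0\notin X$, which follows at once from $G\subseteq H^-_{i_0}$.
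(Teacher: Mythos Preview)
The paper does not give its own proof of this lemma; it merely cites \cite[Lemma 7]{ChKnPh}. Your argument is correct and is essentially the natural proof: use convexity to place $G$ inside a halfspace $H^-_{i_0}$, then use antipodality to reflect the $d$-shattering witnesses into $H^+_{i_0}$, thereby shattering $X\cup\{i_0\}$ in $H$. All the small checks (that $i_0\notin X$, that Lemma~\ref{VCdim_d} applies to the convex subgraph $G$ since convex subgraphs of partial cubes are themselves partial cubes) are in order. There is nothing to add.
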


\begin{lemma}\label{lem:antipodal_clos_pi}\cite{KnMa}
	Antipodal partial cubes are closed under contractions.
\end{lemma}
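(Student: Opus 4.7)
The plan is to show that if $G$ is an antipodal partial cube and $E_i$ is any $\Theta$-class of $G$, then for each vertex $v \in G$, the contraction $\pi_i(-v)$ serves as an antipode of $\pi_i(v)$ in $\pi_i(G)$, where $-v$ denotes the antipode of $v$ in $G$. Since every vertex of $\pi_i(G)$ has the form $\pi_i(v)$ for some $v \in G$, this will establish antipodality.

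First, I would record the standard distance formula for contractions in partial cubes: for any $x,y \in G$,
\[
d_{\pi_i(G)}(\pi_i(x),\pi_i(y)) \;=\; d_G(x,y) - \varepsilon(x,y),
\]
where $\varepsilon(x,y)=1$ if the edge $E_i$ separates $x$ from $y$ (i.e.\ they lie in different halfspaces $G_i^-,G_i^+$) and $\varepsilon(x,y)=0$ otherwise. This follows from Theorem~\ref{Djokovic} together with the fact that any shortest $(x,y)$-path uses each $\Theta$-class at most once.

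Next, I would exploit antipodality: since $G=I(v,-v)$, every $\Theta$-class must separate $v$ from $-v$ (otherwise vertices on the ``wrong'' side of that class could not lie on any shortest $(v,-v)$-path). In particular $v$ and $-v$ lie on opposite sides of $E_i$, and hence $d_{\pi_i(G)}(\pi_i(v),\pi_i(-v)) = d_G(v,-v)-1$. For any $w \in G$, I would then verify $\pi_i(w) \in I(\pi_i(v),\pi_i(-v))$ by a short case analysis on whether $w$ lies in $G_i^-$ or $G_i^+$: in each case exactly one of the pairs $(v,w)$, $(w,-v)$ is separated by $E_i$, so
\[
d_{\pi_i(G)}(\pi_i(v),\pi_i(w)) + d_{\pi_i(G)}(\pi_i(w),\pi_i(-v)) = d_G(v,w)+d_G(w,-v)-1 = d_G(v,-v)-1,
\]
matching $d_{\pi_i(G)}(\pi_i(v),\pi_i(-v))$. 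This shows $\pi_i(G)\subseteq I(\pi_i(v),\pi_i(-v))$, and the reverse inclusion is immediate, so $\pi_i(v)$ has antipode $\pi_i(-v)$.

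The only subtlety, and the closest thing to an obstacle, is handling the degenerate possibility $\pi_i(v)=\pi_i(-v)$; this can only occur when $v$ and $-v$ are joined by an edge of $E_i$, in which case $d_G(v,-v)=1$ forces $G$ to be a single edge and $\pi_i(G)$ a single vertex, which is trivially antipodal. Everything else is a routine verification based on the contraction distance formula.
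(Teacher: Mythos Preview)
Your argument is correct. The paper itself does not give a proof of this lemma; it simply cites \cite{KnMa}, so there is no ``paper's own proof'' to compare against. Your approach via the contraction distance formula $d_{\pi_i(G)}(\pi_i(x),\pi_i(y))=d_G(x,y)-\varepsilon(x,y)$ is the natural one and all steps go through: every $\Theta$-class separates an antipodal pair, exactly one of $\varepsilon(v,w),\varepsilon(w,-v)$ equals $1$ for each $w$, and the interval computation follows. The degenerate case is handled correctly. You might additionally cite Lemma~\ref{path-theta} (rather than only Theorem~\ref{Djokovic}) when justifying the distance formula, since that is where the paper records that shortest paths use each $\Theta$-class at most once.
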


The following lemma (a direct consequence of Theorem \ref{Djokovic})
is used in Proposition \ref{AMPamalgam}:

\begin{lemma} \label{convex-intervals}
	Intervals of partial cubes are convex.
\end{lemma}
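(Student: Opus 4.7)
The plan is to realize any interval of $G$ as an intersection of halfspaces and then appeal to Theorem \ref{Djokovic}. The key identity I would establish is that, for any $u,v\in V(G)$ with $G$ isometrically embedded via $\varphi$ into $\Q_m$, writing $A=A(u,v)$ for the set of indices of those $\Theta$-classes $E_i$ that do \emph{not} separate $u$ from $v$, and $H_i$ for the halfspace in $\{G^-_i,G^+_i\}$ containing both $u$ and $v$, one has
\[
 I(u,v)\;=\;\bigcap_{i\in A} H_i.
\]

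First I would prove the inclusion $\subseteq$: any shortest $(u,v)$-path crosses each $\Theta$-class at most once (since $G$ is an isometric subgraph of $\Q_m$ and $d_G$ equals Hamming distance), hence it never crosses $E_i$ for $i\in A$, so every vertex it visits lies in $H_i$. For the reverse inclusion $\supseteq$, I would argue by a direct distance count in $\Q_m$: if $x\in V(G)$ lies in every $H_i$ with $i\in A$, then $\varphi(x)$ agrees with $\varphi(u)$ and $\varphi(v)$ on every coordinate of $A$, so it may differ from them only on coordinates outside $A$; for each such coordinate $i$ we have $u_i\neq v_i$, so exactly one of the equalities $x_i=u_i$ or $x_i=v_i$ fails, and that coordinate contributes to exactly one of $d_{\Q_m}(u,x)$ or $d_{\Q_m}(x,v)$. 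Summing gives $d_{\Q_m}(u,x)+d_{\Q_m}(x,v)=|\{1,\dots,m\}\setminus A|=d_{\Q_m}(u,v)$, and this transfers to $G$ by isometry of $\varphi$, placing $x\in I(u,v)$.

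With the identity in hand, convexity is immediate: by Theorem \ref{Djokovic} each $H_i$ is a convex subgraph of $G$, and convex subgraphs are trivially closed under arbitrary intersection (if each $S_\alpha$ contains $I(x,y)$ whenever it contains $x$ and $y$, so does $\bigcap_\alpha S_\alpha$). Hence $I(u,v)=\bigcap_{i\in A}H_i$ is convex.

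The only step with any real content is the reverse inclusion in the displayed identity, since it is the one place where the isometric embedding is genuinely used; the other two steps are bookkeeping about $\Theta$-classes and a one-line consequence of Djokovi\'c's theorem. I do not anticipate any substantive obstacle.
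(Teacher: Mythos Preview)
Your argument is correct: expressing $I(u,v)$ as the intersection of the halfspaces $H_i$ over the $\Theta$-classes not separating $u$ and $v$, and then invoking Theorem~\ref{Djokovic} for their convexity, is precisely the standard way to see this lemma as (in the paper's words) ``a direct consequence of Theorem~\ref{Djokovic}''. The paper gives no further proof, so your write-up is a faithful and complete elaboration of the intended one-line justification.
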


\subsubsection{Shattering via Cartesian products}
Recall that the \emph{Cartesian product} of $m$ graphs
$G_1,\ldots,G_m$ is the graph
$G=G_1\product\ldots\product G_m$ whose vertex-set consists of all $m$-tuples $(v_1,\ldots,v_m)$ with $v_i\in V(G_i)$ and two
$m$-tuples $u=(u_1,\ldots,u_m)$ and $v=(v_1,\ldots,v_m)$ are adjacent in $G$ if
and only if there exists an index $1\le i\le m$ such that $u_i$ is adjacent to
$v_i$ in $G_i$ and
$u_j=v_j$ for all $j\ne i$.
The $m$-cube $\Q_m=\Q(U)$ is the Cartesian product of $m$ copies of $K_2$,
i.e., $\Q_m=K_2\product \cdots \product K_2$. 
For a subset $X\subseteq U$ of size $d$, denote by $\Q(X)$ the Cartesian
product of the factors of $\Q(U)$ indexed by the elements of $X$; clearly,
$\Q(X)$ is a $d$-cube $\Q_d$. Analogously, let $\Q(U\setminus X)$ be the
$(m-d)$-cube defined by $U\setminus X$. Then $\Q(U)=\Q(X)\product \Q(U\setminus
X)$. For $Y\subseteq U$, let $\Q_Y$ be the subgraph of $\Q(U)$ induced by the
sets $\{ Y\cup Z: Z\subseteq U\setminus Y\}$; each $\Q_Y$ is isomorphic to the
cube $\Q(U\setminus X)=\Q_{m-d}$. Let $G$ be an isometric subgraph of $\Q(U)$.
We also denote by $G_Y$ the intersection of $G$ with the cube $\Q(Y)$ and we
call $G_Y$ the \emph{$Y$-fiber} of $G$. Since each $\Q_Y$ is a convex subgraph
of $\Q(U)$ and $G$ is an isometric subgraph of $\Q(U)$, each fiber $F_Y$ is
either empty or a non-empty convex subgraph of $G$. Then the definition of
shattering can be rephrased in the following way:

\begin{lemma} \label{shattering-fibers}
	A subset $X$ of $U$ is shattered by an isometric subgraph $G$ of $\Q(U)$ if
	and only if  each fiber $G_Y$, $Y\subseteq X$, is a nonempty convex
	subgraph of $G$.
\end{lemma}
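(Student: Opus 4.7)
The plan is to recognize that this lemma is essentially a translation between the definition of shattering and the language of fibers, so the proof should be short, with the only real content being a convexity observation.

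First, I would set up the correspondence between sets and fibers. Viewing each vertex $v \in V(G)$ as a subset $S_v \subseteq U$, the trace $S_v \cap X$ equals $Y \subseteq X$ precisely when $v$ lies in the subcube $\Q_Y$, i.e., when $v \in G_Y$. Hence the assertion ``for every $Y \subseteq X$ there exists $v \in G$ with $S_v \cap X = Y$'' unravels to ``$G_Y \neq \varnothing$ for every $Y \subseteq X$,'' which is exactly the shattering half of the equivalence.

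Next, I would verify that each nonempty fiber is in fact a convex subgraph of $G$. For this, I would use that the subcube $\Q_Y$ is convex in $\Q(U)$ (it is defined by fixing the $X$-coordinates according to $Y$), together with the fact that $G$ embeds isometrically in $\Q(U)$. Given any $u,v \in G_Y$, isometry gives $I_G(u,v) = I_{\Q(U)}(u,v) \cap V(G)$, while convexity of $\Q_Y$ in $\Q(U)$ forces $I_{\Q(U)}(u,v) \subseteq \Q_Y$. Intersecting the two inclusions yields $I_G(u,v) \subseteq G_Y$, as required.

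No serious obstacle arises: the shattering versus fiber-nonemptiness half is a direct unraveling of definitions, and the convexity of fibers follows from the standard interaction between convex subcubes and isometric subgraphs. The statement is really just a convenient reformulation of shattering that will be reused later to translate VC-dimension arguments into statements about convex subgraphs of $G$.
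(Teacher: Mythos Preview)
Your proposal is correct and matches the paper's own treatment: the paper does not give a formal proof of this lemma but justifies it inline just before the statement, observing that each $\Q_Y$ is a convex subgraph of $\Q(U)$ and hence, since $G$ is isometric in $\Q(U)$, each fiber is either empty or convex in $G$, with the nonemptiness clause being the literal definition of shattering. Your unpacking of both halves is exactly this argument, just written out in slightly more detail.
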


If $X\subseteq U$ is shattered by $G$, we call the map $\psi: V(G)\rightarrow
2^X=V(\Q(X))$ such that $\psi^{-1}(Y)=G_Y$ for all $Y\subseteq X$, a
\emph{shattering map}. The edges of $G$ between two different fibers are called
\emph{$X$-edges}. Note that for any $X$-edge $uv$ of $G$ there exists $e\in X$
and $Y\subset X$ such that $u$ corresponds to $Y$ and $v$ corresponds to $X\cup
\{ e\}$. Since the fibers define a partition of the vertex-set of $G$,  any
path  connecting two vertices from different fibers of $G$   contains $X$-edges.

The following simple lemmas are well-known and will be used in the proof of Theorem \ref{CUOMtoAMP}:

\begin{lemma} \label{path-theta}
	A $(u,v)$-path $P$ of a partial cube $G$ is a shortest path if and only if
	all edges of $P$ belong to different  $\Theta$-classes of $G$.
\end{lemma}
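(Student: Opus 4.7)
The plan is to use the isometric embedding $\varphi\colon G\hookrightarrow \Q_m$ described in the preliminaries, under which graph distances in $G$ coincide with Hamming distances between the $\{0,1\}$-vectors $\varphi(\cdot)$, and each $\Theta$-class $E_i$ corresponds to a single coordinate $i\in\{1,\ldots,m\}$. The crucial observation is that traversing an edge of $E_i$ flips precisely the $i$-th coordinate of $\varphi$, so the whole lemma reduces to a parity count.

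For an arbitrary $(u,v)$-path $P$ of length $\ell$, I would let $n_i$ denote the number of edges of $P$ lying in $E_i$, so that $\ell=\sum_{i=1}^{m} n_i$. Walking along $P$ from $u$ to $v$ flips coordinate $i$ exactly $n_i$ times, hence $\varphi(u)_i=\varphi(v)_i$ iff $n_i$ is even. Combining this with the isometric embedding yields
\[
d(u,v)\;=\;|\{i:\varphi(u)_i\neq \varphi(v)_i\}|\;=\;|\{i:n_i\text{ is odd}\}|\;\le\;\sum_i n_i\;=\;\ell,
\]
with equality holding precisely when every $n_i$ is $0$ or $1$, i.e., exactly when no two edges of $P$ belong to the same $\Theta$-class.

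Both directions of the lemma then drop out. If $P$ is a shortest $(u,v)$-path, then $\ell=d(u,v)$ forces each $n_i\in\{0,1\}$, so the edges of $P$ lie in pairwise distinct $\Theta$-classes. Conversely, if the edges of $P$ lie in pairwise distinct $\Theta$-classes then $n_i\le 1$ for every $i$, which makes the inequality above tight and gives $\ell=d(u,v)$, so that $P$ is a shortest path. The only step invoking anything beyond arithmetic is the identification of $d(u,v)$ with the Hamming distance $|\{i:\varphi(u)_i\neq\varphi(v)_i\}|$, which is exactly the definition of an isometric embedding into $\Q_m$; apart from this, the proof is an entirely elementary parity argument, so I do not anticipate any real obstacle.
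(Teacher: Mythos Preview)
Your proof is correct and handles both directions cleanly, but it is not the argument the paper gives. The paper works intrinsically via Djokovi\'{c}'s theorem (Theorem~\ref{Djokovic}): assuming $P$ is shortest and contains two edges $xy$, $y'x'$ in the same class $E_i$ (consecutive along $P$ with respect to $E_i$), the subpath from $x$ to $x'$ is geodesic, so $y,y'\in I(x,x')$; but $x,x'$ lie in one halfspace $W(u,v)$ and $y,y'$ in the complementary one, contradicting convexity of halfspaces. This only spells out the forward direction, leaving the converse implicit. Your route instead passes through the explicit isometric embedding $\varphi$ and reduces everything to a parity/Hamming-distance count; this has the advantage of yielding both implications at once and of being entirely arithmetic once one accepts that an edge of $E_i$ flips exactly coordinate $i$. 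The paper's argument, in turn, avoids invoking coordinates and shows directly how the statement is a consequence of halfspace convexity, which is the structural content of being a partial cube. Both are standard and essentially equivalent in depth.
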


\begin{proof}
	Suppose $|P\cap E_i|\ge 2$ and  let $xy$ and $y'x'$ be edges of $P\cap E_i$
	that are consecutive with respect to $P$. Then $x,x'$  belong to the same
	halfspace $H^-_i$ or $H^+_i$ and $y,y'$ belong to the complementary
	halfspace. Since $y,y'\in I(x,x')$, this contradicts Theorem \ref{Djokovic}.
\end{proof}

\begin{lemma} \label{gated-theta}
	If $G$ is a partial cube, $H$ a gated subgraph of $G$, $v$ a vertex of $G$,
	and $v'$ the gate of $v$ in $H$, then no shortest $(v,v')$-path of $G$
	contains edges of a $\Theta$-class of $H$.
\end{lemma}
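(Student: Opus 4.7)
The plan is to argue by contradiction: suppose some shortest $(v,v')$-path $P$ contains an edge in a $\Theta$-class $E_i$ that also contains some edge $ab$ of $H$. I want to leverage the structure of the halfspaces $G^-_i, G^+_i$ together with the gate property of $v'$ to produce a violation of $v' \in I(v,w)$ for a carefully chosen $w \in H$.

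First, by Lemma~\ref{path-theta} the shortest path $P$ crosses each $\Theta$-class at most once, so the edge of $P$ in $E_i$ is the unique $E_i$-edge of $P$, which means $v$ and $v'$ lie in different halfspaces of $E_i$; say $v \in G^-_i$ and $v' \in G^+_i$. Second, since $ab \in E_i$ is an edge of $H$, the subgraph $H$ has vertices on both sides of $E_i$; in particular there exists $w \in H \cap G^-_i$ (namely, whichever of $a, b$ sits in $G^-_i$). Third, because $v'$ is the gate of $v$ in $H$, the definition of gate gives $v' \in I(v,w)$, i.e., $d(v,v') + d(v',w) = d(v,w)$.

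Now I derive the contradiction. Since $v, w \in G^-_i$ while $v' \in G^+_i$, the class $E_i$ separates $v$ from $v'$ and also separates $v'$ from $w$, so any $(v,v')$-path and any $(v',w)$-path each cross $E_i$ at least once; but $E_i$ does not separate $v$ from $w$, hence a shortest $(v,w)$-path need not cross $E_i$ at all (indeed by Theorem~\ref{Djokovic} the halfspace $G^-_i$ is convex, so it contains a whole shortest $(v,w)$-path). Using the standard partial cube identity $d(v,v') + d(v',w) = d(v,w) + 2|A \cap B|$, where $A$ (resp.\ $B$) is the set of $\Theta$-classes separating $v$ from $v'$ (resp.\ $v'$ from $w$), the class $E_i$ lies in $A \cap B$, giving $d(v,v') + d(v',w) \ge d(v,w) + 2$, contradicting $v' \in I(v,w)$.

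The only real subtlety, and the place I would be most careful, is step two: one must extract the vertex $w \in H$ on the \emph{same} side of $E_i$ as $v$, which is automatic from the existence of an $E_i$-edge within $H$, but it is worth stating explicitly so that the halfspace reasoning in the final step is unambiguous. Everything else is a direct application of the standard partial cube machinery already recalled in the paper.
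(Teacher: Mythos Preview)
Your proof is correct and follows essentially the same approach as the paper's: both argue by contradiction, pick an edge of $H$ in the offending $\Theta$-class $E_i$, and use the gate property to derive a contradiction via $E_i$. The only cosmetic difference is that you phrase the final contradiction through halfspace convexity (Theorem~\ref{Djokovic}), whereas the paper builds an explicit shortest $(v,y)$-path containing two $E_i$-edges and invokes Lemma~\ref{path-theta}; these are two sides of the same coin.
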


\begin{proof}
	Suppose a shortest $(v,v')$-path $P$ contains an edge $zz'$ of a
	$\Theta$-class $E_i$ of $H$. Let $xy$ be an edge of $H$ belonging to $E_i$.
	Since $G$ is bipartite, let $d(v',x)<d(v',y)$. Since $v'$ is the gate of
	$v$ in $H$, the path $R$ constituted by $P$, a shortest $(v',x)$-path of
	$H$, and the edge $xy$ is a shortest $(v,y)$-path of $G$. Since $R$
	contains two edges of $E_i$, $R$ cannot be a shortest path.
\end{proof}

\subsubsection{Isometric expansions and VC-dimension}
A triplet $(G^1,G^0,G^2)$ is called an {\it isometric cover} of a connected graph $G$, if the following conditions are satisfied:
\begin{itemize}
	\item $G^1$ and $G^2$ are two isometric subgraphs of $G$;
	\item $V(G)=V(G^1)\cup V(G^2)$ and $E(G)=E(G^1)\cup E(G^2)$;
	\item $V(G^1)\cap V(G^2)\ne \varnothing$ and $G^0$ is the
	subgraph of $G$ induced by  $V(G^1)\cap V(G^2)$.
\end{itemize}
A graph $G'$ is an {\it isometric expansion} of  $G$ with respect to an
isometric cover $(G^1,G^0,G^2)$ of $G$ (notation $G'=\psi(G)$) if $G'$ is
obtained from $G$ by replacing each vertex $v$ of $G^1$ by a vertex $v_1$ and
each vertex $v$ of $G^2$ by a vertex $v_2$ such that $u_i$ and $v_i$, $i=1,2$
are adjacent in $G'$ if and only if $u$ and $v$ are adjacent vertices of $G^i$
and $v_1v_2$ is an edge of $G'$ if and only if $v$ is a vertex of $G^0$.
If  $G^1=G^0$ (and thus $G^2=G$), then the isometric expansion is called
\emph{peripheral} and we say that $G'$ is obtained from $G$ by a peripheral
expansion with respect to $G^0$.

\begin{rexample} Recall that $M_*$ is a pc-minor of the graph $M$  and is obtained by contracting a single $\Theta$-class of $M$. In
Fig. \ref{fig:running_example_CUOM_expansion_isometric} we present an isometric expansion  of $M_*$, resulting in a partial cube $M'_*$ different from $M$.
The initial graph $M$ can be retrieved from $M_*$ by an isometric expansion  with respect to the isometric cover of $M_*$ given in Fig. \ref{fig:running_example_CUOM_theta_class}(b).
\begin{figure}[htb]
	\centering
	\includegraphics[width=.8\textwidth]{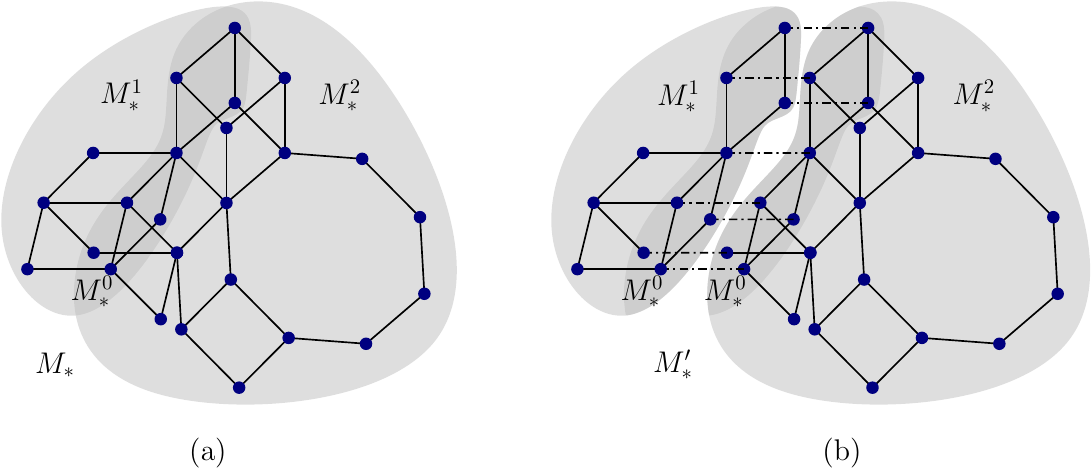}
	\caption{(a) The pc-minor $M_*$ of $M$. (b) An isometric expansion $M'_*$ of $M_*$.}
	\label{fig:running_example_CUOM_expansion_isometric}
\end{figure}
\end{rexample}

If $(G^1,G^0,G^2)$ is not peripheral, then $G^0$ is a separator of
$G$. By~\cite{Ch_thesis,Ch_hamming}, $G$ is a partial cube if and only if $G$
can be obtained by a sequence of isometric expansions from a single vertex.

%

There is an intimate relation between contractions and isometric expansions. If
$G$ is a partial cube and $E_i$ is a $\Theta$-class of $G$, then contracting
$E_i$ we obtain the pc-minor $\pi_i(G)$ of $G$. Then $G$ can be obtained from
$\pi_i(G)$ by an isometric expansion with respect to $(\pi_i(G^+_i),
G_0,\pi_i(G^-_i))$, where $\pi_i(G^+_i)$ and  $\pi_i(G^-_i)$ are the images by
the contraction of the halfspaces $G^+_i$ and $G^-_i$ of $G$ and $G_0$ is the
contraction of the vertices of $G$ incident to edges from $E_i$. The following
result is used in the proof of Theorem \ref{OMtoAMP}:

\begin{proposition} \label{expansion-Qd+1} \cite[Proposition 15]{ChKnPh}
	Let $G'$ be obtained from $G\in {\mathcal F}(\Q_{d+1})$ by an isometric
	expansion with respect to $(G^1,G^0,G^2)$. Then $G'\in {\mathcal
	F}(\Q_{d+1})$ if and only if $\vcd(G^0)\le d-1$.
\end{proposition}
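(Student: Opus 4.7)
By Lemma \ref{VCdim_d}, the claim is equivalent to the equivalence $\vcd(G') \le d \Longleftrightarrow \vcd(G^0) \le d-1$, and I will prove the two implications separately through the shattering characterization of Lemma \ref{shattering-fibers}. Let $E_i$ denote the new $\Theta$-class of $G'$; by the definition of isometric expansion, its two halfspaces in $G'$ are canonical copies of $G^1$ and $G^2$, meeting only in the duplicated middle coming from $V(G^0)$.

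For the contrapositive of the \emph{only if} direction, I would assume that $G^0$ shatters some $X \subseteq U \setminus \{i\}$ of size $d$. Fixing, for each $Y \subseteq X$, a witness $v(Y) \in V(G^0)$, the two expanded copies $v(Y)_1, v(Y)_2 \in V(G')$ lie on opposite sides of $E_i$, so together they realize all $2^{d+1}$ sign patterns on $X \cup \{i\}$; hence $G'$ shatters a set of size $d+1$. For the \emph{if} direction, I would assume $\vcd(G^0) \le d-1$ and, toward a contradiction, that $G'$ shatters some $X$ with $|X| = d+1$. If $i \notin X$, the contraction $\pi_i$ affects only the $i$-coordinate, so $G = \pi_i(G')$ still shatters $X$, violating $\vcd(G) \le d$. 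Otherwise $i \in X$; setting $X' = X \setminus \{i\}$ of size $d$, the shattering of $X$ produces, for every $Y \subseteq X'$, witnesses of sign pattern $Y$ in each of the two halfspaces of $E_i$, so both $G^1$ and $G^2$ (as subgraphs of $G$) shatter $X'$.

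The main obstacle is to promote this to the shattering of $X'$ by $G^0 = G^1 \cap G^2$. The peripheral case $G^0 = G^1$ is immediate, and otherwise I will use the separator property: because $E(G) = E(G^1) \cup E(G^2)$ and every edge of $G$ therefore lies entirely inside $G^1$ or entirely inside $G^2$, no edge can join $V(G^1) \setminus V(G^0)$ to $V(G^2) \setminus V(G^0)$, so $V(G^0)$ separates these two sets in $G$. For any $Y \subseteq X'$, choose witnesses $u \in V(G^1)$ and $v \in V(G^2)$ with $X'$-projection $Y$; then any shortest $(u,v)$-path of $G$ meets $V(G^0)$ at some vertex $w$. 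Since $u$ and $v$ agree on every coordinate in $X'$, Lemma \ref{path-theta} implies that no $\Theta$-class indexed by $X'$ can appear along that shortest path, so $w$ has the same $X'$-projection $Y$. Hence every $Y$-fiber of $G^0$ is nonempty and Lemma \ref{shattering-fibers} gives $\vcd(G^0) \ge d$, contradicting the hypothesis and completing the proof.
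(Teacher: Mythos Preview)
Your argument is correct. The paper does not supply a proof of this proposition; it is quoted from \cite{ChKnPh}, so there is no in-paper proof to compare against. Two minor remarks: the appeal to Lemma~\ref{shattering-fibers} is not really needed, since you only use the bare definition of shattering (existence of a witness in each fiber) rather than the convexity statement; and in the separator step you should observe that if either chosen witness $u$ or $v$ already lies in $V(G^0)$ you are done for that $Y$, so you may assume $u\in V(G^1)\setminus V(G^0)$ and $v\in V(G^2)\setminus V(G^0)$ before invoking the separation property. With these cosmetic adjustments the proof is complete.
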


\subsection{OMs, COMs, and AMPs}
Here we recall some results about OMs, COMs, and AMPs.

\subsubsection{Faces}
First, since OMs satisfy the axiom (Sym), we obtain:

\begin{lemma} \label{OM-antipodal}
	The tope graph of any OM  is an antipodal partial cube.
\end{lemma}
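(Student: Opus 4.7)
The plan is to use the symmetry axiom (Sym) together with the fact that the tope graph $G(\covectors)$ is an isometric subgraph of the hypercube $\Q_m$ where $m=|U|$. Recall that the topes are $\{\pm 1\}$-vectors and that the distance in $\Q_m$ between two $\{\pm 1\}$-vectors $T$ and $T'$ equals the size $|S(T,T')|$ of their separator.

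First I would fix an arbitrary tope $T\in\covectors$. By (Sym), the sign reversal $-T$ is also a tope, and $S(T,-T)=U$, so the hypercube distance satisfies $d_{\Q_m}(T,-T)=m$. Since $G$ is an isometric subgraph of $\Q_m$, we have $d_G(T,-T)=m$ as well, and this is the diameter.

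Next I would show every tope $T''$ lies in the interval $I(T,-T)$. For any tope $T''$, the separators $S(T,T'')$ and $S(T'',-T)$ partition $U$ (because $T$ and $-T$ disagree in every coordinate), so
\[
d_G(T,T'')+d_G(T'',-T)=|S(T,T'')|+|S(T'',-T)|=m=d_G(T,-T),
\]
where the first equality again uses that $G$ is isometric in $\Q_m$. Hence $T''\in I(T,-T)$, which gives $G=I(T,-T)$, and $-T$ is the desired antipode of $T$. Since $T$ was arbitrary, $G$ is antipodal.

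There is no real obstacle here: the whole content lies in noting that (Sym) provides the candidate antipode, and the partition of $U$ into the two separators forces the triangle equality to hold tightly, which transfers from $\Q_m$ to $G$ by isometry.
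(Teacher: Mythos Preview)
Your proof is correct and is precisely the natural elaboration of what the paper leaves implicit: the paper merely asserts that the lemma follows from axiom~(Sym) without spelling out the interval argument. Your use of the isometric embedding together with the partition $S(T,T'')\cup S(T'',-T)=U$ is exactly the right way to verify that $-T$ is the antipode of $T$ in the sense $G=I(T,-T)$.
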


Let $(U,\covectors)$ be a COM. For a covector $X\in \covectors$, recall that
$\F(X)$ 
denotes the \emph{face} of $X$. Let also $\C(X):=\C(F(X))$ denote the smallest cube  of $\Q(U):=\{-1,+1\}^{U}$ containing $\F(X)$. 
Note that $\F(X)$ and $\C(X)$ are defined by the same set of $\Theta$-classes.
We continue with a fundamental property of faces of COMs:

\begin{lemma} \label{face-OM} \cite{BaChKn}
	For each covector $X$ of a COM $\covectors$, the face $\F(X)$ is an OM.
\end{lemma}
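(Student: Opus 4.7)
The plan is to verify the three OM axioms (C), (SE), (Sym) for $\F(X) = \{X \circ Y : Y \in \covectors\}$. All elements of $\F(X)$ agree with $X$ on $\underline{X}$, so $\F(X)$ is most naturally regarded as a system of sign vectors on the ground set $X^0 = U \setminus \underline{X}$; in particular, (Sym) has to be interpreted as sign reversal on the coordinates in $X^0$, which on $U$ corresponds to the involution $X \circ Y \mapsto X \circ (-Y)$.

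Step 1 (Composition). Given $Z_1 = X \circ Y_1$ and $Z_2 = X \circ Y_2$ in $\F(X)$, I would first recall that (FS) implies (C) in the ambient COM $\covectors$, so $Y_1 \circ Y_2 \in \covectors$. A routine coordinate-wise computation (splitting into $e \in \underline{X}$, where both $X_e$ and $(X \circ Y_i)_e$ equal $X_e \neq 0$, and $e \in X^0$, where the composition reduces to that of the $Y_i$) yields $Z_1 \circ Z_2 = X \circ (Y_1 \circ Y_2)$, hence $Z_1 \circ Z_2 \in \F(X)$.

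Step 2 (Strong Elimination). Given $Z_1, Z_2 \in \F(X)$ and $e \in S(Z_1, Z_2)$, I observe first that the separator is disjoint from $\underline{X}$ (since $Z_1$ and $Z_2$ agree with $X$ there), so $X_e = 0$. Applying (SE) in $\covectors$ to $Z_1, Z_2 \in \covectors$ produces $W \in \covectors$ with $W_e = 0$ and $W_f = (Z_1 \circ Z_2)_f$ for all $f \notin S(Z_1, Z_2)$. I then set $Z := X \circ W$: this lies in $\F(X)$, satisfies $Z_e = 0$ because both $X_e$ and $W_e$ vanish, and coincides with $Z_1 \circ Z_2$ outside $S(Z_1, Z_2)$ (on $\underline{X}$ all three vectors agree with $X$, on $X^0 \setminus S(Z_1,Z_2)$ they agree with $W$). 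This witnesses (SE) inside $\F(X)$.

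Step 3 (Symmetry) and main obstacle. The delicate step is sign reversal. Given $Z = X \circ Y \in \F(X)$, I would apply (FS) directly to the pair $X, Y \in \covectors$ to obtain $X \circ (-Y) \in \covectors$; since $X \circ (X \circ (-Y)) = X \circ (-Y)$, this covector lies in $\F(X)$ and plays the role of the antipode of $Z$ on the free coordinates $X^0$. The principal conceptual obstacle is precisely this: the naive reading of (Sym) on the full ground set $U$ fails (the face is never closed under literal sign reversal unless $X = \mathbf{0}$), so one must recognize that the correct symmetry for a face is the one built into axiom (FS)—reversal followed by composition with the anchoring covector $X$—and that this is exactly what is needed to equip $\F(X)$ with OM structure once the redundant coordinates in $\underline{X}$ are (implicitly) forgotten.
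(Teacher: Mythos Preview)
The paper does not supply its own proof of this lemma; it is simply quoted from \cite{BaChKn} (``By \cite[Lemma 4]{BaChKn}, each face $\F(X)$ of $\covectors$ is an OM''). Your argument is the standard direct verification and is correct. A couple of minor remarks: in Step~2 your composition with $X$ is redundant---since $\underline{X}\cap S(Z_1,Z_2)=\varnothing$, the witness $W$ produced by (SE) already satisfies $W_f=(Z_1\circ Z_2)_f=X_f$ for $f\in\underline{X}$, hence $X\le W$ and $W\in\F(X)$---but it does no harm. In Step~3 you could equally well apply (FS) to the pair $X,Z$ (rather than $X,Y$), obtaining $X\circ(-Z)\in\covectors$; since $Z$ and $Y$ agree on $X^0$ this gives the same antipode, and avoids any worry about the choice of representative $Y$. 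Your identification of the face as an OM on the ground set $X^0$ (equivalently, after deleting the constant coordinates $\underline{X}$) is exactly the intended reading.
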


The following lemma is implicit in \cite{BaChKn}, explicit in \cite{KnMa}, and used in the proof of Theorem \ref{CUOMtoAMP}:

\begin{lemma} \label{face-gated}
	For each covector $X$ of a COM $\covectors$, the face $\F(X)$ defines a gated
	subgraph of the  tope graph $G$ of $\covectors$. Moreover,
	for any tope $Y$ of $\covectors$, $X\circ Y$ is  the gate of $Y$ in the
	cube $\C(X)$.
\end{lemma}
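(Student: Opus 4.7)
The plan is to identify a single sign vector, namely $W:=X\circ Y$, that simultaneously plays the role of gate both for $Y$ in the subcube $\C(X)$ and for $Y$ in $\F(X)$, and then to verify the gate property via the hypercube-interval description of partial cubes. First, I would observe that the topes of $\F(X)$ are exactly the topes of $\covectors$ lying in the subcube $\C(X)$: any $T=X\circ Z\in\F(X)$ satisfies $T_e=X_e$ for every $e\in\underline{X}$, so $T\in\C(X)$; conversely, any tope $T$ of $\covectors$ with $T\in\C(X)$ satisfies $T=X\circ T\in\F(X)$. In particular, for any tope $Y$, the sign vector $W$ is itself a tope (each coordinate is either $X_e\in\{\pm 1\}$ or $Y_e\in\{\pm 1\}$) and $W\in\F(X)\cap\C(X)$. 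A direct coordinate check shows that $W$ is the unique closest vertex of $\C(X)$ to $Y$ in $\Q(U)$, which settles the ``moreover'' statement about the cube $\C(X)$.

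For the main gatedness claim, I would show that $W$ lies on a shortest $(Y,T)$-path of $G$ for every tope $T$ of $\F(X)$. Since $G$ is isometric in $\Q_m$, graph distance equals Hamming distance, so it suffices to verify that $W$ lies in the hypercube interval $I_{\Q_m}(Y,T)$, i.e., $W_e\in\{Y_e,T_e\}$ for every coordinate $e$. For $e\in X^0$, composition gives $W_e=Y_e$. For $e\in\underline{X}$, we have $W_e=X_e$, and since $T\in\C(X)$ also $T_e=X_e$; hence $W_e=T_e$. Consequently
\[
d_G(Y,T)=d_{\Q_m}(Y,T)=d_{\Q_m}(Y,W)+d_{\Q_m}(W,T)=d_G(Y,W)+d_G(W,T),
\]
using that $Y$, $W$ and $T$ all lie in $G$.

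Thus $W\in I_G(Y,T)$ for every tope $T$ of $\F(X)$, so $W$ is the (unique) gate of $Y$ in the subgraph of $G$ induced by the topes of $\F(X)$. Since $Y$ was arbitrary, this shows $\F(X)$ is gated in $G$ and the gate of $Y$ agrees with the gate of $Y$ in the ambient cube $\C(X)$, both equal to $X\circ Y$. The only real obstacle I anticipate is bookkeeping: keeping straight the three incarnations of $\F(X)$ — as a set of covectors, as its set of topes, and as a subgraph of $G$ sitting inside $\C(X)\subseteq\Q(U)$ — so that the same vector $W=X\circ Y$ legitimately plays the role of gate in all three.
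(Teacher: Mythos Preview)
Your proposal is correct and follows essentially the same approach as the paper: both identify $W=X\circ Y$ as the candidate gate, note that it is a tope lying in $\F(X)\subseteq\C(X)$, and use the coordinate description (namely $(X\circ Y)_e=Y_e$ for $e\notin\underline{X}$ and $(X\circ Y)_e=X_e=T_e$ for $e\in\underline{X}$) to verify the gate property. The paper's proof is terser and stops at ``$X\circ Y$ is the gate of $Y$ in $\C(X)$'', leaving the transfer to gatedness of $\F(X)$ in $G$ implicit (via isometry of $G$ in $\Q_m$ and $\F(X)\subseteq\C(X)$), whereas you spell out the interval check and the isometry step explicitly; this is a difference in detail, not in substance.
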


\begin{proof}
	Pick any $Y\in \{-1,+1\}^U\cap \covectors$. By the definition of $X\circ
	Y$,  $X\circ Y\in \{-1,+1\}^U$, thus $X\circ Y$ is a tope of $\covectors$.
	By definition of $\F(X)$, $X\circ Y$ belongs to $\F(X)$ (and thus to
	$\C(X)$). Since $(X\circ Y)_e=Y_e$ for all $e\in U\setminus \underline{X},$
	necessarily $X\circ Y$ is the gate of $Y$ in $\C(X).$
\end{proof}

\subsubsection{Minors and pc-minors} In  the present subsection we give two lemmas that are essential
for inductive proofs.
We start with the following result about pc-minors of tope graphs of COMs and
AMPs, which follows from the results of \cite{BaChDrKo} and \cite{BaChKn}:

\begin{lemma} \label{contractionsCOM-AMP}
	The classes of tope graphs of COMs and AMPs are closed under taking pc-minors.
	The class of tope graphs of  OMs is closed under contractions.
\end{lemma}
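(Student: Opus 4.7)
The plan is to reduce to elementary pc-minor operations---contraction of a single $\Theta$-class and restriction to a single halfspace---since arbitrary pc-minors are iterates of these two operations. Fix $e \in U$ corresponding to a $\Theta$-class $E_e$ of $G(\covectors)$. I would first translate each elementary pc-minor into covector language: the contraction $\pi_e(G(\covectors))$ is the tope graph of the \emph{deletion} $\covectors\setminus e := \{X|_{U\setminus e} : X \in \covectors\}$, and the restriction to the halfspace $G^+_e$, viewed inside $\Q(U\setminus e)$, is the tope graph of the system derived from $\{X \in \covectors : X_e \in \{0,+\}\}$ (and symmetrically for $G^-_e$). It then suffices to check that each of these two operations preserves the defining axioms of each class.

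First I would handle deletion, which is the easy direction. The identity $(X\circ -Y)|_{U\setminus e} = X|_{U\setminus e} \circ -Y|_{U\setminus e}$ and the inclusion $S(X|_{U\setminus e}, Y|_{U\setminus e}) \subseteq S(X,Y)\setminus\{e\}$ let the witnesses of \textbf{(FS)} and \textbf{(SE)} in $\covectors$ project to witnesses in $\covectors\setminus e$; \textbf{(Sym)} survives because $-(\covectors\setminus e) = (-\covectors)\setminus e$; and \textbf{(IC)} survives because upward closure commutes with coordinate projection. Thus deletion preserves OMs, COMs, and AMPs, which already establishes the second assertion of the lemma (OMs are closed under contractions).

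The main obstacle is halfspace restriction: the naive lift of \textbf{(FS)} fails because if $X,Y\in\covectors$ have $X_e = 0$ and $Y_e = +$, then $(X\circ -Y)_e = -$, so $X\circ -Y$ leaves the halfspace. Here I would invoke \cite{BaChKn} for COMs and \cite{BaChDrKo} for AMPs, where an alternative representative in $\covectors^+_e$ agreeing with $X|_{U\setminus e} \circ -Y|_{U\setminus e}$ is constructed by applying a strong-elimination step at coordinate $e$ against an auxiliary covector with non-negative $e$-entry (for instance a tope extending $X|_{U\setminus e}$); \textbf{(SE)} and \textbf{(IC)} for the halfspace restriction are verified analogously in those references. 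Note that halfspace restriction generally destroys \textbf{(Sym)} (a halfspace of a centrally symmetric tope set is not centrally symmetric), which is precisely why the OM statement of the lemma is restricted to contractions. Concatenating elementary steps then yields closure under arbitrary pc-minors.
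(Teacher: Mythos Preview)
Your approach is correct and aligns with the paper's, which does not give a proof at all but merely records that the lemma ``follows from the results of \cite{BaChDrKo} and \cite{BaChKn}.'' You supply the missing translation between pc-minor operations and covector-level operations (pc-contraction $\leftrightarrow$ deletion, pc-restriction $\leftrightarrow$ halfspace) and sketch the axiom checks, which is more than the paper does.

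One imprecision worth flagging: the covector system realizing the halfspace $G^+_e$ is more cleanly taken as $\covectors^+_e = \{X\in\covectors : X_e=+\}$ (this is exactly the paper's Proposition~\ref{carriers}) rather than your $\{X\in\covectors : X_e\in\{0,+\}\}$. With the former choice there is no obstacle at all: if $X_e=Y_e=+$ then $(X\circ -Y)_e=X_e=+$ and, for $f\in S(X,Y)$ (so $f\ne e$), the \textbf{(SE)} witness $Z$ satisfies $Z_e=(X\circ Y)_e=+$, so both axioms hold in $\covectors^+_e$ by direct restriction. The ``main obstacle'' you describe, and the strong-elimination repair you sketch, are artifacts of having included the hyperplane covectors $X_e=0$ in your halfspace system; \cite{BaChKn} does not need that detour. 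This does not affect the correctness of your conclusion, only the efficiency of the route.
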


We continue with the notions of restriction,  contraction, and minors for COMs
(which can be compared  with the similar notions for partial cubes). Let
$(U,\covectors)$ be a COM and $A\subseteq U$. Given a sign vector
$X\in\{-1,0,+1\}^U$ by $X\setminus A$ we refer to the \emph{restriction} of $X$
to $U\setminus A$, that is $X\setminus A\in\{-1,0,+1\}^{U\setminus A}$ with
$(X\setminus A)_e=X_e$ for all $e\in U\setminus A$. The \emph{deletion} of $A$
is defined as $(U\setminus A,\covectors\setminus A)$, where
$\covectors\setminus A:=\{X\setminus A:  X\in\covectors\}$. The
\emph{contraction} of $A$ is defined as $(U\setminus A,\covectors/ A)$, where
$\covectors/ A:=\{X\setminus A: X\in\covectors\text{ and }\underline{X}\cap
A=\varnothing\}$. If $\covectors'$ arises by deletions and contractions from
$\covectors$, $\covectors'$ is said to be \emph{minor} of  $\covectors$.
Deletion in a COM translates to pc-contraction in its tope graph,
while contraction corresponds to what is called the zone graph,
see~\cite{KnMa}.

\begin{lemma}\label{lem:minorclosed}  \cite[Lemma 1]{BaChKn}
	The classes of COMs and AMPs are closed under taking  minors.
\end{lemma}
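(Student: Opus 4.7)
The plan is to verify the claim axiomatically, noting that every minor is obtained by iterating single-element deletions and single-element contractions, so it suffices to show that $(U\setminus\{e\}, \covectors\setminus e)$ and $(U\setminus\{e\}, \covectors/e)$ are COMs (resp.\ AMPs) whenever $(U,\covectors)$ is. The key observation is that for any $X,Y\in\{\pm 1,0\}^U$ we have $S(X\setminus e, Y\setminus e)=S(X,Y)\setminus\{e\}$ and $(X\setminus e)\circ(Y\setminus e)=(X\circ Y)\setminus e$, so the composition and separator operations commute with restriction to $U\setminus\{e\}$.

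For deletion, face symmetry (FS) for $\covectors\setminus e$ is immediate from the commutation identity above: given $X\setminus e, Y\setminus e\in\covectors\setminus e$, we have $(X\setminus e)\circ -(Y\setminus e)=(X\circ -Y)\setminus e\in\covectors\setminus e$. For strong elimination (SE), take $f\in S(X\setminus e,Y\setminus e)\subseteq S(X,Y)$ and apply (SE) in $\covectors$ to get $Z\in\covectors$ with $Z_f=0$ and $Z_g=(X\circ Y)_g$ for all $g\in U\setminus S(X,Y)$; then $Z\setminus e$ witnesses (SE) in the deletion, because $(U\setminus\{e\})\setminus S(X\setminus e,Y\setminus e)\subseteq U\setminus S(X,Y)$ in all cases (whether or not $e\in S(X,Y)$). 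For contraction, the verification is similar but uses that all covectors considered have $X_e=Y_e=0$: this ensures that the composition $X\circ -Y$ and the eliminator $Z$ produced by (SE) also have a zero at $e$, hence their restrictions lie in $\covectors/e$.

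For AMPs, one additionally verifies ideal composition (IC). For a deletion, given $X\setminus e\in\covectors\setminus e$ and $X\setminus e\le Y'$ in $\{\pm 1,0\}^{U\setminus\{e\}}$, extend $Y'$ to $Y$ by setting $Y_e:=X_e$; then $X\le Y$, so $Y\in\covectors$ by (IC) in $\covectors$, giving $Y'=Y\setminus e\in\covectors\setminus e$. For a contraction, extend instead by $Y_e:=0$; since $X_e=0$, we still have $X\le Y$, and the extended $Y$ belongs to $\covectors$ by (IC) with $Y_e=0$, so $Y'\in\covectors/e$.

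No step presents a real obstacle; the entire argument is a bookkeeping exercise once one records the commutation of separators and compositions with restriction to $U\setminus\{e\}$. The one point that requires slight care is the (SE) check for deletion in the case $e\in S(X,Y)$, where one must notice that dropping the coordinate $e$ from the separator does not force the eliminator to match $X\circ Y$ on $e$ (because $e$ is simply discarded), so the same $Z$ produced by strong elimination in $\covectors$ works after restriction.
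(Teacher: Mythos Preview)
Your axiomatic verification is correct. The paper itself does not supply a proof of this lemma; it simply quotes \cite[Lemma~1]{BaChKn}, so there is nothing to compare against directly. Your reduction to single-element deletion and contraction, together with the observation that composition and separator commute with restriction to $U\setminus\{e\}$, is exactly the standard route (and is essentially how \cite{BaChKn} proceeds). The one subtlety you flag---that in the (SE) check for deletion with $e\in S(X,Y)$ the coordinate $e$ is discarded before any condition is imposed---is handled correctly, and the (IC) extensions for AMPs are the right ones.
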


\subsubsection{Hyperplanes, carriers, and half-carriers}
For a COM $(U,\covectors)$, a {\it hyperplane} of $\covectors$ is the set
$\covectors_e^0:=\{ X\in \covectors: X_e=0\} \mbox{ for some } e\in U.$ The
{\it carrier} $N(\covectors_e^0)$ of the hyperplane $\covectors_e^0$ is the
union of all faces $\F(X')$ of $\covectors$ with $X'\in \covectors_e^0$. The
{\it positive and negative (open) halfspaces} supported by the hyperplane
$\covectors_e^0$ are $\covectors_e^+:= \{ X\in \covectors: X_e=+1\}$ and
$\covectors_e^-:=\{ X\in \covectors: X_e=-1\}.$ The carrier $N(\covectors_e^0)$
minus $\covectors_e^0$ splits into its positive and negative parts:
$N^+(\covectors_e^0):=\covectors^+_e\cap N(\covectors_e^0)$ and
$N^-(\covectors_e^0):=\covectors_e^-\cap N(\covectors_e^0),$ which we call
\emph{half-carriers}.

\begin{proposition} \label{carriers} \cite[Proposition 6]{BaChKn}
	In COMs and AMPs, all halfspaces, hyperplanes, carriers, and half-carriers
	are COMs and AMPs. In OMs, all hyperplanes and carriers are OMs.
\end{proposition}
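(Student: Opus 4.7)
The approach is to verify the COM axioms (FS) and (SE) directly for each of the four substructures, and to then confirm the additional axioms --- (IC) in the AMP case and (Sym) in the OM case --- where applicable.

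First I handle halfspaces and hyperplanes uniformly by setting $\covectors_e^\sigma := \{X \in \covectors : X_e = \sigma\}$ for $\sigma \in \{+,-,0\}$. For any $X, Y \in \covectors_e^\sigma$ we have $e \notin S(X,Y)$, whence $(X \circ -Y)_e = \sigma$ by a trivial check on the three cases ($\sigma=0$ gives $(-Y)_e=0$ too, $\sigma=\pm1$ already forces the first nonzero entry at $e$ to be $\sigma$); and any eliminator $Z$ produced by (SE) in $\covectors$ satisfies $Z_e = (X \circ Y)_e = \sigma$ since $e$ lies outside $S(X,Y)$. Thus (FS) and (SE) are inherited. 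For the AMP case, (IC) for halfspaces follows because $X_e = \sigma \in \{\pm 1\}$ and $X \leq Y$ force $Y_e = \sigma$; the hyperplane should be viewed via the contraction $\covectors/\{e\}$ on $U \setminus \{e\}$, where $\uparr$-closure lifts directly from $\covectors$. For OMs, (Sym) transfers to hyperplanes via $(-X)_e = 0 \iff X_e = 0$ but visibly fails for the open halfspaces.

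Next I turn to carriers. The key reformulation is $\F(X') = \{A \in \covectors : X' \leq A\}$, which gives $N(\covectors_e^0) = \{A \in \covectors : X' \leq A \text{ for some } X' \in \covectors_e^0\}$. For (FS), the pointwise observation that $A \leq A \circ -B$ always holds (on $\underline{A}$ one has equality; off $\underline{A}$ one uses $0 \leq \pm 1$) makes the argument immediate: if $X_A \in \covectors_e^0$ witnesses $A \in N$, then $X_A \leq A \leq A \circ -B$ still witnesses $A \circ -B \in N$, while $A \circ -B \in \covectors$ by (FS) in $\covectors$. For (SE), given $f \in S(A, B)$, apply (SE) in $\covectors$ to obtain an eliminator $Z$, and then produce a new witness $X' \in \covectors_e^0$ with $X' \leq Z$ by applying strong elimination inside the OM face of $X_A \circ -X_B$ (which is an OM by Lemma~\ref{face-OM}) to the coordinate $f$. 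Symmetry (Sym) in the OM case follows from $\F(-X') = -\F(X')$ together with $(-X')_e = 0 \iff X'_e = 0$, so $N$ is closed under negation. In the AMP case, (IC) collapses the witness condition to the clean form $N(\covectors_e^0) = \{A \in \covectors : A^0 \in \covectors\}$, where $A^0$ agrees with $A$ off $e$ and is zero at $e$; both the COM axioms and $\uparr$-closure then transfer from $\covectors$ applied to $A^0$ and $B^0$ (for instance $(A \circ -B)^0 = A^0 \circ -B^0 \in \covectors$, yielding (FS) for $N$). Finally, the half-carriers $N^\pm(\covectors_e^0) = N(\covectors_e^0) \cap \covectors_e^\pm$ are intersections of structures already proven to be COMs (respectively AMPs), hence inherit all relevant axioms.

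The main obstacle is the (SE) verification for the carrier in a general COM: the easy witness $X_A \leq A$ for $A \in N$ need not satisfy $X_A \leq Z$, since if $f \in \underline{X_A}$ then $(X_A)_f \neq 0 = Z_f$. The remedy is a second application of strong elimination, carried out inside the OM face $\F(X_A \circ -X_B) \subseteq N(\covectors_e^0)$, which produces the required covector of $\covectors_e^0$ below $Z$. Once this step is carried out, everything else is routine case checking on the three possible values of the $e$-th coordinate, together with the formal inheritance of (IC) and (Sym) as outlined above.
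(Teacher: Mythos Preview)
The paper does not prove this proposition: it cites \cite[Proposition~6]{BaChKn} for the COM and OM statements and only remarks that the AMP case follows from axiom (IC). Your direct axiom-by-axiom verification therefore goes well beyond what the paper does, and your treatment of halfspaces and hyperplanes, the (Sym) check for OMs, the (IC) checks, and the clean description $N(\covectors_e^0)=\{A\in\covectors: A^0\in\covectors\}$ in the AMP case are all correct.

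There is, however, a genuine gap in your (SE) verification for the carrier of a general COM. You correctly isolate the obstacle: the eliminator $Z$ obtained from (SE) in $\covectors$ need not satisfy $X_A\le Z$ when $f\in\underline{X_A}$. But your proposed remedy---``a second application of strong elimination, carried out inside the OM face $\F(X_A\circ -X_B)$, to the coordinate $f$''---is not a proof as written. You do not say which two covectors of that face you eliminate between, and more seriously, a generic element of $\F(X_A\circ -X_B)$ has \emph{nonzero} $e$-coordinate (since $(X_A\circ -X_B)_e=0$, composition with any $Y$ gives $Y_e$), so an eliminator produced inside that face has no reason to land in $\covectors_e^0$. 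Even if it did, you give no argument linking it to the \emph{specific} $Z$ you already fixed: (SE) only constrains $Z$ on $U\setminus S(A,B)$, while on $S(A,B)\setminus\{f\}$ the value of $Z$ is unconstrained, so there is no a priori relation to exploit. Filling this step is precisely the content of the cited result in \cite{BaChKn}, and it does require a more careful construction than a single extra elimination.

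A smaller imprecision: for half-carriers you write that ``intersections of structures already proven to be COMs \ldots\ inherit all relevant axioms''. This is false for arbitrary intersections of COMs, since (SE) does not pass to intersections in general. Here the argument is salvageable for a different reason: for $A,B\in N^+(\covectors_e^0)$ one has $e\notin S(A,B)$, so \emph{any} eliminator $Z\in N(\covectors_e^0)$ you produce automatically satisfies $Z_e=(A\circ B)_e=+$ and hence lies in $N^+$. You should state this explicitly rather than appeal to a general (and false) intersection principle.
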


The result about AMPs was not stated in  \cite[Proposition 6]{BaChKn}, however it easily follows from the definition of AMPs as  COMs satisfying the axiom (IC). The result also
follows from  \cite{BaChDrKo}. Proposition \ref{carriers} is used in the proof of the characterization of CUOMs from Proposition \ref{CUOM}.

\subsubsection{Amalgams}
One  important property of COMs is that they can be obtained by amalgams
from their maximal faces, i.e., they are amalgams of OMs.
Now  we make this definition precise. Following \cite{BaChKn}, we say that a
system $(U,\covectors)$ of sign vectors is a {\it COM-amalgam} of two COMs
$(U,\covectors')$ and $(U,\covectors'')$ if the following conditions are
satisfied:
\begin{itemize}
	\item[(1)] $\covectors=\covectors'\cup \covectors''$ with
	$\covectors'\setminus \covectors'', \covectors''\setminus \covectors',
	\covectors'\cap \covectors''\ne \varnothing$;
	
	\item[(2)] $(U,\covectors'\cap \covectors'')$ is a COM;
	
	\item[(3)] $\covectors'\circ \covectors''\subseteq \covectors'$ and
	$\covectors''\circ \covectors'\subseteq \covectors''$;
	
	\item[(4)] for $X\in \covectors'\setminus \covectors''$ and $Y\in
	\covectors''\setminus \covectors'$ with $X^0=Y^0$ there
	exists a shortest path in the tope graph $G(\covectors\setminus X^0)$ of
	the deletion $\covectors\setminus X^0$ of $X^0$.
\end{itemize}

We continue with two propositions: Proposition \ref{amalgamCOM} is used in the proof of Theorem \ref{CUOMtoAMP} and Proposition \ref{AMPamalgam}
is used in the proofs of both main results.

\begin{proposition} \label{amalgamCOM} \cite[Proposition 7 \& Corollary
2]{BaChKn}
	The COM-amalgam of  two COMs $\covectors',\covectors''$ is a COM
	$\covectors$ in which every facet is a facet of $\covectors'$ or
	$\covectors''$. Any COM which is not an OM is  obtained via successive COM-amalgams from its
	facets.
\end{proposition}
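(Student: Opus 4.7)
The proposition has two parts: first, the amalgam $(U,\covectors'\cup\covectors'')$ is a COM whose facets come from $\covectors'$ or $\covectors''$; second, every COM arises by iterated COM-amalgamation from its maximal faces. I would verify the first part by directly checking the COM axioms (FS) and (SE) together with facet preservation, and the second part by induction on the number of maximal faces.

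For (FS), the case where $X,Y$ lie in the same factor is immediate from (FS) in that factor. For the mixed case $X\in\covectors'\setminus\covectors''$, $Y\in\covectors''\setminus\covectors'$, I would pick a vector $W\in\covectors'\cap\covectors''$ (nonempty by~(1)), apply (FS) inside $\covectors''$ to $Y$ and $W$ to produce $V\in\covectors''$ that agrees with $-Y$ on the zero set of $X$, and then invoke condition~(3) to conclude $X\circ V=X\circ -Y\in\covectors'\subseteq\covectors$. Facet preservation is also a consequence of~(3): if $X\in\covectors'$, then $\F(X)=\{X\circ Z:Z\in\covectors\}\subseteq\covectors'$, so a facet of $\covectors$ based at a covector of $\covectors'$ is automatically a facet of $\covectors'$.

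The core difficulty is axiom (SE) in the mixed case. Given $X\in\covectors'\setminus\covectors''$, $Y\in\covectors''\setminus\covectors'$, and $e\in S(X,Y)$, I would use condition~(4) to obtain a shortest path in the tope graph of the deletion $\covectors\setminus X^0$, decompose it into segments lying in $\covectors'$ and segments lying in $\covectors''$, and apply (SE) inside each factor along this path; the intersection $\covectors'\cap\covectors''$, which is itself a COM by~(2), supplies the join between consecutive segments. The main obstacle is guaranteeing that the resulting $Z$ agrees with $X\circ Y$ on $U\setminus S(X,Y)$ — the shortest-path hypothesis in~(4) is exactly what prevents spurious separator coordinates from appearing as the construction crosses the intersection.

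For part two, I would induct on the number of maximal faces of $\covectors$. In the inductive step, choose a facet $\F$ of $\covectors$, set $\covectors':=\F$, and let $\covectors''$ be the union of all other faces; I would verify conditions~(1)--(4) by using the gatedness of faces (Lemma~\ref{face-gated}) for~(1)--(3), and by using that $G(\covectors)$ is a partial cube with convex halfspaces (Theorem~\ref{Djokovic}) to extract the shortest path required by~(4). The first part of the proposition then identifies $\covectors$ as the COM-amalgam of $\F$ with $\covectors''$, and iterating on $\covectors''$ completes the decomposition. The additional hurdle here is showing that $\covectors''$ is a bona fide COM, which again rests on the gated structure of COM faces.
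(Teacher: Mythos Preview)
The paper does not prove this proposition: it is quoted verbatim from~\cite[Proposition~7 \& Corollary~2]{BaChKn} and used as a black box. So there is no ``paper's own proof'' to compare against, and what follows is an assessment of your sketch on its own merits.

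Your overall architecture is sound, but there are two concrete gaps. First, in the (FS) argument your use of an auxiliary $W\in\covectors'\cap\covectors''$ does not do what you claim: applying (FS) in $\covectors''$ to $Y$ and $W$ yields $Y\circ -W$, which agrees with $Y$, not $-Y$, on $\underline{Y}$; nothing forces it to match $-Y$ on $X^0$. The clean fix bypasses $W$ entirely: by condition~(3), $X\circ Y\in\covectors'$, and then (FS) inside $\covectors'$ applied to $X$ and $X\circ Y$ gives $X\circ -(X\circ Y)=X\circ -Y\in\covectors'$. Second, and more seriously, condition~(4) is only stated for pairs with $X^0=Y^0$, whereas (SE) must be verified for arbitrary $X,Y$. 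You need a preliminary reduction---typically replacing $X,Y$ by $X\circ Y$ and $Y\circ X$, which share the same zero set and the same separator---before the shortest-path argument can even be invoked; and then one still has to lift the eliminating covector back from the deletion, which is where condition~(2) on $\covectors'\cap\covectors''$ actually does work.

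For the decomposition part, peeling off one facet $\F$ and declaring $\covectors''$ to be ``the union of all other faces'' is too coarse: you have not explained why this $\covectors''$ satisfies (SE), nor why $\covectors'\cap\covectors''=\F\cap\covectors''$ is a COM (condition~(2)). Gatedness of $\F$ helps with~(3), but (SE) for $\covectors''$ is a genuine obligation that your sketch does not discharge. In~\cite{BaChKn} the decomposition is organised differently (via carriers and halfspaces, which are already known to be COMs by Proposition~\ref{carriers}), precisely to avoid having to prove from scratch that an ad hoc union of faces is a COM.
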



We will not make use of the following and state it here without proof just for completeness:
\begin{corollary} \label{amalgamAMP}
	The COM-amalgam of two AMPs $\covectors',\covectors''$ such that
	$\covectors'\cap \covectors''$ is ample is an AMP. Any AMP which is not a cube is obtained via
	successive AMP-amalgams from its maximal faces.
\end{corollary}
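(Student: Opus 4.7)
For the first assertion, Proposition~\ref{amalgamCOM} immediately gives that $\covectors:=\covectors'\cup\covectors''$ is a COM, using only that $\covectors'\cap\covectors''$ is a COM (which it is, being ample by hypothesis). To check (IC) in $\covectors$: for $X\in\covectors$ and $Y\in\{\pm 1,0\}^U$ with $X\le Y$, without loss of generality $X\in\covectors'$, whence $Y\in\covectors'\subseteq\covectors$ by (IC) for the AMP $\covectors'$.

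For the second assertion my plan is induction on the number of maximal faces of $\covectors$. A key preliminary: in an AMP the face $\F(X)$ coincides with the cube $\{Z:X\le Z\}$, since (IC) forces $Z\in\covectors$ and $Z=X\circ Z\in\F(X)$ whenever $X\le Z$; hence the maximal faces of $\covectors$ are precisely its maximal cubes, each of which is itself an AMP. If $\covectors$ has a single maximal face, it is a cube and the statement is trivial. Otherwise, the plan is to peel off one maximal cube $\C(X_0)$: set $\covectors''=\C(X_0)$ and $\covectors'=\bigcup_{X_1\ne X_0}\C(X_1)$, the sub-COM spanned by the remaining maximal cubes.

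The claim is that $(\covectors',\covectors'')$ is an AMP-amalgam. Ampleness of each piece is inherited from $\covectors$: for $X\in\covectors'$ one has $X\ge X_1$ for some $X_1\ne X_0$, so any $Y\ge X$ lies in $\C(X_1)\subseteq\covectors'$; for $X\in\covectors'\cap\covectors''$ one has $X\ge X_0$ and $X\ge X_1$ with $X_1\ne X_0$, so $Y\ge X$ lies in $\C(X_0)\cap\C(X_1)\subseteq\covectors'\cap\covectors''$; and $\covectors''=\C(X_0)$ is trivially ample. Once each piece is known to be a COM, all three are therefore AMPs and the intersection is ample. Induction on $\covectors'$ (which has strictly fewer maximal cubes) combined with the first assertion then yields the desired AMP-amalgam decomposition.

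The main obstacle is verifying that the peel-off pair $(\covectors',\covectors'')$ actually satisfies the four COM-amalgam conditions of Proposition~\ref{amalgamCOM}. Conditions~(1) and~(3) follow quickly from the cube structure, and condition~(2) from the inherited (IC). Condition~(4) is the delicate one: for each $X\in\covectors'\setminus\covectors''$ and $Y\in\covectors''\setminus\covectors'$ with $X^0=Y^0$, one must exhibit a shortest path in the tope graph of the deletion $\covectors\setminus X^0$ between appropriate topes. I would establish this using the gatedness of faces (Lemma~\ref{face-gated}), the distinct-$\Theta$-class characterization of shortest paths (Lemma~\ref{path-theta}), and the geodesic gallery characterization of AMPs (Proposition~\ref{ample-gallery}), which ensures that parallel cubes of an AMP are connected by a gallery of cubes realizing their distance.
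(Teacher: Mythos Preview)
The paper explicitly leaves this corollary unproved (``we state it here without proof just for completeness''), so there is no original argument to compare against. Your proof of the first assertion is correct; indeed, the extra hypothesis that $\covectors'\cap\covectors''$ be ample is not actually used there, since (IC) passes to $\covectors=\covectors'\cup\covectors''$ directly from (IC) in $\covectors'$ and $\covectors''$ once Proposition~\ref{amalgamCOM} certifies $\covectors$ as a COM.

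Your plan for the second assertion, however, has a genuine gap: peeling off an \emph{arbitrary} maximal cube $\C(X_0)$ can leave a remainder $\covectors'=\bigcup_{X_1\ne X_0}\C(X_1)$ that is not a COM at all, so the COM-amalgam conditions are not even on the table. For a concrete failure, let $\covectors$ be the AMP whose tope graph is the path $P_4$, realized on $U=\{1,2,3\}$ with minimal covectors $A=(-1,-1,0)$, $B=(-1,0,+1)$, $C=(0,+1,+1)$. Peel off the middle cube $\C(B)$; then $\covectors'=\C(A)\cup\C(C)$ contains the topes $X=(-1,-1,+1)$ and $Y=(-1,+1,+1)$, but the unique eliminant for $(X,Y)$ on coordinate~$2$ is $B=(-1,0,+1)\notin\covectors'$, so (SE) fails in $\covectors'$. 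The same pair witnesses failure of (SE) in $\covectors'\cap\covectors''=\{X,Y\}$, so condition~(2) is also false. Your inherited-(IC) check shows $\covectors'$ and $\covectors'\cap\covectors''$ are up-sets, but that alone does not make them COMs; this is exactly the step that breaks, and no amount of work on condition~(4) can rescue it.

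A clean repair is to let Proposition~\ref{amalgamCOM} choose the decomposition rather than devising your own peel-off. Since an AMP is in particular a COM, that proposition already furnishes a sequence of COM-amalgams building $\covectors$ from its maximal faces, with both pieces and (by condition~(2)) their intersection guaranteed to be COMs at every stage. Each intermediate piece is, by construction, a union of some maximal cubes of $\covectors$, hence an up-set; your (IC)-inheritance argument then upgrades every such COM, and every intersection, from COM to AMP, which is precisely what the second assertion asks.
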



Now, we present a notion of AMP-amalgam formulated in terms of  graphs. We say
that a graph $G$ is an \emph{AMP-amalgam} of $G_1$ and $G_2$ if  $(G_1,G_1\cap
G_2,G_2)$ is an isometric cover of $G$ and $G_1,G_2,$ and $G_0=G_1\cap G_2\ne
G_1,G_2$ are ample partial cubes. The main difference between this and
COM-amalgams is that condition (4) in the definition of a COM-amalgam is
replaced by the weaker condition that $G$ is a partial cube. The next result
from \cite{BaChDrKounp} was never published:

\begin{proposition} \label{AMPamalgam} \cite{BaChDrKounp}
	Let $G$ be a subgraph of the hypercube $Q_m$ which is an AMP-amalgam of two
	ample isometric subgraphs $G_1$ and $G_2$ of $Q_m$. If $G$ is an isometric
	subgraph of $Q_m$, then $G$ is ample. Any ample partial cube can be
	obtained by AMP-amalgams from its facets.
\end{proposition}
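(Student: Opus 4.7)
My plan is to verify the gallery characterization of Proposition~\ref{ample-gallery}: every pair of parallel $X$-cubes $Q',Q''$ in $G$ must be joined inside $G$ by a geodesic gallery. If $V(Q')\cup V(Q'')\subseteq V(G_i)$ for some $i\in\{1,2\}$, ampleness of $G_i$ supplies a gallery of length $d_{G_i}(Q',Q'')$, which coincides with $d_G(Q',Q'')$ since both $G_i$ and $G$ are isometric in $\Q_m$; hence the gallery is geodesic in $G$. The genuinely new case is that in which $Q'$ has a vertex in $V(G_1)\setminus V(G_2)$ and $Q''$ has a vertex in $V(G_2)\setminus V(G_1)$, so the two cubes straddle the separator $V(G_0)$.

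In this straddling case my goal is to exhibit an $X$-cube $R$ with $V(R)\subseteq V(G_0)$ whose type $\pi_X(R)\in V(\Q_{m-|X|})$ lies on a shortest path from $y':=\pi_X(Q')$ to $y'':=\pi_X(Q'')$ in the contracted hypercube $\Q_{m-|X|}$. Given such an $R$, a geodesic gallery $Q'\to R$ produced by ampleness of $G_1$ followed by a geodesic gallery $R\to Q''$ produced by ampleness of $G_2$ would give a gallery in $G$ of length $d(Q',R)+d(R,Q'')=d(Q',Q'')$, hence geodesic in $G$. To locate $R$, I would take corresponding closest vertices $u'\in Q'$ and $u''\in Q''$ with $d(u',u'')=d(Q',Q'')$; isometry of $G$ in $\Q_m$ yields a shortest $u',u''$-path $P$ in $G$ whose edges all lie in $\Theta$-classes outside $X$ (by Lemma~\ref{path-theta}), and $P$ must cross $V(G_0)$ because this set separates $V(G_1)\setminus V(G_2)$ from $V(G_2)\setminus V(G_1)$ in $G$. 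Any crossing vertex $w\in P\cap V(G_0)$ then marks a candidate type $\pi_X(w)$ sitting on the interval between $y'$ and $y''$ in $\Q_{m-|X|}$.

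The main obstacle is upgrading the single vertex witness $w\in V(G_0)$ to a full $X$-cube $R\subseteq V(G_0)$ at type $\pi_X(w)$, since $X$ need not be strongly shattered by $G_0$ even when it is strongly shattered by both $G_1$ and $G_2$. My strategy to overcome this is to use ampleness of $G_1$ to slide $Q'$ through a geodesic gallery inside $G_1$ toward $V(G_0)$ until it reaches an $X$-cube $R_1\subseteq V(G_0)$, symmetrically to slide $Q''$ inside $G_2$ to $R_2\subseteq V(G_0)$, and then to use ampleness of $G_0$ to connect $R_1$ and $R_2$ by a geodesic gallery inside $G_0$. Verifying that the three sub-galleries can be chosen so that their total length equals $d(Q',Q'')$ rather than only an upper bound on it is the delicate point, and this is exactly where the isometry hypothesis on $G$ in $\Q_m$ (weakening axiom~(4) of COM-amalgams) has to be used decisively. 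I would set the step up as an induction on the ambient dimension $m$, reducing to the same statement for AMP-amalgams inside a proper subcube $C\subsetneq \Q_m$ obtained by restricting $G,G_1,G_2,G_0$ to $C$, since these restrictions inherit ampleness and form an AMP-amalgam in $C$ by closure of ample sets under intersection with cubes.

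For the second assertion, that every ample partial cube is obtained by iterated AMP-amalgams from its facets, I would apply Proposition~\ref{amalgamCOM} to the ample COM $\covectors$ whose tope graph is $G$: every COM is an iterated COM-amalgam of its maximal faces. By Lemma~\ref{face-OM} each face is an OM; by Lemma~\ref{face-gated} it is gated in $G$ and, by closure of ampleness under intersection with a cube, each face is an ample OM. An ample OM is forced to be a cube: antipodality (axiom~(Sym)) makes its tope set closed under the antipode map of its ambient cube, and Lawrence's characterization applied to that ambient cube then forces the tope set to fill it entirely. Thus the facets of $G$ are cubes, hence trivially ample, and at every step of the amalgam decomposition the intersection $\covectors'\cap\covectors''$ appears as a sub-AMP of $G$, itself ample by induction on the construction, so each COM-amalgam step is an AMP-amalgam in the graph sense defined above, yielding the desired decomposition of $G$ from its cube facets.
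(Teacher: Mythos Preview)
Your overall plan—verify Proposition~\ref{ample-gallery} by producing a geodesic gallery, handling the ``straddling'' case via an $X$-cube inside $G_0$—is exactly the paper's strategy. However, two concrete gaps remain.

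\textbf{Gap 1: cubes lie on one side.} You move directly from ``$Q'$ has a vertex in $V(G_1)\setminus V(G_2)$'' to ``slide $Q'$ through a geodesic gallery inside $G_1$,'' tacitly assuming $V(Q')\subseteq V(G_1)$. This is not automatic. The paper first proves, by induction on $|X|$, that every $X$-cube of $G$ lies entirely in $G_1$ or in $G_2$: if a cube had vertices $s\in V(G_1)\setminus V(G_2)$ and $t\in V(G_2)\setminus V(G_1)$, then every facet through $s$ (resp.\ $t$) would lie in $G_1$ (resp.\ $G_2$) by induction, forcing $s,t$ antipodal and all other vertices into $G_0$; but a cube minus two antipodes is not ample, contradicting ampleness of $G_0$. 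Without this lemma your sliding step is not well-posed.

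\textbf{Gap 2: existence of an $X$-cube in $G_0$.} Your sliding argument presupposes that $G_0$ contains an $X$-cube $R_1$ to slide to; ampleness of $G_1$ only connects parallel cubes already present in $G_1$, it does not manufacture one in $G_0$. This is the heart of the matter, and your induction on the ambient dimension $m$ is too vague to supply it: restricting to a proper subcube $C$ yields by induction only that $G\cap C$ is ample, which does not hand you a gallery through $G_0$. The paper's fix is an induction on $k=|X|$ (not on $m$) with a strengthened conclusion: the geodesic gallery between $Q_1\subseteq G_1$ and $Q_2\subseteq G_2$ can be chosen to contain an $X$-cube of $G_0$. For $k=0$ this is your vertex $w$. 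For $k>0$, pick $e\in X$, set $X'=X\setminus\{e\}$, and apply the hypothesis in each halfspace $G^+,G^-$ to the $X'$-halves $Q_1^\pm,Q_2^\pm$, obtaining $X'$-cubes $R^+\subseteq G_0\cap G^+$ and $R^-\subseteq G_0\cap G^-$. Now ampleness of $G_0$ connects $R^+$ to $R^-$ by a geodesic gallery inside $G_0$; two consecutive $X'$-cubes of this gallery lying in opposite $e$-halfspaces form the desired $X$-cube $Q\subseteq G_0$. The verification that the concatenated gallery $Q_1\to Q\to Q_2$ is geodesic then uses convexity of intervals (Lemma~\ref{convex-intervals}) applied to well-chosen representative vertices—this is where isometry of $G$ enters decisively, as you anticipated.

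Your treatment of the second assertion (decomposition into facets via Proposition~\ref{amalgamCOM}, together with the observation that an ample OM must be a cube) is sound and in fact more detailed than the paper, which does not spell out this direction.
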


\begin{proof}
	First we assert that any $X$-cube $Q$ of $G$ is contained either in $G_1$
	or in $G_2$. We proceed by induction on $k:=|X|$. Since $G_0=G_1\cap G_2$
	is a separator, the assertion holds when $k=1$. 
	Suppose the assertion is true for all $X'\subset U$ with $|X'|<k$ and
	suppose that the $X$-cube $Q$ of $G$ contains two vertices $s\in
	V(G_1)\setminus V(G_2)$ and $t\in V(G_2)\setminus V(G_1)$. By induction
	hypothesis,  any facet of $Q$ containing $s$ must be included in $G_1$ and
	any facet of $Q$ containing $t$ must be included in $G_2$. From this we
	conclude that all vertices of $Q$ except $s$ and $t$ (which must be
	opposite in $Q$) belong to $G_0$. This is impossible since $G_0$ is ample.
	
	By Proposition \ref{ample-gallery}, we must show that any two  $X$-cubes
	$Q_1,Q_2$ of  $G$ can be connected by a geodesic gallery.   Since $G_1$ and
	$G_2$ are ample, this is  true when $Q_1$ and $Q_2$ both belong  to $G_1$
	or to $G_2$. By previous assertion we can suppose that $Q_1\subseteq G_1$
	and $Q_2\subseteq G_2$.  By induction on $k=|X|$ we prove that $Q_1$ and
	$Q_2$ can be connected by a geodesic gallery containing an $X$-cube of
	$G_0$. If $k=0$, then $Q_1,Q_2$ are vertices of $G$ separated by $G_0$ and
	we are done. 
	So, let $k>0$. Pick any $e\in X$, set $X':=X\setminus \{ e\}$,  and let
	$G^+,G^-$ be the halfspaces of $G$ defined by $e$. Let $Q^+_1,Q^-_1$ and
	$Q^+_2,Q^-_2$ be the intersections of $Q_1$ and $Q_2$ with the  halfspaces.
	By induction hypothesis, $Q^+_1,Q^+_2$ can be connected by a geodesic
	gallery $P(Q^+_1,Q^+_2)$ containing an $X'$-cube $R^+$ in $G_0$ and
	$Q^-_1,Q^-_2$ can be connected by a geodesic gallery $P(Q^-_1,Q^-_2)$
	containing an $X'$-cube $R^-$ in $G_0$. Hence
	$d(Q^+_1,Q^+_2)=d(Q^+_1,R^+)+d(R^+,Q^+_2)$ and
	$d(Q^-_1,Q^-_2)=d(Q^-_1,R^-)+d(R^-,Q^-_2)$. Since $G^+$ and $G^-$ are
	convex subgraphs of $G$, $P(Q^+_1,Q^+_2)\subseteq G^+$ and
	$P(Q^-_1,Q^-_2)\subseteq G^-$. Since $G_0$ is ample, the $X'$-cubes $R^+$
	and $R^-$ can be connected in $G_0$ by a geodesic gallery.  Since
	$R^+\subseteq G^+$ and $R^-\subseteq G^-$, on this gallery we can find two
	consecutive $X'$-cubes $Q^+\subseteq G^+$ and $Q^-\subseteq G^-$ so that
	$Q=Q^+\cup Q^-$ is an $X$-cube of $G_0$.
	
	Since $Q_1$ and $Q$ are two $X$-cubes of AMP $G_1$, they can be connected
	in $G_1$ by a geodesic gallery $P(Q_1,Q)$. Analogously, $Q$ and $Q_2$ can
	be connected in $G_2$ by a geodesic gallery $P(Q,Q_2)$. We assert that the
	concatenation of the two galleries is a geodesic gallery $P(Q_1,Q_2)$
	between $Q_1$ and $Q_2$, i.e.,  $d(Q_1,Q_2)=d(Q_1,Q)+d(Q,Q_2)$.
	Since $d(Q_1^+,Q_2^+)=d(Q_1^-,Q_2^-)=d(Q_1,Q_2)$, it suffices to show that
	$d(Q_1^+,Q_2^+)=d(Q_1^+,Q^+)+d(Q^+,Q_2^+)$ and
	$d(Q_1^-,Q_2^-)=d(Q_1^-,Q^-)+d(Q^-,Q_2^-)$.
	
	In each $Q_1^+,Q_1^-,Q^+_2,Q^-_2,R^+,R^-$ pick a vertex, say $q_1^+\in
	Q_1^+,q_1^-\in Q_1^-,q_2^+\in Q^+_2, q_2^-\in Q^-_2,r^+\in R^+,r^-\in R^-$,
	such that each pair of vertices realizes the distance between the
	corresponding cubes. Then $d(q_1^+,q_1^-)=d(q_2^+,q_2^-)=1$ and
	$d(q_1^+,q_2^+)=d(q_1^+,r^+)+d(r^+,q_2^+)$ and
	$d(q_1^-,q_2^-)=d(q_1^-,r^-)+d(r^-,q_2^-)$.
	Let $q^+$ and $q^-$ be two vertices of $Q^+$ and $Q^-$, respectively,
	belonging to a shortest $(r^+,r^-)$-path. Again, $d(q^+,q^-)=1$.
	Consequently, in $G$ we have $r^+,r^-\in I(q_1^+,q_2^-)$ and $q^+,q^-\in
	I(r^+,r^-)$. Since $G$ is a partial cube, the interval $I(q_1^+,q_2^-)$ is
	convex (Lemma \ref{convex-intervals}), thus $q^+$ and $q^-$ belong to a
	common shortest path between $q_1^+$ and $q_2^-$. Applying the same
	argument, we deduce that $q^-$ and $q^+$ belong to a common shortest path
	between $q_1^-$ and $q_2^+$. Hence
	$d(q_1^+,q_2^+)=d(q_1^+,q^+)+d(q^+,q_2^+)$ and
	$d(q_1^-,q_2^-)=d(q_1^-,q^-)+d(q^-,q_2^-)$, establishing that
	$d(Q^+_1,Q^+_2)=d(Q_1^+,Q^+)+d(Q^+,Q_2^+)$ and
	$d(Q^-_1,Q^-_2)=d(Q_1^-,Q^-)+d(Q^-,Q_2^-)$. Consequently,
	$d(Q_1,Q_2)=d(Q_1,Q)+d(Q,Q_2)$, i.e., $P(Q_1,Q_2)$ is a geodesic gallery.
\end{proof}

\subsubsection{VC-dimension}

The VC-dimension of OMs, COMs, and AMPs (all viewed as partial cubes) can be expressed in the following way. Subsequently, we will use this fundamental
lemma  without explicitly mentioning it.

\begin{lemma} \label{VCdimOM}
	If $G$ is the tope graph of an OM $\covectors$, then $\vcd(G)=\rk(\covectors)$. If $G$ is the tope graph of a
	COM $\covectors$, then $\vcd(G)$ is the largest VC-dimension among tope graphs of faces of $\covectors$. If $G$ is the tope graph of an AMP $\covectors$, then $\vcd(G)$ is the largest dimension among cubes of
	$G$.
\end{lemma}

\begin{proof}
	Let $G$ be the tope graph of an OM $\covectors$. We have to prove that
	$\vcd(G)=\rk(\covectors)$. If $G=\Q_m$, then $\covectors=\{-1,0,+1\}^m$ has rank $m$
	and the equality holds. Thus, let $G$ be not a cube.  First we show
	$\vcd(G)\leq\rk(\covectors)$. Since $G$ is not a cube, it contains a $\Theta$-class
	$E_i$ whose contraction does not decrease the VC-dimension. If we set
	$G':=\pi_i(G)$ and  $\covectors':=\covectors\backslash \{ i\}$, then $\vcd(G')=\vcd(G)$ and $G'$ is the tope graph of $\covectors'$
(Lemma \ref{lem:minorclosed}). 
	Since $\rk(\covectors')\leq\rk(\covectors)$, by induction hypothesis, $\vcd(G)=\vcd(G')\leq
	\rk(\covectors')\leq\rk(\covectors)$.
	
	To prove $\rk(\covectors)\leq\vcd(G)$, we contract any $\Theta$-class $E_i$ and set
	$G'=\pi_i(G)$ and $\covectors'=\covectors\backslash \{ i\}$. By induction hypothesis,
    $\rk(\covectors')\leq\vcd(G')\leq \vcd(G)$.
	Thus, if $\rk(\covectors')=\rk(\covectors)$ or $\vcd(G')=\vcd(G)-1$, then we are obviously
	done. Thus suppose, that for any $\Theta$-class $E_i$ and
	$G'=\pi_i(G)$, we have $\rk(\covectors')=\rk(\covectors)-1$ and $\vcd(G')=\vcd(G)$.
	
	If a $\Theta$-class $E_i$ of $G$ crosses the faces $\F(X)$ of all
	cocircuits $X\in\cocircuits$, then $\covectors$ is not simple. Therefore,
	for any cocircuit $X\in\cocircuits$ there is a $\Theta$-class $E_i$ not
	crossing $\F(X)$. However, since when we contract $E_i$ the rank decreases
	by 1, we conclude that the resulting OM coincides with $F(X)$. Indeed,
	after contraction the rank of $\F(X)$ remains the same. Hence, if $X$ would
	remain a cocircuit the global rank would not decrease. Consequently, $G'$
	is the tope graph of $\F(X)$. Thus, $G$ and $G_i^+=F(X)$ are antipodal
	partial cubes and $G_i^+$ is gated (the latter because it is a face of
	$G$). Since $G$ is antipodal, $G_i^-\cong G_i^+$ is antipodal as well.
	Since we are in a COM, $G_i^-$ is also a gated subgraph of $G$
	by~\cite{KnMa}. Since all $\Theta$-classes of $G_i^+,G_i^-$ coincide, the
	path from any vertex in $G_i^+$ to its gate in $G_i^-$ consists of an edge
	from $E_i$, and vice versa. Thus, $G\cong G_i^+\product K_2$. From the next
	claim we obtain that $G$ must be a cube, contrary to our assumption.
	
		\begin{claim}
			If $G$ is a partial cube and $G\cong G_i^+\product K_2$ for any
			$\Theta$-class $E_i$, then $G$ is a hypercube.
		\end{claim}
		
		\begin{proof}
			First, note that any two $\Theta$-classes $E_i,E_j$ of $G$ must
			\emph{cross}, i.e., $G_i^+\cap G_j^+,G_i^-\cap G_j^+,G_i^+\cap
			G_j^-,G_i^-\cap G_j^-\neq \emptyset$. Indeed, since $G\cong
			G_i^+\product K_2$, after contracting $E_i$ we get a
			graph isomorphic to $G_i^+$ and  $G^-_i$, which has the same
			$\Theta$-classes as $G$ except $E_i$. This implies that any other
			class $E_j$ crosses both $G^+_i$ and $G^-_i$.
			We assert that $G_i^+$ satisfies the hypothesis of the claim. For
			each vertex in the halfspace $G_i^+\cap G_j^+$ of $G^+_i$ defined
			by $E_j$ its unique neighbor with respect to the factorization
			$G\cong G_j^+\product K_2$ is in $G_i^+\cap G_j^-$ and vice versa.
			Therefore, $G_i^+\cong (G_i^+\cap G_j^+)\product K_2$. The same
			holds for $G_i^-$. By induction assumption,  $G_i^+,G_j^+$ are
			hypercubes. Consequently, $G$ is the Cartesian product of a
			hypercube with an edge, whence a hypercube itself.
		\end{proof}
	
	That the VC-dimension of the tope graph of a COM is attained by a face  is proved
	in~\cite[Lemma 42]{ChKnPh}. This also implies the result for AMPs. For
	AMPs, this also follows from the equality $\uX(G)=\oX(G)$. The equality for
	OMs is stated in \cite{KnMa} with a reference
	to~\cite{daS95}.
\end{proof}

\begin{rexample}
The running example $M$ is the COM-amalgam of 12 maximal faces: ten $C_4$, one $C_8$, and one rhombododecahedron (see Fig.
\ref{fig:prism_completion}(b)). By Lemma \ref{VCdimOM}, $\vcd(M)=\max\{
\vcd(C_4),\vcd(C_8),\vcd{D}\}=\max\{2,3\}=3$.
\end{rexample}


\section{Ample completions of OMs}
The goal of this section is to prove the following result :

\begin{theorem} \label{OMtoAMP} Let $\covectors$ be an oriented matroid of rank $d$ and $G$ its tope graph,
which henceforth is of VC-dimension $d$. Then $G$ can be completed to an ample partial cube $\ac(G)$ of VC-dimension~$d$.
\end{theorem}

This completion is done in two steps. First, we use the known
result that any OM can be completed to a UOM of the same rank. Consequently,
the tope graph of any OM can be completed to a tope graph of a UOM of the same
VC-dimension. Second, we recursively complete the tope
graph of any UOM to an ample partial cube of the same VC-dimension.


\begin{example}
The prism $\Pi=C_6 \product P_2$ is the tope graph of an OM and is a proper isometric subgraph of $Q_4$. Contracting any
$\Theta$-class of $\Pi$, except the vertical one, results into $Q_3$, thus $\vcd(\Pi)=3$. The rhombododecahedron $D$ is obtained as a UOM-completion of
$\Pi$.  In Fig. \ref{fig:prism_completion}(c) we present an ample completion of $D$ (and thus of $\Pi$)
obtained as in the proof of Lemma \ref{lem:UOMtoAOM}: first, the $\Theta$-class of vertical edges of $D$  are contracted to obtain the 3-cube $Q_3$.
At the second stage,  an ample completion of $D$ is obtained by performing an ample expansion of $Q_3$ along a $Q^-_3$ ($Q_3$ minus a vertex).
\end{example}

\begin{figure}[htb]
	\centering
	\includegraphics[width=.75\textwidth]{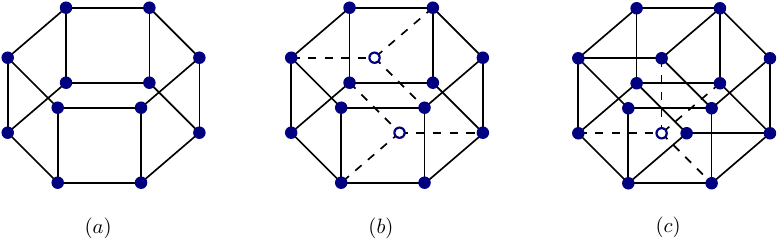}
	\caption{(a) The prism $\Pi$. (b) $D$ as a UOM-completion of $\Pi$. (c) The ample
	completion of $D$ and $\Pi$.}
	\label{fig:prism_completion}
\end{figure}

\subsection{UOMs: uniform oriented matroids}\label{sub:UOM}


In~\cite[Proposition 2.2.4]{BjLVStWhZi} it is stated that the combinatorial
types of cubical zonotopes, i.e., zonotopes in which all proper faces are
cubes~\cite{She}, are in one-to-one correspondence with realizable uniform
matroids (up to reorientation).
A way of generalizing this to general UOMs is basically due to
Lawrence~\cite{La}, also see~\cite[Exercise 3.28]{BjLVStWhZi}. It has been
restated in terms of tope graphs in~\cite{KnMa1}: the tope graphs of UOMs correspond to antipodal
partial cubes in which all proper antipodal subgraphs are cubes. Let us give a
proof.

\begin{lemma}\label{lem:defUOM}
	For the tope graph $G$ of an OM $\covectors$, the following conditions are equivalent:
	\begin{itemize}
		\item[(i)] $G$ is the tope graph of a UOM;
		\item[(ii)] all proper faces, (i.e., all proper antipodal subgraphs,)
		of $G$ are hypercubes;
		\item[(iii)] all halfspaces (equivalently, all half-carriers) of $G$ are ample
		partial cubes.
	\end{itemize}
\end{lemma}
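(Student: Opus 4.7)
The plan is to prove the cyclic chain of implications (i) $\Rightarrow$ (ii) $\Rightarrow$ (iii) $\Rightarrow$ (i). Two preparatory observations organize the argument. First, axiom (IC) asserts $\uparr\covectors=\covectors$, which is readily seen to be equivalent to the condition that every face $\F(X)$ coincides with the full subcube $\C(X)$ of dimension $|X^0|$; hence a COM is an AMP if and only if all of its faces are hypercubes. Second, the face $\F(X)$ viewed as an OM on the ground set $X^0$ is exactly the deletion $\covectors\setminus\underline{X}$, so its underlying unoriented matroid is the restriction $M|_{X^0}$ of the underlying matroid $M$ of $\covectors$.

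For (i) $\Rightarrow$ (ii), suppose $\covectors$ is a UOM of rank $r$, i.e., $M=U_{r,m}$. For any proper covector $X\ne\mathbf 0$ one has $|X^0|\le r-1<r$, so $M|_{X^0}$ is the free matroid on $X^0$; consequently $\F(X)$ is the free OM, whose tope graph is the hypercube $\Q_{|X^0|}$. Thus every proper face is a hypercube.

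For (ii) $\Rightarrow$ (iii), the key technical step is to verify that for every $X\in\covectors_e^+$ the face inside the halfspace agrees with the face inside $\covectors$, that is $\F_{\covectors_e^+}(X)=\F_\covectors(X)$. The inclusion $\subseteq$ is trivial; for $\supseteq$, given $Y\in\covectors$ put $Y':=X\circ Y\in\covectors$. Then $Y'_e=X_e\ne 0$, hence $Y'\in\covectors_e^{X_e}=\covectors_e^+$, and $X\circ Y'=X\circ Y$. Consequently every face of the halfspace is a proper face of $\covectors$ sitting inside $\covectors_e^+$, so by (ii) it is a hypercube; the preparatory equivalence then yields that $\covectors_e^+$ is an AMP. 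The ``equivalently, all half-carriers'' clause is automatic for OMs, since $\mathbf 0\in\covectors$ and $\F(\mathbf 0)=\covectors$ imply $N(\covectors_e^0)=\covectors$, so each half-carrier coincides with the corresponding halfspace.

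For (iii) $\Rightarrow$ (i), take any cocircuit $X\in\cocircuits$ and pick $e\in\underline X$; then $X\in\covectors_e^{X_e}$, and the face-equality above combined with the AMP hypothesis forces $\F(X)$ to be a hypercube. Simplicity of $\covectors$ is inherited by $\F(X)$, and in a simple OM the number of ground-set elements equals the number of $\Theta$-classes of the tope graph; therefore the cube $\F(X)$ has dimension exactly $|X^0|$. Since $X$ is an atom of $\mathcal F_{\mathrm{big}}(\covectors)$, the OM $\F(X)$ has rank $r-1$, giving $|X^0|=r-1$ for every cocircuit, i.e., every hyperplane of $M$ has size $r-1$. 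A short matroidal argument now finishes: if some $r$-subset $S$ were dependent, then $\mathrm{cl}(S)$ would be contained in a hyperplane of size $\ge r>r-1$, a contradiction; so every $r$-subset is a basis, $M=U_{r,m}$, and then (Inc) and (Sym) force $\cocircuits$ to consist of exactly the two opposite signings of each $(m-r+1)$-subset, proving $\covectors$ is a UOM. The principal obstacle is the bookkeeping around the three identifications face $\equiv$ deletion $\equiv$ matroid restriction and $\F_\covectors(X)=\F_{\covectors_e^+}(X)$; once these are set up, each implication reduces to a short matroidal or compositional verification.
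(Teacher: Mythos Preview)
Your proof is correct and follows the same cyclic structure as the paper, but the individual implications are argued along genuinely different lines. The paper proves (i)$\Rightarrow$(ii) directly from the cocircuit definition of a UOM by observing $\covectors=\uparr(\covectors\setminus\{\mathbf{0}\})$, proves (iii)$\Rightarrow$(ii) by contraposition (a non-cube face sits in some halfspace and violates (IC) there), and then combines (ii)\&(iii)$\Rightarrow$(i) using the VC-dimension machinery of Lemma~\ref{lem:antipodal}. You instead route everything through the identification $\F(X)|_{X^0}=\covectors\setminus\underline{X}$ and its underlying matroid $M|_{X^0}$: (i)$\Rightarrow$(ii) becomes ``restricting $U_{r,m}$ to $\le r-1$ elements is free,'' and (iii)$\Rightarrow$(i) is obtained directly by reading off $|X^0|=r-1$ for every cocircuit from the gradedness of $\mathcal{F}_{\mathrm{big}}$ and then forcing $M=U_{r,m}$ by the hyperplane-size argument. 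Your approach is more self-contained within oriented/unoriented matroid theory and avoids the appeal to Lemma~\ref{lem:antipodal}; the paper's approach stays closer to the tope-graph/partial-cube language used throughout and avoids the (standard but not entirely trivial) bookkeeping around the face--deletion identification. The explicit verification that $\F_{\covectors_e^+}(X)=\F_\covectors(X)$ and that half-carriers equal halfspaces in an OM are nice details the paper leaves implicit.
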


\begin{proof}
	(i)$\Rightarrow$(ii)
	By definition of a UOM of rank $r$, the cocircuits $\mathcal{C}^*$ are exactly
	orientations of sets of support of size $m-r+1$. In an OM the
	$\covectors$ can be obtained from $\cocircuits$ by taking all possible
	compositions. Thus, $\covectors$ consists of all possible sign
	vectors $Y\in\{+,-,0\}^U$ with $X\leq Y$ for some $X\in \cocircuits$. In
	other words, in a UOM we have $\covectors=\uparr \cocircuits=\uparr
	(\covectors\setminus\{\bf{0}\})$. In particular, for every face $\F(Y),
	Y\ne \{\bf{0}\}$ of $\covectors$, we have that $\uparr \F(Y)$  is in
	$\covectors$, thus $\F(Y)$ is a hypercube.
	
	(ii)$\Rightarrow$(iii): Since any OM is a COM, by \cite{BaChKn}, any
	halfspace and any half-carrier of $G$ is the tope graph of a COM. Since all faces of this
	halfspace (respectively, half-carrier) are cubes, this COM satisfies the
	ideal composition axiom (IC) and thus is ample.
	
	(iii)$\Rightarrow$(ii): Suppose that some proper face $\F(X)$ of $G$ is not
	a cube. Then there exists a $\Theta$-class $E_i$ such that $\F(X)$ is
	contained in one of the halfspaces $G^-_i$ or $G^+_i$, say $\F(X)\subseteq
	G^+_i$. Then $\F(X)$ is a face of $G^+_i$, thus $G^+_i$ does not satisfies
	(IC), thus is not ample. 
	
	(ii)$\&$(iii)$\Rightarrow$(i): Let $G$ be the tope graph of an OM $\covectors$ of rank $r$ such that every
	proper face is a hypercube and all halfspaces are AMPs. Since all maximal
	antipodal subgraphs of the halfspace of the tope graph of an OM have the same VC-dimension
	and the VC-dimension of a hypercube is its dimension, all cocircuits $X$ of $\covectors$
	have the same support size. Since $G$ is antipodal
	by Lemma \ref{lem:antipodal}, 
	$\vcd(G)$ is one more than the VC-dimension of $F(X)$. Thus, all $X$ have
	support of size $m-r+1$. This implies, that every set of size $m-r+1$ is
	the support of a cocircuit, since otherwise it has to be in containment
	relation with some cocircuit $X$, which then contradicts the support size
	property.
\end{proof}

\begin{corollary}\label{lem:halfUOM}
	If $G$ is the tope graph of a UOM $\covectors$, $\vcd(G)=d$, and $G'$ is a proper convex subgraph of
	$G$, then $G'$ is ample and $\vcd(G')\le d-1$.
\end{corollary}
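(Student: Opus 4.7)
The plan is to prove the two assertions separately, both by short reductions to lemmas already available in this section. For the VC-dimension bound, I will combine Lemma~\ref{OM-antipodal} (which says every OM, and in particular $G$, is an antipodal partial cube) with Lemma~\ref{VCdim_d} (which translates $\vcd(G)=d$ into $G\in\mathcal{F}(\Q_{d+1})$). Then Lemma~\ref{lem:antipodal} applies verbatim to the proper convex subgraph $G'$ of the antipodal $G\in\mathcal{F}(\Q_{d+1})$ and yields $G'\in\mathcal{F}(\Q_d)$, so a second invocation of Lemma~\ref{VCdim_d} gives $\vcd(G')\le d-1$.

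For the ampleness of $G'$, my plan is first to descend into a halfspace of $G$. Since $G'$ is a proper convex subgraph, $G'$ is the intersection of the halfspaces of $G$ that contain it, and at least one such halfspace $G_i^+$ is a proper subset of $G$ still containing $G'$. By Lemma~\ref{lem:defUOM}(iii), every halfspace of a UOM is ample, so $G_i^+$ is ample. It then suffices to check that $G'$, now viewed as a convex subgraph of the ample partial cube $G_i^+$, is itself ample. I will justify this last step by unwinding $G'$ as a sequence of elementary pc-restrictions of $G_i^+$: each successive intersection with a halfspace is precisely a pc-restriction, so $G'$ is a pc-minor of $G_i^+$, and Lemma~\ref{contractionsCOM-AMP} then gives that $G'$ is ample.

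I do not foresee a genuine obstacle here: the whole proof is a short assembly of Lemmas~\ref{OM-antipodal}, \ref{VCdim_d}, \ref{lem:antipodal}, \ref{lem:defUOM}, and \ref{contractionsCOM-AMP}. The only bookkeeping point is the observation that a convex subgraph of a partial cube unwinds into an iterated elementary pc-restriction, but this is immediate from the characterization of convex subgraphs as intersections of halfspaces recalled at the start of Section~3.1.1.
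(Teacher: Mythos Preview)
Your proposal is correct and follows essentially the same approach as the paper: both arguments use Lemma~\ref{lem:defUOM}(iii) for ampleness of halfspaces, Lemmas~\ref{OM-antipodal} and~\ref{lem:antipodal} for the VC-dimension drop, and the fact that proper convex subgraphs are intersections of halfspaces. The only cosmetic difference is that the paper applies Lemma~\ref{lem:antipodal} to a halfspace first and then passes to $G'$ by intersecting further halfspaces, whereas you apply Lemma~\ref{lem:antipodal} directly to $G'$ and spell out the pc-minor closure via Lemma~\ref{contractionsCOM-AMP}; both are equally valid.
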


\begin{proof}
	By Lemma~ \ref{lem:defUOM}, any halfspace $H$ of $G$ is ample. By~ Lemmas
	\ref{lem:antipodal} and \ref{OM-antipodal}, 
	$\vcd(H)\le d-1$. We are done, since any proper convex subgraph $G'$ of $G$
	is an intersection of halfspaces.
\end{proof}
%

The following lemma is a well-known result in OM theory. We present a proof illustrating our tools.

\begin{lemma}\label{lem:contractionUOM}
	The class of tope graphs of UOMs is closed under contractions.
\end{lemma}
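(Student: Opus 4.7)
The plan is to invoke the characterization of UOMs from Lemma~\ref{lem:defUOM}: an OM is a UOM if and only if all of its halfspaces are ample partial cubes. Given a UOM $G$ and a $\Theta$-class $E_i$ of $G$, set $G':=\pi_i(G)$. First, $G'$ is an OM by Lemma~\ref{contractionsCOM-AMP} (OMs are closed under contraction). It then suffices to verify that every halfspace of $G'$ is ample.

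The $\Theta$-classes of $G'$ are precisely the classes $E_j$ of $G$ with $j\neq i$, and the halfspaces of $G'$ determined by $E_j$ are
$(G')^{\pm}_j=\pi_i(G^{\pm}_j)$. Indeed, any edge of $E_i$ has both endpoints lying on the same side of the $E_j$-cut (since distinct $\Theta$-classes correspond to distinct coordinates of the hypercube embedding, any edge of $E_i$ preserves the $j$-coordinate). Hence contracting $E_i$ merges only pairs of vertices that are simultaneously in $G^+_j$ or simultaneously in $G^-_j$, so the halfspace partition of $G$ descends to the halfspace partition of $G'$. Since $G$ is a UOM, $G^+_j$ and $G^-_j$ are ample by Lemma~\ref{lem:defUOM}, and since AMPs are closed under pc-minors (again Lemma~\ref{contractionsCOM-AMP}), $\pi_i(G^{\pm}_j)=(G')^{\pm}_j$ is ample.

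Applying Lemma~\ref{lem:defUOM} in the reverse direction to $G'$ then yields that $G'$ is a UOM, completing the argument. The only genuine verification to carry out is the identity $(G')^{\pm}_j=\pi_i(G^{\pm}_j)$; the rest is bookkeeping of what is already known about contractions of OMs and AMPs. I do not anticipate a real obstacle here, since this identity is a standard fact about partial cubes whose distinct $\Theta$-classes are independent of one another under contraction.
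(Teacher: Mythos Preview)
Your proposal is correct and follows essentially the same argument as the paper: both verify the halfspace characterization of Lemma~\ref{lem:defUOM} for the contraction by noting that $(G')^{\pm}_j=\pi_i(G^{\pm}_j)$ and that AMPs are closed under contraction. The paper phrases the identity $(G')^{\pm}_j=\pi_i(G^{\pm}_j)$ as the general commutation of restrictions and contractions in partial cubes (citing~\cite{ChKnMa}), which is exactly what you argue directly via coordinates.
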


\begin{proof}
	Let $G$ be the tope graph of a UOM and $E_i$ be a $\Theta$-class of $G$. To show that
	$G'=\pi_i(G)$ is the tope graph of a UOM, by Lemma \ref{lem:defUOM} we have to prove that all
	halfspaces of $G'$ are ample partial cubes. Consider a $\Theta$-class
	$E_j\neq E_i$ of $G$. Since $E_j\neq E_i$ there is a corresponding
	$\Theta$-class in $G'$.
	Since $G$ is the tope graph of a UOM, by Lemma \ref{lem:defUOM} the  halfspaces of $G$ are
	ample partial cubes, in particular $G^+_j$ is ample. Moreover, as ample
	partial cubes are closed under contractions, $\pi_i(G^+_j)$ is ample.
	Since halfspaces can be viewed as restrictions and knowing that
	contractions and restrictions commute in partial cubes (see, for example
	\cite{ChKnMa}), we get that $\pi_i(G^+_j)=(\pi_i(G))^+_j=(G')^+_j$ is
	ample. Consequently, the halfspaces of $G'$ are ample.
\end{proof}

%
%

\begin{lemma}\label{lem:expandUOM}
	Let $G'$ be a partial cube obtained from the tope graph $G$ of a UOM $\covectors$  by an isometric
	expansion with respect to $(G^1,G^0,G^2)$ such that $G^1=-G^2$, $G^0$ is an
	isometric subgraph of $G$, and $G^0$ is the tope graph of a UOM. Then $G'$ is the tope graph of a UOM. If
	$\vcd(G)=d$ and $\vcd(G^0)\le d-1$, then $\vcd(G')\le d$.
\end{lemma}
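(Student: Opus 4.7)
The plan is to separately verify the VC-dimension bound and the UOM property. The bound $\vcd(G')\le d$ follows immediately from Proposition~\ref{expansion-Qd+1} applied to the hypotheses $\vcd(G)\le d$ (equivalently $G\in\mathcal{F}(\Q_{d+1})$ by Lemma~\ref{VCdim_d}) and $\vcd(G^0)\le d-1$.

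To prove that $G'$ is a UOM, I would invoke Lemma~\ref{lem:defUOM}(iii), which reduces the task to showing (a) $G'$ is antipodal and (b) every halfspace of $G'$ is ample. For (a), define $\sigma\colon V(G')\to V(G')$ by $\sigma(v_k):=(-v)_{3-k}$, where $-v$ denotes the antipode of $v$ in the OM $G$ (Lemma~\ref{OM-antipodal}). The assumption $G^1=-G^2$ guarantees that $v\in V(G^k)\Rightarrow -v\in V(G^{3-k})$, so $\sigma$ is well-defined. One checks that $v_k$ and $\sigma(v_k)$ differ in every $\Theta$-class of $G'$: in each old class $E_j$, because $v$ and $-v$ do in $G$; and in the new class $E_i$, by the subscript swap $k\leftrightarrow 3-k$. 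Hence $\sigma(v_k)$ is the antipode of $v_k$ in $G'$.

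For (b), the halfspaces of $G'$ split into two families. The halfspaces with respect to the new $\Theta$-class $E_i$ are copies of $G^1$ and $G^2=-G^1$. The halfspaces with respect to an old $\Theta$-class $E_j$ are obtained from the ample halfspaces $G_j^{\pm}$ of $G$ (ample by Lemma~\ref{lem:defUOM} applied to $G$) by the isometric expansion inherited from the cover $(G^1\cap G_j^{\pm},\,G^0\cap G_j^{\pm},\,G^2\cap G_j^{\pm})$. Since the restricted separator is either empty, the whole $G^0$ (handled by a peripheral expansion), or an ample halfspace of the UOM $G^0$ (ample by Lemma~\ref{lem:defUOM} applied to $G^0$), an AMP-amalgam argument in the spirit of Proposition~\ref{AMPamalgam} shows that these expanded halfspaces remain ample.

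The main obstacle is to prove that $G^1$ (and hence $G^2$) is itself ample. Because $G^1$ is isometric but generally not convex in $G$, Corollary~\ref{lem:halfUOM} does not apply directly. My plan is to verify Lawrence's cube-antipode characterization of amples: for every cube $\Q$ of the ambient hypercube, if $V(\Q)\cap V(G^1)$ is closed under the antipodal map of $\Q$, then $V(\Q)\cap V(G^1)\in\{\varnothing,V(\Q)\}$. The key structural fact is that the antipodal map of $G$ exchanges $V(G^1)\setminus V(G^0)$ with $V(G^2)\setminus V(G^0)$ and preserves $V(G^0)$ setwise; coupled with an appeal to the UOM structure of $G^0$ to control the portion of $\Q$ lying inside $V(G^0)$, this forces the desired dichotomy. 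Once $G^1$ is ample, all halfspaces of $G'$ are ample and Lemma~\ref{lem:defUOM} yields that $G'$ is a UOM.
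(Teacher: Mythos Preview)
Your route differs from the paper's in an essential way. The paper invokes characterization (ii) of Lemma~\ref{lem:defUOM}---UOMs are exactly the antipodal partial cubes whose proper antipodal subgraphs are all cubes---and checks this directly for $G'$. Given a proper antipodal subgraph $A'\subseteq G'$, the paper contracts the new $\Theta$-class $E_i$: if $E_i$ does not cross $A'$ then $A'$ sits inside $G$ and is a cube there; if $E_i$ crosses $A'$ then $A:=\pi_i(A')$ is antipodal in $G$, hence a cube $\Q_k$, and $A'$ is itself an expansion of $\Q_k$ whose middle layer $A^0$ is a proper antipodal subgraph of the UOM $G^0$, hence again a cube---forcing $A'\cong\Q_{k+1}$. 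This argument never needs to decide whether $G^1$ is ample.

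Your approach via characterization (iii) runs into exactly the obstacle you flag, and the Lawrence-criterion sketch does not close it. Lawrence's test concerns closure under the antipodal map of an \emph{arbitrary subcube} $\Q\subseteq\Q_m$, whereas your ``key structural fact'' is about the antipodal map of $G$; you give no bridge between the two, and none is apparent, since $G^1$ is isometric but generally not convex in $G$ and is not a halfspace of any UOM already in hand. (Indeed, ample partial cubes can contain non-cube antipodal isometric subgraphs such as $C_6\subseteq\Q_3^-$, so ``contained in something ample'' is not enough.) There is a second, subtler gap: Lemma~\ref{lem:defUOM} is stated \emph{for OMs}, so from ``$G'$ is an antipodal partial cube with all halfspaces ample'' you cannot immediately conclude ``$G'$ is a UOM'' without first showing $G'$ is an OM---or else proving that (iii) characterizes UOMs among all antipodal partial cubes, which the paper does not establish. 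The paper's choice of (ii) over (iii), backed by the characterization from~\cite{KnMa1}, is precisely what keeps the proof short.
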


\begin{proof}
	Since $G^1=-G^2$, the graph $G'$ is antipodal, see e.g.~\cite[Lemma
	2.14]{KnMa}. By Lemma \ref{lem:defUOM}, to prove that $G'$ is the tope graph of a UOM, we
	show that all antipodal subgraphs of $G'$ are cubes. Let $A'$ be an
	antipodal subgraph of $G'$ and let $E_i$ be the unique $\Theta$-class of
	$G'$ which does not exist in $G$, i.e., $\pi_i(G')=G$.  If $A'$ does not
	use the $\Theta$-class $E_i$, then $\pi_i(A')=A'$ is a subgraph of
	$\pi_i(G')=G$, thus $A'$ is an antipodal subgraph of $G$. As $G$ is a tope
	graph of a UOM,
	by Lemma \ref{lem:defUOM}, $A'$ is a cube. Otherwise, suppose that $A'$
	uses the $\Theta$-class $E_i$. By Lemma~\ref{lem:antipodal_clos_pi},
	$A=\pi_i(A')$ is an antipodal subgraph of $G$. Since $G$ is a tope graph of
	a UOM, using
	Lemma \ref{lem:defUOM}, $A$ is a cube $\Q_k$ in $G$. Moreover, $A'$ can be
	viewed as an isometric expansion $(A^1,A^0,A^2)$ of $A=\Q_k$ with
	$A^1=-A^2$.
	Moreover, since $G^0$ is an isometric subgraph of $G$, $A^0$ is a convex
	subgraph of $G^0$  that is closed under antipodes. Thus, $A^0$ is an
	antipodal subgraph of $G^0$. Finally, since $A'$ is a proper subgraph of
	$G$, $A^0$ is a proper subgraph of $G^0$. Thus, $A^0$ is a cube since $G^0$
	is a tope graph of a UOM. Thus, by the properties of isometric expansions,
	$G^0\cap
	\Q_k=\Q_k$ and $A'=\Q_{k+1}$ is a cube. The statement about the
	VC-dimension follows straightforward from Lemma~\ref{expansion-Qd+1}.
\end{proof}


\subsection{Completions of tope graphs of OMs to tope graphs of UOMs}

Now, we use standard OM theory to obtain:
\begin{lemma}\label{lem:OMtoUOM}
	The tope graph $G$ of any OM $\covectors$ can be completed to the tope graph of a UOM of the same VC-dimension.
\end{lemma}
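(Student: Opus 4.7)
The plan is to reduce the statement to the well-developed machinery of generic perturbations in oriented matroid theory. By Lemma~\ref{VCdimOM}, the VC-dimension of an OM equals its rank, so it suffices to find a UOM $\mathcal{L}'$ on the same ground set $U$ of the same rank $d$ whose tope set contains the topes of $\mathcal{L}$. I would realize $\mathcal{L}$ via the Folkman--Lawrence Topological Representation Theorem as an arrangement of pseudospheres in $S^{d-1}$, and then perturb the pseudospheres into general position so that every $(d-1)$-subset intersects in exactly two antipodal points. By definition, this yields an arrangement whose cocircuits are precisely the signed supports of size $m-d+1$, hence a UOM $\mathcal{L}'$ of rank $d$.

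The crucial point is that the perturbation preserves every existing tope. This is immediate in the realizable case (each top-dimensional region is open, so a sufficiently small perturbation of the hyperplanes leaves a nonempty open subregion with the same sign pattern), and the argument transfers to the pseudosphere setting because a full-support sign vector $T\in\{\pm 1\}^U$ corresponds to a full-dimensional connected region of the complement of the arrangement, which cannot be destroyed by an arbitrarily small isotopy of the pseudospheres. Consequently every tope of $\mathcal{L}$ is a tope of $\mathcal{L}'$, so $G=G(\mathcal{L})$ embeds as a subgraph of $G(\mathcal{L}')$ in $\Q_m$, and $\vcd(G(\mathcal{L}'))=\rk(\mathcal{L}')=d$ again by Lemma~\ref{VCdimOM}.

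The main obstacle is making the perturbation purely combinatorial so that it applies to non-realizable OMs as well. This is handled by the standard theory of \emph{lexicographic single-element extensions} from~\cite{BjLVStWhZi}: repeatedly replace each element $e\in U$ by a generic perturbation, which splits every cocircuit whose zero-set strictly contains a $(d-1)$-subset into finer cocircuits of the correct support size, while only refining (never merging) the face lattice. Iterating over all elements terminates in a UOM of rank $d$, and because only refinement occurs, the set of topes can only grow. Thus the desired completion $\mathcal{L}\hookrightarrow\mathcal{L}'$ exists, completing the proof.
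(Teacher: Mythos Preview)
Your proposal is correct and rests on the same standard result the paper invokes. The paper's proof is essentially a one-line citation: it appeals to~\cite[Corollary 7.7.9]{BjLVStWhZi}, which states that every OM is the weak-map image of a UOM of the same rank, and then uses~\cite[Proposition 7.7.5]{BjLVStWhZi} to translate ``weak-map image'' into ``the tope graph of the OM is a subgraph of the tope graph of the UOM'' (with the same isometric dimension). You instead sketch the geometric and combinatorial reason \emph{why} that corollary holds---generic perturbation of a pseudosphere arrangement only refines regions, hence preserves all topes---which is exactly the idea underlying the proof in~\cite{BjLVStWhZi}. So the two approaches are the same mathematics at different levels of abstraction: the paper treats the result as a black box, you unpack it.

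One terminological caution: \emph{lexicographic single-element extensions} in~\cite[Section 7.2]{BjLVStWhZi} add a new element to the ground set rather than perturb an existing one in place; the in-place perturbation you describe is obtained by lexicographically extending by a copy $e'$ close to $e$ and then deleting $e$. The iteration you sketch (``repeatedly replace each element'') is not literally the proof given in the book, and it is not obvious that a single pass over $U$ suffices to reach uniformity. Since you ultimately defer to~\cite{BjLVStWhZi} for the rigorous statement anyway, this does not affect correctness, but if you want a self-contained argument it would be cleaner to cite Corollary 7.7.9 directly (as the paper does) rather than describe a variant construction.
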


\begin{proof}
	By~\cite[Definition 7.7.6]{BjLVStWhZi},~\cite[Proposition
	7.7.5]{BjLVStWhZi}, and some easy translation from topes to tope graphs
	there is a \emph{weak map} from the tope graph $G_1$ of an OM to a tope graph $G_2$ of an OM, if $G_2$ is a
	subgraph of $G_1$, both are isometric subgraphs of the same hypercube, and
	both have the same isometric dimension. This implies, that $G_2$ is an
	isometric subgraph of $G_1$. Now~\cite[Corollary 7.7.9]{BjLVStWhZi} says
	that every tope graph $G_2$ of an OM  is the weak map image of  the tope graph $G_1$ of a UOM of the same rank,
	i.e., the same VC-dimension.
\end{proof}

\subsection{Ample completions of tope graphs of UOMs}

\begin{lemma}\label{lem:expAmple}
	A peripheral expansion $G'$ of an ample partial cube $G$
	with respect to an ample subgraph $H$ is ample. 
\end{lemma}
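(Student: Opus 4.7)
The plan is to exhibit $G'$ as an AMP-amalgam of two ample partial cubes and invoke Proposition~\ref{AMPamalgam}. Let $G'$ be the peripheral expansion of the AMP $G$ with respect to the ample subgraph $G^0$, and let $E_i$ denote the new $\Theta$-class created by the expansion. Its two halfspaces are $(G')^-_i \cong G$ and $(G')^+_i \cong G^0$, and the edges of $E_i$ match each vertex $v \in G^0 \subseteq G$ with its copy $v' \in (G')^+_i$.

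If $G^0 = G$, the expansion is simply $G' \cong G \B K_2$, which is ample because the class of ample partial cubes is closed under Cartesian product (this follows directly from the characterization $\uX(\SS)=\oX(\SS)$, since shattered sets of a product split according to the factors). So assume $G^0 \subsetneq G$. Set
\[
G_1 := (G')^-_i \cong G, \qquad G_2 := G'[\,V(G^0) \cup V((G')^+_i)\,] \cong G^0 \B K_2, \qquad G_0 := G_1 \cap G_2 \cong G^0.
\]
I would then check the three bookkeeping items: (i) $(G_1,G_0,G_2)$ is an isometric cover of $G'$, with $V(G')=V(G_1)\cup V(G_2)$, $E(G')=E(G_1)\cup E(G_2)$, and $G_0$ a proper subgraph of both $G_1$ and $G_2$; (ii) $G_1$ and $G_0$ are ample by hypothesis, and $G_2 \cong G^0 \B K_2$ is ample by the product argument above; (iii) $G'$ is itself a partial cube, which is immediate because peripheral isometric expansions of partial cubes are partial cubes, so $G'$ embeds isometrically in one more dimension of hypercube than $G$.

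With all the hypotheses of Proposition~\ref{AMPamalgam} verified, that proposition immediately concludes that $G'$ is ample. The only point that deserves a written justification is the closure of AMPs under Cartesian product with $K_2$; beyond that, the argument is essentially a translation of the peripheral expansion into amalgam language, so I do not expect a real obstacle. If anything, the subtlest piece is confirming that $G_1$ and $G_2$ are isometric in $G'$, which in turn relies on $G^0$ being isometric (equivalently, convex) in $G$; this is implicit in the setup since the expansion is well-defined as a partial cube only under that assumption.
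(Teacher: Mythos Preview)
Your proposal is correct and follows essentially the same route as the paper: both express the peripheral expansion $G'$ as an AMP-amalgam of $G$ and $G^0\product K_2$ along $G^0$, note that $G'$ is a partial cube (as an isometric expansion), and conclude via Proposition~\ref{AMPamalgam}. Your write-up is somewhat more explicit (handling the degenerate case $G^0=G$ separately and spelling out the isometric-cover verification), but the underlying argument is the same.
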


\begin{proof}
	Clearly, $H':=H\product K_2$ is ample. Then $G'$ is an AMP-amalgam of $G$ and $H'$ along
	$H$. Since $G'$ is a partial cube (as an isometric expansion of $G$), by
	Proposition \ref{AMPamalgam} $G'$ is ample.
\end{proof}


\begin{lemma}\label{lem:UOMtoAOM}
 	If $G$ is the tope graph of a UOM of rank $d$, then $G$ can be completed in $\C(G)$ to
 	an ample partial cube $\amp(G)$ of VC-dimension $d$. 
\end{lemma}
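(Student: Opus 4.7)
The plan is to proceed by induction on the number $m$ of $\Theta$-classes of $G$. In the base case $m=d$, the UOM $G$ of rank $d$ on $d$ elements must coincide with $\C(G)=\Q_d$, which is itself ample, so $\amp(G):=G$ works.

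For the inductive step, suppose $m > d$ and pick any $\Theta$-class $E_i$ of $G$. By Lemma \ref{lem:contractionUOM}, the contraction $G':=\pi_i(G)$ is a UOM on $m-1$ elements with $\vcd(G') \le d$, and the inductive hypothesis provides an ample completion $\amp(G') \subseteq \C(G')$ of VC-dimension $\vcd(G')$. I then define $\amp(G)$ to be the peripheral expansion of $\amp(G')$ with respect to the halfspace $G^+_i$, embedded in $\C(G)=\C(G') \B K_2$ by placing $\amp(G')$ on the $x_i=-$ side and $G^+_i$ on the $x_i=+$ side, with the matching on $V(G^+_i)$ producing the $E_i$-edges. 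This is well-defined because $G^+_i$, being a halfspace of a partial cube, is convex in $G$ by Theorem \ref{Djokovic} and hence an isometric subgraph of $\Q_{m-1}$ and of $\amp(G')$.

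Three facts then yield the required properties. By Lemma \ref{lem:defUOM}, $G^+_i$ is ample, so by Lemma \ref{lem:expAmple}, $\amp(G)$ is ample. By Corollary \ref{lem:halfUOM}, $\vcd(G^+_i) \le d-1$, so by Lemma \ref{expansion-Qd+1}, the peripheral expansion remains in $\mathcal{F}(\Q_{d+1})$, giving $\vcd(\amp(G)) \le d$; equality follows because $\amp(G) \supseteq G$ has VC-dimension at least $d$. Finally, the containment $G \subseteq \amp(G)$ is immediate: the $+$ side of the expansion is precisely the lift of $G^+_i$, the $-$ side is $\amp(G') \supseteq G' \supseteq G^-_i$, and every $E_i$-edge of $G$, which connects a vertex of $B_+ \subseteq V(G^+_i)$ to its partner in $B_-$, is among the matching edges of the expansion.

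The step I expect to need the most care is the choice of subgraph along which to perform the peripheral expansion: the halfspace $G^+_i$ works precisely because it plays three roles at once. It is ample, so Lemma \ref{lem:expAmple} applies; it has VC-dim at most $d-1$, so Lemma \ref{expansion-Qd+1} preserves $\vcd \le d$; and it contains the boundary $B_+$, so the matching in the peripheral expansion correctly produces all of the $E_i$-edges of $G$ while adding the extra vertices and cube structure that render $\amp(G)$ ample.
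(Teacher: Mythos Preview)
Your proof is correct and follows essentially the same approach as the paper: contract a $\Theta$-class, complete the resulting UOM by induction, and re-expand peripherally along the image of a halfspace, invoking Lemmas \ref{lem:defUOM}, \ref{lem:contractionUOM}, \ref{lem:expAmple}, Corollary \ref{lem:halfUOM}, and Proposition \ref{expansion-Qd+1} exactly as the paper does. Your write-up is in fact slightly more explicit than the paper's in spelling out the base case and in verifying that $G\subseteq\amp(G)$ after the expansion.
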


\begin{proof} Let $E_i$ be any $\Theta$-class of $G$ and let $G^+_i$ and
$G^-_i$ be the halfspaces defined by $E_i$. By Lemma \ref{lem:defUOM},
$G^+_i$ and $G^-_i$ are ample partial cubes. Let $G'=\pi_i(G)$ be the partial
cube obtained by contracting the edges of $E_i$. By Lemma
\ref{lem:contractionUOM}, $G'$ is a tope graph of a UOM. Since $\pi_i(G^+_i)$
and $\pi_i(G^-_i)$
are isomorphic to $G^+_i$ and $G^-_i$, respectively, those subgraphs of $G'$
are ample partial cubes. 
By Corollary \ref{lem:halfUOM}, $G^+_i,G^-_i$ and  $\pi_i(G^+_i), \pi_i(G^-_i)$ have VC-dimension at most $d-1$.

By induction hypothesis, $G'$ admits an ample completion $\amp(G')$ included in
$\C(G')$ (where  $\C(G')$ is considered in  the hypercube of one less
dimension) and having VC-dimension $d$. Define $\amp(G)$ as the peripheral
expansion of $\amp(G')$ with respect to the ample partial cube $\pi_i(G^+_i)$.
By Lemma \ref{lem:expAmple}, $\amp(G)$ is indeed ample. Notice also that
$\amp(G)$ is contained in $\C(G)$ (considered in the original hypercube). It
remains to show that  $\amp(G)$  has VC-dimension $d$.
The partial cube $\amp(G)$ is obtained from $\amp(G')$ by an isometric peripheral expansion  with respect to $\pi_i(G^+_i)$ of VC-dimension $\le d-1$. By Proposition \ref{expansion-Qd+1}, $\amp(G)$ has VC-dimension $d$.
\end{proof}

%

This concludes the proof of Theorem \ref{OMtoAMP}.

\section{Ample completions of CUOMs}

Recall that a  COM $\covectors$ is called a {\it complex of uniform oriented matroids}
(CUOM) if each facet of $\covectors$ is a UOM. The goal of this section is to prove the
following result:

\begin{theorem} \label{CUOMtoAMP} Let $\covectors$ be a complex of uniform oriented matroids and $G$ its tope graph
of VC-dimension $d$. Then $G$ can be completed to an ample partial cube $\ac(G)$ of VC-dimension~$d$.
\end{theorem}

\begin{remark} Note that in a COM of VC-dimension $2$ the faces correspond to
vertices, edges, and even cycles in its tope graph. Hence, 2-dimensional COMs are CUOMs and
Theorem~\ref{CUOMtoAMP} generalizes the ample completion of 2-dimensional COMs
presented in \cite[Subsection 6.2]{ChKnPh}.
\end{remark}

The idea of the proof is to independently complete the facets of $G$ to AMPs
(using the recursive completion of tope graphs of UOMs) and show that their union is ample and has VC-dimension $d$.


\begin{example}
	\label{ex:CUOM}
	Fig. \ref{fig:CUOM_completion} presents the ample completion of the tope graph of a CUOM with two
	$D$-facets and one $Q_3$-facet. It is obtained by completing the two rhombododecahedra as UOMs (see Fig. \ref{fig:prism_completion} (b)\& (c)).
\end{example}

\begin{figure}[h]\centering
	\includegraphics[width=0.75\linewidth]{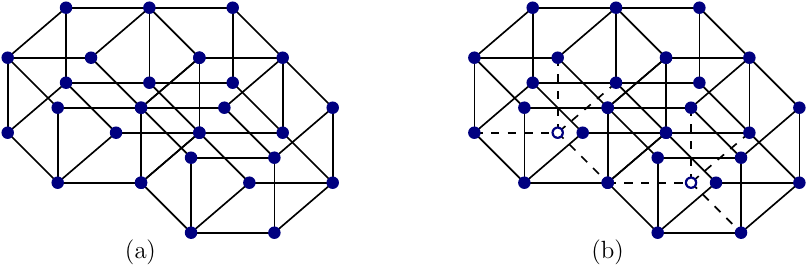}
	\caption{(a) The tope graph of a CUOM. (b) Its ample completion.}
	\label{fig:CUOM_completion}
\end{figure}


\subsection{A characterization of CUOMs} \label{characterization-CUOMs}

We start with a characterization of CUOMs: 

\begin{proposition} \label{CUOM}
	For the tope graph $G$ of a COM $\covectors$ the following conditions are equivalent:
	\begin{itemize}
		\item[(i)] $G$ is the tope graph of a CUOM;
		\item[(ii)] all non inclusion maximal faces of $G$ are hypercubes;
		\item[(iii)]  all half-carriers of $G$ are ample partial cubes.
	\end{itemize}
\end{proposition}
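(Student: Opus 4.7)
The plan is to reduce the three equivalences to Lemma~\ref{lem:defUOM} applied facet-by-facet, using Proposition~\ref{carriers} to ensure each half-carrier is itself a COM. For (i)$\Leftrightarrow$(ii), the key observation is that the non-inclusion-maximal faces of $G$ are exactly the proper subfaces (in the OM sense) of the facets of $G$: the face $\F(X)$ fails to be inclusion-maximal in $G$ iff $X$ is not a $\leq$-minimum of $\covectors$, and any such $X$ dominates some minimum covector $M$, giving $\F(X)\subsetneq \F(M)$. A direct composition calculation (using $X\circ M=X$ when $X\geq M$) confirms that $\F(X)$ as a face of $G$ coincides with the face of $X$ computed inside the facet OM $\F(M)$. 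Hence (ii) amounts to requiring that every facet of $G$ satisfies Lemma~\ref{lem:defUOM}(ii), which by that lemma is equivalent to (i).

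\medskip

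\emph{Step (ii)$\Rightarrow$(iii).} Let $N^+=N^+(\covectors^0_e)$; by Proposition~\ref{carriers} it is a COM, and I will verify the axiom (IC), i.e.\ $\uparr N^+\subseteq N^+$. Fix $X\in N^+$: by definition of the carrier there exists $X'\in \covectors$ with $X'_e=0$ and $X'\leq X$, and since $X_e=+$ we get $X'<X$ strictly, so $X$ is not minimum in $\covectors$ and $\F(X)$ is a non-inclusion-maximal face of $G$. By (ii), $\F(X)$ is a hypercube, so every $W\in\{\pm1,0\}^U$ with $W\geq X$ lies in $\covectors$. Any such $W$ also satisfies $W\geq X'$, whence $W\in \F(X')\subseteq N(\covectors^0_e)$, together with $W_e=X_e=+$; thus $W\in N^+$, which is precisely (IC) for $N^+$.

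\medskip

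\emph{Step (iii)$\Rightarrow$(ii).} For a non-inclusion-maximal face $\F(X)$ of $G$, there exists $Y<X$ in $\covectors$; the support difference $\underline{X}\setminus\underline{Y}$ is non-empty and contains some $e$. Using $N^-(\covectors^0_e)$ in place of $N^+$ when $X_e=-$, we may assume $X_e=+$. Then $X\in N^+(\covectors^0_e)$, with $Y$ as witness, and by (iii) $N^+$ is ample; the (IC) axiom for $N^+$ yields $\uparr X\subseteq N^+\subseteq \covectors$, forcing $\F(X)$ to be a hypercube. The delicate part is Step (ii)$\Rightarrow$(iii): one must invoke the hypercube-face hypothesis on $G$ simultaneously with the witness $X'<X$ built into the very definition of $N^+$ in order to push the entire cube above $X$ back inside $N^+$; once this is in place, the rest is routine bookkeeping on face lattices and a direct appeal to the OM version of the result.
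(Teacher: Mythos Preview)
Your proof is correct and follows essentially the same strategy as the paper: both reduce the equivalences to Lemma~\ref{lem:defUOM} facet-by-facet, use Proposition~\ref{carriers} to know half-carriers are COMs, and then verify or use the (IC) axiom. The one structural difference worth noting is how the cycle is closed. The paper proves (iii)$\Rightarrow$(i) by contrapositive: if some facet $\F(X)$ is not a UOM, Lemma~\ref{lem:defUOM}(iii) produces a non-ample half-carrier of $\F(X)$, which then sits inside a half-carrier of $G$ and witnesses its non-ampleness. You instead prove (iii)$\Rightarrow$(ii) directly: for a non-maximal face $\F(X)$ you locate a coordinate $e$ with $X_e\neq 0$ but $Y_e=0$ for some $Y<X$, place $X$ in the corresponding half-carrier, and read off $\uparr X\subseteq \covectors$ from (IC). Your route is slightly more elementary, since it avoids the implicit claim in the paper's argument that non-ampleness of a half-carrier of a facet propagates to the ambient half-carrier of $G$; on the other hand, the paper's route makes the parallel with Lemma~\ref{lem:defUOM}(iii) more visibly symmetric. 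Either way, the content is the same.
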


\begin{proof}
	(i)$\Rightarrow$(ii): This trivially follows from the definitions of UOMs
	and CUOMs.
	
	(ii)$\Rightarrow$(iii): From Proposition \ref{carriers} it follows that the
	half-carriers $N^+_i(G)$ and $N^-_i(G)$ of the tope graph of a COM $G$ are tope graphs of
    COMs. By the
	definition of half-carriers, each face $\F(Y)$ of a half-carrier, say of
	$N^+_i(G)$, is properly contained in a facet $\F(X)$ of the carrier
	$N_i(G)$. Then $\F(X)$  is a facet of $G$. Thus $\F(X)$ is a tope graph of a UOM and
	$\F(Y)$ is a cube. The tope graph of a COM in which all faces are cubes is ample because it
	satisfies (IC). This proves that all half-carriers of $G$ are ample.
	
	(iii)$\Rightarrow$(i): Suppose $\covectors$ is not a CUOM, i.e., its tope graph $G$ contains a
	facet $\F(X)$ which is not the tope graph of a UOM. By Lemma \ref{lem:defUOM}(iii), $\F(X)$
	contains a non-ample half-carrier, say $N^+_i(\F(X))$ defined by the
	$\Theta$-class $E'_i$ of $\F(X)$. This $\Theta$-class $E'_i$ can be
	extended to a $\Theta$-class $E_i$ of $G$ and $N^+_i(\F(X))$ is included in
	the half-carrier $N^+_i(G)$ of $G$. Since $N^+_i(\F(X))$ is not ample,
	$N^+_i(G)$ is also not ample.
\end{proof}

\subsection{Single gated extensions of partial cubes} \label{extension-partial-cubes}
We mentioned already that all faces of a tope graph $G$ of a COM  are gated subgraphs of $G$ and
the completion method of $G$  should first take care of completing  its faces.
In this subsection, we prove a general result about a partial completion of a
partial cube $G$ with respect to a gated subgraph. We suppose that $G$ is
isometrically embedded in the hypercube $\Q_m=\Q(U)$. Recall that $\C(G)$ is
the smallest cube of $\Q_m$ containing $G$.

\begin{proposition}
	\label{thm:gated_set_completion}
	Let $G$ be a partial cube and $H$ be a gated subgraph of $G$. Let $H'$ be
	an isometric subgraph of $\Q_m$ such that $H\subseteq H'\subseteq \C(H)$
	and let $G'$ be the subgraph of $\Q_m$ induced by $V(G)\cup V(H')$.
	Then the following holds:
	\begin{itemize}
		\item[(i)] $G'$ is an isometric subgraph of $\Q_m$;
		\item[(ii)] $H'$ is a gated subgraph of $G'$ and for each vertex $v$
		its gates in $H$ and $H'$ coincide;
		\item[(iii)] $d:=\vcd(G') = \max\{\vcd(G), \vcd(H')\}$.
	\end{itemize}
	In particular, if $\vcd(H')\le \vcd(G)$, then $\vcd(G')=d$.
\end{proposition}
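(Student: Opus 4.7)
My plan is to control the interaction between $G$ and $H'$ via a precise description of the gate of an external vertex of $G$ in $H$. Let $S\subseteq U$ be the set of coordinates on which $H$ is non-constant, and let $w\in\{\pm1\}^{U\setminus S}$ denote the constant pattern of $H$ on $U\setminus S$. Then $\C(H)$ is exactly the set of vertices of $\Q_m$ whose restriction to $U\setminus S$ is $w$, and the hypothesis $H\subseteq H'\subseteq\C(H)$ implies that every vertex of $H'$ shares that restriction. The key observation I will establish first is that for every $u\in V(G)$, if $u'$ denotes the gate of $u$ in $H$ inside $G$, then $u'_i=u_i$ for all $i\in S$ and $u'_i=w_i$ for all $i\in U\setminus S$. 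This follows directly from Lemma~\ref{gated-theta}: no shortest $(u,u')$-path uses any $\Theta$-class of $H$, but the $\Theta$-classes of $H$ are exactly those $E_i$ with $i\in S$, so such a path flips only coordinates in $U\setminus S$; meanwhile the membership $u'\in H$ forces $u'_i=w_i$ for $i\in U\setminus S$.

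For (i), the only nontrivial case is $u\in V(G)\setminus V(H')$ and $v\in V(H')\setminus V(G)$, for which I concatenate a shortest $(u,u')$-path in $G$ with a shortest $(u',v)$-path in $H'$; both are shortest paths of $\Q_m$ because $G$ and $H'$ are isometric in $\Q_m$ and $u'\in H\subseteq H'$. A coordinatewise check using the gate description gives $d_{\Q_m}(u,u')+d_{\Q_m}(u',v)=d_{\Q_m}(u,v)$: for $i\in S$ one has $u'_i=u_i$, and for $i\in U\setminus S$ one has $u'_i=w_i=v_i$, so $u'_i\in\{u_i,v_i\}$ with $u'_i=u_i$ whenever $u_i=v_i$. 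Hence the concatenated path realizes the Hamming distance, proving (i). For (ii), the same additive identity, interpreted in the now-isometric subgraph $G'$, exhibits $u'$ as a gate of $u$ in $H'$ inside $G'$; by uniqueness of gates, the gates of $u$ in $H$ and in $H'$ coincide.

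For (iii), the lower bound $\vcd(G')\ge\max\{\vcd(G),\vcd(H')\}$ is immediate. For the reverse, I take $X\subseteq U$ shattered by $G'$, pick for each $Y\subseteq X$ a realizer $s_Y\in V(G')$ with $s_Y\cap X=Y$, and split on how $X$ meets $S$. If $X\subseteq S$, then whenever $s_Y\in V(G)$ its gate $t_Y\in H\subseteq H'$ agrees with $s_Y$ on $S\supseteq X$ by the key observation, hence witnesses $Y$ in $H'$; combined with the case $s_Y\in V(H')$, this shows $X$ is shattered by $H'$. If instead $X\cap(U\setminus S)\ne\varnothing$, then patterns $Y$ with $Y|_{X\cap(U\setminus S)}\ne w|_{X\cap(U\setminus S)}$ cannot be realized in $H'$, so their realizers lie in $V(G)$; for the remaining ``good'' patterns $Y$ with $Y|_{X\cap(U\setminus S)}=w|_{X\cap(U\setminus S)}$, I flip any coordinate of $X\cap(U\setminus S)$ to obtain a ``bad'' $Y'$, take $s_{Y'}\in V(G)$, and let $t$ be its gate in $H$; then $t|_{X\cap S}=s_{Y'}|_{X\cap S}=Y|_{X\cap S}$ and $t|_{X\cap(U\setminus S)}=w|_{X\cap(U\setminus S)}=Y|_{X\cap(U\setminus S)}$, so $t\in H\subseteq G$ witnesses $Y$ in $G$. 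Hence $X$ is shattered by $G$. I expect the main obstacle to be this second subcase: it is not obvious a priori that $G$ must realize every pattern of $X$, since some patterns might only be witnessed in $H'\setminus G$, and the pull-back through the gate of a coordinate-flipped companion is what resolves this---a trick that depends essentially on the coordinatewise description of the gate established at the outset.
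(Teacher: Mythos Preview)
Your proof is correct. Parts (i) and (ii) follow essentially the same line as the paper: the paper also routes through the gate $u'$ of $u$ in $H$, argues via Lemma~\ref{gated-theta} that a shortest $(u,u')$-path avoids the $\Theta$-classes of $H$ while a shortest $(u',v)$-path in $H'$ uses only those classes, and concludes by Lemma~\ref{path-theta}. Your explicit coordinate description of the gate ($u'_i=u_i$ on $S$, $u'_i=w_i$ off $S$) is just a concrete rephrasing of that same argument.

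Part (iii) is where you genuinely diverge. The paper argues by contradiction: assuming $G'$ shatters some $X$ of size exceeding $\max\{\vcd(G),\vcd(H')\}$, it works with the fiber decomposition of $G'$ over $\Q(X)$, proves an auxiliary Claim that every shortest path from $V(G)\setminus V(H)$ to $V(H')\setminus V(H)$ traverses $V(H)$, and uses this to localize an empty $G$-fiber inside $H'$ and an empty $H'$-fiber inside $G$, deriving a contradiction from Lemma~\ref{gated-theta}. Your argument is direct: you show that \emph{every} $X$ shattered by $G'$ is already shattered by $G$ or by $H'$, by splitting on whether $X\subseteq S$. The clever step---pulling a ``bad'' realizer $s_{Y'}\in V(G)$ back to its gate in $H$ to manufacture a $G$-witness for a ``good'' pattern $Y$---replaces the paper's fiber-and-separation machinery entirely. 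Your route is shorter and more elementary (no auxiliary claim, no fiber language); the paper's route is more structural and perhaps generalizes more readily to settings where an explicit coordinate description of gates is unavailable.
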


\begin{proof}
	Since $H$ is a gated and thus a convex subgraph of $G$, we have
	$\C(H)\cap V(G)=V(H)$. First we prove that $G'$ is an isometric subgraph of
	$\Q_m$. Since $G$ and $H'$ are isometric subgraphs of $\Q_m$, it
	suffices to show that any vertex $v \in V(G) \setminus V(H)$ and any vertex
	$u \in V(H') \setminus V(H)$ can be connected in the graph $G'$ by a
	shortest path of $\Q_m$. Since $H$ is a gated subgraph of $G$, let $v'$ be
	the gate of $v$ in $H$. Let $P$ be any shortest $(v,v')$-path of $G$.
	Since $v'$ is the gate of $v$ in $H$, by Lemma \ref{gated-theta}, $P$ does
	not use any $\Theta$-class that appear in $H$.  From the definition of
	$\C(H)$, the $\Theta$-classes of $H$ and $\C(H)$ coincide. Since $H'$ is an
	isometric subgraph of $\C(H)$ and $v' \in V(H), u\in V(H')$, any shortest
	$(v',u)$-path $S$ of $G'$ can use only the $\Theta$-classes of $\C(H)$, and
	thus of $H$. This implies that the concatenation of $P$ and $S$ is a
	$(v,u)$-path $R$ of $G'$ whose all $\Theta$-classes are pairwise distinct.
	By Lemma \ref{path-theta}, $R$ is a shortest path of $\Q_m$, establishing
	that $G'$ is an isometric subgraph of $\Q_m$.
	Moreover, the gate of $v$ in $H'$ is also $v'$, because from $v'$ (the gate
	of $v$ in $H$) we can reach any vertex of $H'$ using only $\Theta$-classes
	belonging to $H$. We conclude that the gates of $H'$ coincide with those of
	$H$. This proves the assertions (i) and (ii).
	
	Before proving assertion (iii), we establish the following claim:
	
	\begin{claim} \label{traversingV(H)}
		All shortest paths of $G'$ from a vertex of $v\in V(G) \setminus V(H)$
		to a vertex of $z\in V(H') \setminus V(H)$ traverse $V(H)$.
	\end{claim}
	
	\begin{proof}
		Suppose by way of contradiction that there exists a shortest
		$(v,z)$-path $T$ of $G'$ not intersecting $V(H)$.  Since $v\in V(G)
		\setminus V(H)$,   $z\in V(H') \setminus V(H)$, and $T\subset (V(G)\cup
		V(H'))\setminus V(H)$, the path $T$ contains an edge $xy$ with $x\in
		V(G) \setminus V(H)$ and $y\in V(H') \setminus V(H)$. We proved above
		that for any vertex of $G$ its gates in $H$ and in $H'$ are the same.
		Since the vertices $x\in V(G) \setminus V(H)$ and $y\in V(H')$ are
		adjacent, $y$ must be the gate of $x$ in $H'$. Thus $y$ is the gate of
		$x$ in $H$, contrary to the assumption that $y\notin V(H)$.
	\end{proof}
	
	To prove (iii), suppose by way of contradiction that 
	$d> \max\{\vcd(G), \vcd(H')\}$. This implies that $G'$ shatters the
	$d$-cube $Q_d:=\Q(X)$ for some $X\subseteq U$, $|X|=d$. By Lemma
	\ref{shattering-fibers}, each fiber $G'_{X'}, X'\subseteq X$ of $G'$ is
	nonempty. Let $\psi: V(G')\rightarrow V(\Q_d)$ be the shattering map,
	mapping each $X$-fiber $G'_{X'}$ of $G'$ to the subset $X'$ of $X$. Since
	$d>\max\{\vcd(G), \vcd(H')\}$, neither $G$ nor $H'$ shatter $\Q_d$,
	therefore the map $\psi$ restricted to $V(G)$ and to $V(H')$ is no longer
	shattering. By Lemma \ref{shattering-fibers}, there exist two subsets
	$Y,Z$ of $X$ such that the fibers $G_Y$ and $H'_Z$ (of $G$ and $H'$,
	respectively) are empty. On the other hand, the fibers $G'_Y$ and $G'_Z$
	are nonempty.
	
	
	By Claim \ref{traversingV(H)} all shortest paths from $V(G) \setminus V(H)$
	to $V(H') \setminus V(H)$ pass through $V(H)$ and since $V(H) \subseteq
	V(H')\cap V(G)$, whence all vertices of the fiber $G'_Y$ are included in
	$V(H')\setminus V(G)$. This implies that every $X$-edge of $G'$ with one
	end in $G'_Y$ must have the other end in $H'$. Since $H'$ has the same
	$\Theta$-classes as $H$, each such $X$-edge is defined by a $\Theta$-class
	of $H$. Since in $\Q_d$ any vertex is incident to an edge from every
	$\Theta$-class, $G'_Y$ must be incident to all types of $X$-edges.
	
	Now, applying again Claim \ref{traversingV(H)}, we conclude that the fiber
	$G'_Z$ is included in $V(G')\setminus V(H')$. Pick any vertex $v\in
	V(G'_Z)$ and let $v'$ be its gate in $H$ (and in $H'$). Since $v\in
	V(G')\setminus V(H')$, necessarily $v'\ne v$. Let $P$ be a shortest
	$(v,v')$-path of $G'$. Since $v$ and $v'$ belong to different fibers of
	$G'$, necessarily $P$ contains an $X$-edge $xy$. Since any $X$-edge is
	defined by a $\Theta$-class of $H$ (and $H')$, Lemma \ref{gated-theta}
	yields a contradiction with the assertion (ii) that $v'$ is the gate of $v$
	in $H$ and $H'$. This contradiction shows that $d=\max\{\vcd(G),\vcd(H')\}$.
\end{proof}

\begin{remark}
	If $G'$ is obtained from a partial cube $G$ via a single extension with
	respect to a gated subgraph $H$  (as in Lemma
	\ref{thm:gated_set_completion}), some gated subgraphs of $G$ may no
	longer be gated in $G'$. Next we  show that this phenomenon does not arise in
	tope graphs of CUOMs.
\end{remark}

\subsection{Mutual projections between faces of COMs} \label{mutual-projections}
In the proof of Theorem \ref{CUOMtoAMP} we use the following result of Dress and Scharlau \cite{DrSch} on mutual metric projections  between gated sets.  
Recall that the {\it distance} $d(A,B)$ between two sets of vertices $A,B$ of a graph $G$ is $\min \{ d(a,b): a\in A, b\in B\}$. 
The {\it metric projection} $\pr_B(A)$ of $B$ on $A$ consists of all vertices
$a$ of $A$ realizing the distance $d(A,B)$ between $A$ and $B$, i.e.,
$\pr_B(A)=\{ a\in A: d(a,B)=d(A,B)\}$. 

\begin{theorem}
	\label{mutual-gates} \cite[Theorem]{DrSch}
	Let $A$ and $B$ be two gated subgraphs of a graph $G$. Then $\pr_A(B)$ and
	$\pr_B(A)$ induce two isomorphic gated subgraphs of $G$ such that for any
	vertex $a'\in \pr_B(A)$ if $b'=\pr_{a'}(B)$, then
	$d(a',b')=d(\pr_A(B),\pr_B(A))=d(A,B)$, $\pr_{b'}(A)=a'$, and the map
	$a'\mapsto b'$ defines an isomorphism between $\pr_A(B)$ and $\pr_B(A)$.
\end{theorem}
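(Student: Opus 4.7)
The plan is to derive the four assertions in turn. Set $k := d(A, B)$, $A_0 := \pr_B(A)$, $B_0 := \pr_A(B)$, and write $g_A, g_B$ for the respective gate maps.

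First, I would show that $d(a, b) = k$ with $a \in A$, $b \in B$ is equivalent to $a \in A_0$, $b \in B_0$, $b = g_B(a)$, and $a = g_A(b)$. Indeed, gatedness of $B$ at $a$ gives $d(a, b) = d(a, g_B(a)) + d(g_B(a), b)$; since $d(a, g_B(a)) \ge k$, this forces $g_B(a) = b$, and dually $g_A(b) = a$. Hence $\phi := g_B|_{A_0}$ and $\psi := g_A|_{B_0}$ are mutually inverse bijections $A_0 \leftrightarrow B_0$. For the isometry, given $a_1, a_2 \in A_0$ with $b_i := \phi(a_i)$, I evaluate $d(a_1, b_2)$ two ways: via gatedness of $B$ at $a_1$, $d(a_1, b_2) = k + d(b_1, b_2)$; via gatedness of $A$ at $b_2$, $d(a_1, b_2) = d(a_1, a_2) + k$. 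Equating yields $d(a_1, a_2) = d(b_1, b_2)$, and since edges are distance-one pairs, $\phi$ induces a graph isomorphism of the induced subgraphs on $A_0$ and $B_0$.

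For gatedness of $A_0$, given $v \in V(G)$ I set $a_0 := g_A(v)$ and iterate the map $a \mapsto g_A(g_B(a))$. Since $G$ is finite and gatedness of $A$ at $g_B(a_i)$ yields $d(a_i, B) = d(a_i, a_{i+1}) + d(a_{i+1}, g_B(a_i)) \ge d(a_i, a_{i+1}) + d(a_{i+1}, B)$, the sequence stabilizes at some fixed point $a^*$ of $g_A \circ g_B$; telescoping combined with the triangle inequality gives $d(a_0, B) = d(a_0, a^*) + d(a^*, B)$. Writing $b^* := g_B(a^*)$ and picking any reference pair $(a', b') \in A_0 \times B_0$ with $d(a', b') = k$, a pairing argument --- adding the two identities $d(a^*, b^*) + d(b^*, b') = d(a^*, b') = d(a^*, a') + k$ and $d(a', a^*) + d(a^*, b^*) = d(a', b^*) = k + d(b^*, b')$ obtained from the four gate identities --- collapses to $d(a^*, b^*) = k$, so $a^* \in A_0$ and hence $d(a_0, B) = d(a_0, a^*) + k$.

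Finally, I verify that $a^*$ is the gate of $v$ in $A_0$. The vertex $b_1 := g_B(a_0)$ satisfies $d(a_0, b_1) = d(a_0, a^*) + k$, while the triangle inequality gives $d(a_0, b^*) \le d(a_0, a^*) + k$, so by uniqueness of the $B$-gate we conclude $b_1 = b^*$. For any $a \in A_0$ with $b := g_B(a)$, gatedness of $A$ at $b$ yields $d(a_0, b) = d(a_0, a) + k$, while gatedness of $B$ at $a_0$ (gate $b^*$) together with the isometry $d(b^*, b) = d(a^*, a)$ from the previous paragraph yields $d(a_0, b) = d(a_0, a^*) + k + d(a^*, a)$. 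Equating gives $d(a_0, a) = d(a_0, a^*) + d(a^*, a)$, and composing with $d(v, a) = d(v, a_0) + d(a_0, a)$ from gatedness of $A$ at $v$ yields $d(v, a) = d(v, a^*) + d(a^*, a)$, as required. The main obstacle is the pairing argument showing that iteration fixed points lie in $A_0$; once this is established, the remaining manipulations flow directly from the gate identities and the step-two isometry.
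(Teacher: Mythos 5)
Your proof is correct. There is nothing in the paper to compare it with: the statement is quoted from Dress and Scharlau \cite{DrSch} and used as a black box, so yours is the only argument on the table. It is a complete, self-contained proof in the finite-graph setting. The first paragraph (the equivalence $d(a,b)=d(A,B)$ iff $a,b$ are mutual gates lying in $\pr_B(A)\times\pr_A(B)$, and the two-way evaluation of $d(a_1,b_2)$ giving $d(a_1,a_2)=d(b_1,b_2)$) is the standard route to the isomorphism and the distance assertions. The genuinely delicate part is gatedness of $\pr_B(A)$, and your treatment checks out: the iteration $a\mapsto g_A(g_B(a))$ strictly decreases $d(\cdot,B)$ until it stabilizes, the telescoping gives $d(a_0,B)=d(a_0,a^*)+d(a^*,B)$, the four gate identities around the quadruple $a^*,b^*,a',b'$ cancel to $2d(a^*,b^*)=2\,d(A,B)$ so the fixed point lies in $\pr_B(A)$, the identification $g_B(a_0)=b^*$ follows from uniqueness of the gate, and the final chain $d(v,a)=d(v,a_0)+d(a_0,a^*)+d(a^*,a)=d(v,a^*)+d(a^*,a)$ uses that $a_0$ is the gate of $v$ in $A$. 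The only implicit points are the trivial case $v\in\pr_B(A)$, the symmetric argument for $\pr_A(B)$, and the finiteness of $G$ needed for the iteration to terminate, which is covered by the paper's standing assumption that graphs are finite. Note that the cited theorem of Dress--Scharlau holds for gated subsets of arbitrary metric spaces, where a termination argument of this kind is unavailable, so your proof is less general than theirs; but for the way the theorem is applied in this paper (finite tope graphs), your argument fully suffices.
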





For $X,Y\in \covectors$,  we denote by $\pr_{\F(X)}(\F(Y))$ the metric
projection of $\F(X)$ on $\F(Y)$ in the tope graph $G$ of $\covectors$ and by $\pr_{\C(X)}(\C(Y))$
the metric projection of the cube $\C(X)$ on the cube $\C(Y)$ in the hypercube
$\Q(U)$.  Since by Lemma \ref{face-gated} the faces $\F(X)$ of $X\in
\covectors$ are  gated in $G$ and all cubes $\C(X)$ are gated in  $\Q(U)$,
applying Theorem \ref{mutual-gates} to them we conclude that
$\pr_{\F(X)}(\F(Y))$ and $\pr_{\F(Y)}(\F(X))$ are isomorphic as well as
$\pr_{\C(X)}(\C(Y))$ and $\pr_{\C(Y)}(\C(X))$ and those isomorphisms map the
pairs of vertices realizing the distances between $\pr_{\F(X)}(\F(Y))$ and
$\pr_{\F(Y)}(\F(X))$ and between $\pr_{\C(X)}(\C(Y))$ and $\pr_{\C(Y)}(\C(X))$.
We say that two faces $\F(X)$ and $\F(Y)$ of $\covectors$ are {\it parallel} if
$\pr_{\F(X)}\F(Y)=\F(Y)$ and  $\pr_{\F(Y)}\F(X)=\F(X)$. A {\it gallery} between
two parallel faces $\F(X)$ and $\F(Y)$ of  $\covectors$ is a sequence of faces
$(\F(X)=\F(X_0),\F(X_1),\ldots, \F(X_{k-1}),\F(X_k)=\F(Y))$ such that any two
faces of this sequence are parallel and any two consecutive faces
$\F(X_{i-1}),\F(X_i)$ are facets of a common face of $\covectors$. A {\it
geodesic gallery} between $\F(X)$ and $\F(Y)$ is a gallery of length
$d(\F(X),\F(Y))=|S(X,Y)|$. Two parallel faces $\F(X),\F(Y)$ are called {\it
adjacent} if $|S(X,Y)|=1$, i.e., $\F(X)$ and $\F(Y)$ are opposite facets of a
face of $\covectors$. See Fig. \ref{fig:galerie} for an illustration.

\begin{figure}[htb]
	\centering
	\includegraphics[width=.5\textwidth]{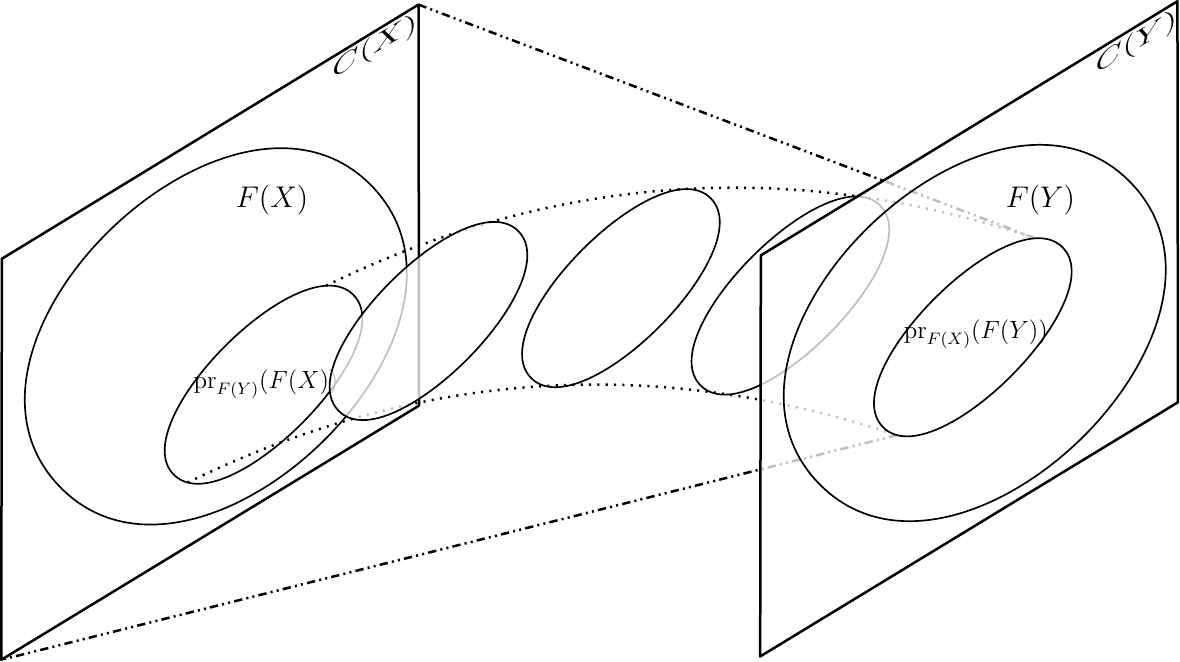}
	\caption{Two faces $F(X)$ and $F(Y)$, their mutual projections
	$\pr_{F(Y)}(F(X))$ and $\pr_{F(X)}(F(Y))$, and a  geodesic 
    gallery connecting them.}
	\label{fig:galerie}
\end{figure}

The most part of next result holds for all COMs.
Therefore, we specify in the assertions where we require CUOMs. We use
simultaneously the covector and the tope graph notations.

\begin{proposition}
	\label{prop:projection}
	For any two covectors $X,Y$ of a COM $\covectors$,  the following
	properties hold:
	\begin{itemize}
		\item[(i)] $d(\F(X),\F(Y))=d(\C(X),\C(Y))=|S(X,Y)|$;
		\item[(ii)]  $\pr_{\F(X)}(\F(Y))\subseteq \pr_{\C(X)}(\C(Y))$ and
		$\pr_{\F(Y)}(\F(X))\subseteq \pr_{\C(Y)}(\C(X))$;
		\item[(iii)] $\pr_{\F(Y)}(\F(X))=\F(X\circ Y)$ and
		$\pr_{\F(X)}(\F(Y))=\F(Y\circ X)$;
		\item[(iv)] $\F(X)$ and $\F(Y)$ are parallel if and only if
		$\underline{X}=\underline{Y}$ (or, equivalently, if $X^0=Y^0$);
		\item[(v)] $\pr_{\F(Y)}(\F(X))$ and $\pr_{\F(X)}(\F(Y))$ are parallel
		faces of $\covectors$;
		\item[(vi)]  any two parallel faces $\F(X)$ and $\F(Y)$  can be
		connected in $\covectors$ by a geodesic gallery;
		\item[(vii)] if $\F(X)$ is a facet of $\covectors$, then
		$\pr_{\F(Y)}(\F(X))$ is a proper face of $\F(X)$;
		\item[(viii)] if $\covectors$ is a CUOM and $\F(X),\F(Y)$ are facets,
		then  $\pr_{\F(Y)}(\F(X)),$ $\pr_{\F(X)}(\F(Y))$ are cubes;
		\item[(ix)] if $\covectors$ is a CUOM and $\F(X),\F(Y)$ are facets,
		then $\pr_{\F(X)}(\F(Y))=\pr_{\C(X)}(\C(Y))$ and
		$\pr_{\F(Y)}(\F(X))=\pr_{\C(Y)}(\C(X))$.
	\end{itemize}
\end{proposition}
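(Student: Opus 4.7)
The plan is to prove items (i)--(ix) in order, with (iii) as the computational heart and (vi) as the combinatorial core; the remaining items drop out as consequences of these two together with the hypotheses on facets and the CUOM structure.

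For (i), $d(\C(X),\C(Y))=|S(X,Y)|$ follows from the Hamming-distance formula on the hypercube. For the face distance, exhibit a witness: for any tope $T$ of $\covectors$, the sign vectors $X\circ Y\circ T\in\F(X)$ and $Y\circ X\circ T\in\F(Y)$ are topes, and a direct coordinate-by-coordinate inspection (split by membership in $\underline{X}\cap\underline{Y}$, $\underline{X}\setminus\underline{Y}$, $\underline{Y}\setminus\underline{X}$, $X^0\cap Y^0$) shows they differ on exactly $S(X,Y)$, giving $d(\F(X),\F(Y))\le|S(X,Y)|$; the reverse inequality is automatic from $\F(X)\subseteq\C(X)$, $\F(Y)\subseteq\C(Y)$. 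Item (ii) follows at once from (i): a vertex of $\F(X)$ realising the face-to-face distance $|S(X,Y)|$ automatically realises the cube-to-cube distance.

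The key step is (iii). For $\pr_{\F(Y)}\F(X)\supseteq\F(X\circ Y)$, take a tope $T=(X\circ Y)\circ Z\in\F(X\circ Y)\subseteq\F(X)$; then $T':=(Y\circ X)\circ Z$ lies in $\F(Y)$ and the same coordinate inspection as in (i) yields $d(T,T')=|S(X,Y)|$, whence $T\in\pr_{\F(Y)}\F(X)$. For the reverse inclusion, let $T\in\F(X)$ be closest to some $T'\in\F(Y)$: we have $T_e=X_e$ on $\underline{X}$ (from $T\in\F(X)$), and distance minimality forces $T_e=Y_e$ on $\underline{Y}\setminus\underline{X}$, since any other choice would add to the symmetric difference without reducing $S(X,Y)$. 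Together these two conditions say exactly $X\circ Y\le T$. Items (iv) and (v) follow: $\underline{X\circ Y}=\underline{X}\cup\underline{Y}=\underline{Y\circ X}$ so the two projections are parallel, while the parallelness of $\F(X),\F(Y)$ unfolds to $X\circ Y=X$ and $Y\circ X=Y$, each equivalent to an inclusion of supports, hence to $\underline{X}=\underline{Y}$.

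For (vi), I induct on $k=|S(X,Y)|$. The base case $k=1$ applies (SE) to $X,Y$ on the unique separator coordinate to produce a common parent $W\in\covectors$ so that $\F(X)$ and $\F(Y)$ are opposite facets of $\F(W)$; the inductive step uses (SE) to peel off a single separator coordinate, producing a covector $X_1\in\covectors$ with $\underline{X_1}=\underline{X}$ and $|S(X_1,Y)|=k-1$, then concatenates the galleries. Item (vii): $\F(X)$ being a facet forces $X$ to be minimal in $(\covectors,\le)$, so as long as $\underline{Y}\not\subseteq\underline{X}$ (which is the situation in which (viii) is invoked) we have $X\circ Y>X$ strictly, hence $\F(X\circ Y)\subsetneq\F(X)$. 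For (viii), the UOM $\F(X)$ has all proper faces cubes by Lemma~\ref{lem:defUOM}, so $\F(X\circ Y)$ is a cube; symmetrically for $\F(Y\circ X)$. Finally (ix) combines (ii) and (viii): both $\pr_{\F(X)}\F(Y)=\F(Y\circ X)$ and $\pr_{\C(X)}\C(Y)$ are cubes sitting inside $\C(X)$, the first contained in the second by (ii), and both are spanned by the same set of $\Theta$-classes (the coordinates free on $X^0\cap Y^0$, with $T_e=Y_e$ fixed on $\underline{Y}\setminus\underline{X}$), so they coincide.

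The main obstacle is the induction in (vi), which requires careful bookkeeping to preserve $\underline{X_i}=\underline{X}=\underline{Y}$ across successive applications of (SE) and to ensure that the resulting intermediate faces are pairwise facets of common parent faces. A secondary subtlety is the parallel-facet edge case $\underline{X}=\underline{Y}$ in (vii)--(ix), where the projections collapse to the full faces $\F(X), \F(Y)$; this has to be either excluded from the hypothesis or handled separately by observing that in a CUOM parallel distinct facets force the faces themselves to already be cubes, so all conclusions still hold.
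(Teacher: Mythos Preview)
Your treatment of (i)--(v) is sound and, for (iii), arguably cleaner than the paper's: you compute directly that a tope $T\in\pr_{\F(Y)}\F(X)$ must satisfy $T_e=Y_e$ on $\underline{Y}\setminus\underline{X}$, whereas the paper argues by contradiction via an adjacent tope. Likewise your (ix) is more direct than the paper's convexity argument: once (viii) gives that $\F(X\circ Y)$ is a subcube of $\Q(U)$, the smallest cube containing it is itself, so $\F(X\circ Y)=\C(X\circ Y)=\pr_{\C(Y)}\C(X)$.

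The real gap is in (vi). Your inductive step asserts that (SE) ``peels off a single separator coordinate'' to produce $X_1\in\covectors$ with $\underline{X_1}=\underline{X}$ and $|S(X_1,Y)|=k-1$, but (SE) applied to $X,Y$ at $e\in S(X,Y)$ only yields $Z$ with $Z_e=0$ and $Z_f=X_f$ for $f\notin S(X,Y)$; on $S(X,Y)\setminus\{e\}$ the axiom says nothing. Composing $Z$ with $\pm X$ or $\pm Y$ may therefore flip several separator coordinates at once, or none, so you cannot guarantee $|S(X,X_1)|=1$ and the induction may fail to progress. The paper resolves this by passing to the deletion $\covectors\setminus A$ with $A=U\setminus\underline{X}$: there $X,Y$ become \emph{topes}, the tope graph is a partial cube, and a shortest path furnishes a neighbour $Z'$ at Hamming distance $1$ from $X$. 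Lifting $Z'$ to $Z\in\covectors$ gives $S(X,Z)=\{e\}$, and now (SE) applied to $X,Z$ (not $X,Y$) produces the desired parent $X'$ with $X'_f=X_f$ for all $f\ne e$; then (FS) yields $X''=X'\circ(-X)$, the required adjacent parallel face. This detour through the deletion is the missing ingredient in your sketch.

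A smaller point: in (vii) you correctly isolate the condition $\underline{Y}\not\subseteq\underline{X}$, but you do not establish it. The paper derives it from (vi): if $X\circ Y=X$ then $\F(X)$ and $\F(Y\circ X)$ are parallel, and a geodesic gallery of positive length would embed the facet $\F(X)$ in a strictly larger face, contradicting maximality; length zero forces $Y\le X$, hence $\F(X)\subseteq\F(Y)$. So once (vi) is fixed, your handling of (vii)--(ix) goes through.
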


\begin{proof}
	(i): From the definition of $\C(X)$ and $\C(Y)$ it follows that
	$\F(X)$ and $\C(X)$ have the same $\Theta$-classes and $\F(Y)$ and $\C(Y)$
	have the same $\Theta$-classes. Therefore the set of $\Theta$-classes
	separating the faces $\F(X)$ and $\F(Y)$ is the same as the set of
	$\Theta$-classes separating the cubes $\C(X)$ and $\C(Y)$ and coincides
	with $S(X,Y)$. Therefore, $d(\F(X),\F(Y))=d(\C(X),\C(Y))=|S(X,Y)|$.

	(ii): $\pr_{\F(X)}(\F(Y))\subseteq \pr_{\C(X)}(\C(Y))$ and
	$\pr_{\F(Y)}(\F(X))\subseteq \pr_{\C(Y)}(\C(X))$ follow from  (i).
	
	(iii): Note that for any two covectors $X$ and $Y$,
	$\underline{X\circ Y}=\underline{Y\circ X}$ and $S(X,Y)=S(X\circ Y,Y\circ
	X)$ hold. Since
	$d(\F(X\circ Y),\F(Y\circ X))=|S(X\circ Y,Y\circ X)|$ and $S(X\circ
	Y,Y\circ
	X)=S(X,Y)$ from property (i) we obtain that $d(\F(X),\F(Y))=d(\F(X\circ
	Y),\F(Y\circ X))$, thus $\F(X\circ Y)\subseteq \pr_{\F(Y)}(\F(X))$ and
	$\F(Y\circ X)\subseteq \pr_{\F(X)}(\F(Y))$. To prove the converse
	inclusions, suppose by way of contradiction that there exists a tope $Z\in
	\pr_{\F(Y)}(\F(X))\setminus \F(X\circ Y)$. Since $\pr_{\F(Y)}(\F(X))$ is
	gated,
	we can suppose that $Z$ is adjacent to a tope $Z'$ of $\F(X\circ Y)$. Let
	$e$
	be the element (a $\Theta$-class) on which $Z$ and $Z'$ differ, say
	$Z_e=+1$
	and $Z'_e=-1$. Since $X\le Z$ and $X\le Z'$, this implies that $X_e=0$. If
	$Y_e=0$, this would imply that $(X\circ Y)_e=0$, thus $Z$ would belong to
	$\F(X\circ Y)$, contrary to our choice of $Z$. Thus $Y_e=-1$. This implies
	that
	$d(Z,Y')\ge |S(X,Y)|+1$ for any tope $Y'\in \F(Y)$. Indeed, $Y'_e=-1$ and
	$Y'_f=-Z_f$ for any $f\in S(X,Y)$. This contradiction shows that
	$\pr_{\F(Y)}(\F(X))=\F(X\circ Y)$ and $\pr_{\F(X)}(\F(Y))=\F(Y\circ X)$.

	(iv): In view of (iii), we can rephrase  the definition of
	parallel
	faces as follows: $\F(X)$ and $\F(Y)$ are parallel if and only if
	$\F(X)=\F(X\circ Y)$ and $\F(Y)=\F(Y\circ X)$, i.e., if and only if
	$X=X\circ
	Y$ and $Y=Y\circ X$. Then one can easily see that  $X=X\circ Y$ and
	$Y=Y\circ
	X$ hold if and only if $\underline{X}=\underline{Y}$ holds.


	(v): This property follows from properties (iii) and (iv).
	
	(vi): Let $\F(X)$ and $\F(Y)$ be two parallel faces. By  (iv),
	$\underline{X}=\underline{Y}$. We proceed by induction on $k:=|S(X,Y)|$.
	Let $B:=\underline{X}=\underline{Y}$. Set $A:=U\setminus B$  and consider
	the COM $(B,\covectors\setminus A)$. Then $X':=X\setminus A$ and
	$Y':=Y\setminus A$ are topes of $\covectors\setminus A$. Note also that the
	distances between $X'$ and $Y'$ and between $X$ and $Y$ are equal to $k$.
	Since the tope graph $G(\covectors\setminus A)$ of the COM
	$\covectors\setminus A$ is an isometric subgraph of the cube $\{-1,+1\}^B$,
	$X'$ and $Y'$ can be connected in $G(\covectors\setminus A)$  by a shortest
	path  of $\{-1,+1\}^{B}$, i.e., by a path of length $k$. Let $Z'$ be the
	neighbor of $X'$ in this path. Then there exists $e\in S(X,Y)=S(X',Y')$
	such that $S(X',Z')=\{e\}$ and $S(Z',Y')=S(X,Y)\setminus \{ e\}$. By the
	definition of $\covectors\setminus A$, there exists a covector $Z\in
	\covectors$ such that $(Z\setminus A)_f=Z'_f$ for each $f\in B$. Hence $Z$
	contains $B$ in its support. Moreover, since
	$\underline{X}=\underline{Y}=B$, $S(X,Z)=\{ e\}$ and
	$S(Z,Y)=S(X,Y)\setminus \{ e\}$. In particular, $Z_f=X_f\ne 0$ for any
	$f\in B\setminus \{ e\}$. Applying the axiom  (SE) to $X,Z$ and $e\in
	S(X,Z)$, we will find $X'\in \covectors$  such that  $X'_e=0$  and
	$X'_f=(X\circ Z)_f$ for all $f\in U\setminus S(X,Z)$. Since
	$\underline{X}=\underline{Y}$ and $S(X,Z)=\{ e\}$, we conclude that
	$X'_f=X_f$ for any $f\in U\setminus  \{ e\}$.
	Consequently, $X'\le X$, i.e., $\F(X)$ is a face of $\F(X')$. Since
	$S(X,Z)=\{ e\}$, $\F(X)$ is a facet of $\F(X')$. By face symmetry (FS),
	$X'':=X'\circ(-X) \in  \covectors$. Notice that $\F(X'')$ is a facet of
	$\F(X')$ symmetric to $\F(X)$, i.e., $\F(X)$ and $\F(X'')$ are adjacent
	parallel faces. Notice also that
	$\underline{X}''=\underline{X}=\underline{Y}$ and, since $X''_e=-X_e$, that
	$S(X'',Y)=S(X,Y)\setminus \{ e\}$. By induction hypothesis, the parallel
	faces $\F(X'')$ and $\F(Y)$ can be connected in $\covectors$ by a geodesic
	gallery. Adding to this gallery the face $\F(X')$, we obtain a geodesic
	gallery connecting $\F(X)$ and $\F(Y)$.

	

	(vii): This property follows from property (vi).
	
	(viii): By (vii) and Proposition \ref{CUOM},  $\pr_{\F(Y)}(\F(X))$
	is a cube as a proper face of $\F(X)$.
	
	(ix):  By (viii),  $\pr_{\F(Y)}(\F(X))$ is a cube  and by
	(iii),   $\pr_{\F(Y)}(\F(X))=\F(X\circ Y)$. By (ii) this cube $\F(X\circ
	Y)$ is  included in the cube  $\pr_{\C(Y)}(\C(X))$.  Suppose 
	that this inclusion is proper. Let $e$ be an element (a $\Theta$-class) of
	the support of $\pr_{\C(Y)}(\C(X))$ which does not belong to the support of
	$\F(X\circ Y)$. Suppose without loss of generality that $Z_e=+1$ for all
	$Z\in \F(X\circ Y)$, i.e., all topes $Z$ of $\F(X\circ Y)$ belong to the
	halfspace $G^+_e$ of $G$.  From the definition of the cubes $\C(X)$ and
	$\C(Y)$,  we conclude that the halfspace $G^-_e$ of $G$ must contains a
	tope $X'$ of $\F(X)$ and a tope $Y'$ of $\F(Y)$. From the definition of the
	mutual gates, we must have a shortest path in $G$ from $X'$ to $Y'$
	traversing via a tope of $\pr_{\F(Y)}(\F(X))$ and a tope of
	$\pr_{\F(X)}(\F(Y))$. But this is impossible because $X',Y'$ belong to
	$G^-_e$ while all the topes of  $\pr_{\F(Y)}(\F(X))=\F(X\circ Y)$  are
	included in $G^+_e$ and $G^-_e$ and $G^+_e$ are convex because
	$G(\covectors)$ is a partial cube.
\end{proof}

\subsection{Proof of Theorem \ref{CUOMtoAMP}}\label{proofCUOMtoAMP}

Let $\covectors$  be a CUOM and $G$ be its tope graph. Let
$\F_1=\F(X_1),\dots,\F_n=\F(X_n)$ be the facets of $\covectors$. Each $\F_i$ is
a UOM and let $\amp(\F_i)$ be the ample completion of $G(\F_i)$ obtained by
Lemma \ref{lem:UOMtoAOM} ($\amp(\F_i)$ is contained in $\C(\F_i)$). Let
$G^*_i=\amp(\F_1)\cup\dots\cup \amp(\F_i)\cup \F_{i+1}\cup\dots\cup \F_n$; in
words, $G^*_i$ is obtained from $G$ by replacing the first $i$ faces
$\F_1,\ldots,\F_i$ by their ample completions $\amp(\F_1),\ldots,\amp(\F_i)$.
Finally, set $G^*:=G^*_n$.
We assert that $G^*$ is ample. For this we will use the amalgamation
results for COMs and  ample partial cubes and Proposition
\ref{thm:gated_set_completion} about single gated set extensions of partial
cubes. Proposition \ref{thm:gated_set_completion} will ensure that each partial
completion $G^*_i$ is a partial cube and its VC-dimension does not increase. As
a result, the final graph $G^*$ is a partial cube and has VC-dimension $d$.

To apply Proposition \ref{thm:gated_set_completion} to each  $G^*_i$, we need that each not yet completed cell $\F_{i+1},\dots ,\F_n$ of $G$ remains
gated in $G^*_1,\ldots, G^*_i$. By Proposition \ref{prop:projection}(ix),
independently in which order the faces $\F_i=\F(X_i)$ and $\F_j=\F(X_j)$ are
completed ($\F_i$ before $\F_j$ or $\F_j$ before $\F_i$), the mutual
projections of $\F_i$ and $\F_j$ initially coincide with those of the cubes
$\C(X_i)$ and $\C(X_j)$, the gate of any vertex $Z\in \amp(\F_i)$ in $\F_j$ (or
of any vertex $Z\in \amp(\F_j)$ in $\F_i$) in each occurring partial completion
will coincide with the gate of $Z$ in the cube $\C(X_j)$ (respectively, with
the gate of $Z$ in the cube $\C(X_i)$). Hence, each partially completed graph
$G^*_i$ is a partial cube and all remaining faces $\F_{i+1},\dots,\F_n$
are gated in $G^*_i$. Thus we can apply Proposition
\ref{thm:gated_set_completion} to the partial cube $G^*_i$ and the remaining
faces  $\F_{i+1},\ldots,F_n$.

Now we show that any edge $uv$ of $G^*$ is included in some completion
$\amp(\F_i)$ of a facet $\F_i$ of $G$. Suppose $u\in \amp(\F_i)$ and $v\in
\amp(\F_j)$. By construction, $\amp(\F_i)\subseteq \C(\F_i)$ and
$\amp(\F_j)\subseteq \C(\F_j)$. Therefore, if $u$ and $v$ are adjacent,
necessarily one of the vertices $u,v$, say $v$, belongs to $\C(\F_i)\cap
\C(\F_j)$. Since $G$ is a tope graph of a CUOM, $\C(\F_i)\cap \C(\F_j)$ is a
proper (cube )face
of $\F_i$ and of $\F_j$. Consequently, $u\in \amp(F_i)$ and $v\in F_i$, and we
are done.

To show that $G^*$ is ample, we use induction on the number of faces of $G$ and
the amalgamation procedures for COMs and ample partial cubes, see
Propositions \ref{amalgamCOM} and \ref{AMPamalgam}. If $G$ consists of a
single maximal face, then we are done by Lemma \ref{lem:UOMtoAOM}. Otherwise,
by Proposition \ref{amalgamCOM} $\covectors$ is a COM-amalgam of two COMs
$\covectors'$ and $\covectors''$ with tope graphs $G'$ and $G''$ such that (1)
every facet of $G$ is a facet of $G'$ or of $G''$ and (2) their intersection
$G_0=G'\cap G''$ is a the tope graph of the  COM $\covectors'\cap
\covectors''$. This implies that $G',G'',$ and $G_0$ are tope graphs of CUOMs: each facet  of
each of them is either (a) a facet of $G$, and thus is the tope graph of a CUOM, or (b) is a
proper face of $G$, and thus is a cube. We call the facets of type (a)
\emph{original facets} and the facets of type (b) \emph{cube facets}.

Let $(G')^*$ be the union of all cube facets of $G'$ and of the ample
completions $\amp(\F_i)$ of all original facets $F_i$ of the tope graph of a
CUOM $G'$. Clearly,
$(G')^*$ is obtained by the completion method described above and applied to
the facets of $G'$.  Analogously, we define the completions $(G'')^*$ and
$(G_0)^*$ of $G''$ and $G_0$, respectively. Since $G',G'',$ and $G_0$ are tope graphs of CUOMs
with less vertices than $G$, by induction hypothesis,  $(G')^*, (G'')^*$ and
$(G_0)^*$ are ample completions of $G',G'',$ and $G_0$, respectively.
Moreover, since each facet of $G$ is a facet of at least one of $G'$ and $G''$,
by the construction and by  what has been proved above, the vertex-set and the
edge-set of the partial cube $G^*$ is the union of the vertex-sets and the
edge-sets of ample partial cubes $(G')^*$ and  $(G'')^*$. Consequently,
$((G')^*,(G_0)^*,(G'')^*)$ is an isometric cover of $G^*$,  i.e., $G^*$ is an
AMP-amalgam of $(G')^*$ and $(G'')^*$. By Proposition \ref{AMPamalgam}, $G^*$
is ample. This concludes the proof of Theorem \ref{CUOMtoAMP}.
\qed

\begin{rexample}
In Fig. \ref{fig:running_example_CUOM_completion} we present an ample completion of the running example $M$ (recall that $M$ is the tope graph of a CUOM),
obtained as in the proof of Theorem \ref{CUOMtoAMP}.
\end{rexample}

\begin{figure}[h]\centering
	\includegraphics[width=.35\textwidth]{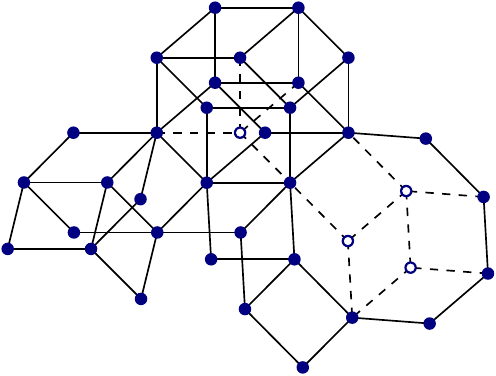}
	\caption{An ample completion of the running example $M$.}
	\label{fig:running_example_CUOM_completion}
\end{figure}

\section{Discussion}
We proved that the tope graph of every OM or CUOM of VC-dimension $d$ has an 
ample completion of the same VC-dimension.
For OMs, this result is proved in two stages: first, we complete the OM to a UOM  (using the theory of
oriented matroids) and then, recursively we complete the tope graph of the resulting UOM to an AMP.
For CUOMs, the completion is obtained by completing each facet independently  and by taking the union of
those facet completions.  Since ample set systems of VC-dimension $d$ admit 
labeled compression schemes of size $d$~\cite{MoWa} and this property is 
closed under taking subsystems, see Subsection 
\ref{subsect_sample_compression_schemes}, from Theorems \ref{OMtoAMP} and 
\ref{CUOMtoAMP} we obtain :

\begin{corollary}
	Concept classes defined by the topes of an OM or a CUOM of VC-dimension $d$ 
	admit labeled compression schemes of size $d$.
\end{corollary}

For general COMs, one can envisage the same strategy: complete each  facet of 
the tope graph
and take their union. However, if the completion
of faces is done as for OMs, 
then, as shown in the following example, a few difficulties arise.

\begin{example}
	In Fig. \ref{fig:C6P3}(a)  we present the tope graph $G$ of a COM  of
	VC-dimension 3, which is
	the Cartesian product $\C_8\product P_3$. It consists of two facets (which
	are both prisms $\C_8\product P_2$) glued together along a common face
	$\C_8$. In  Fig. \ref{fig:C6P3}(b) we complete each facet to the tope graph of a UOM. 
	However, the resulting graph is not even a partial cube. This problem
	arises for any completion of the two facets to tope graphs of UOMs.
Nevertheless, the graph has an ample completion of the same VC-dimension, see
Fig. \ref{fig:C6P3}(c).
\end{example}

\begin{figure}[h]\centering
	\includegraphics[height=5cm]{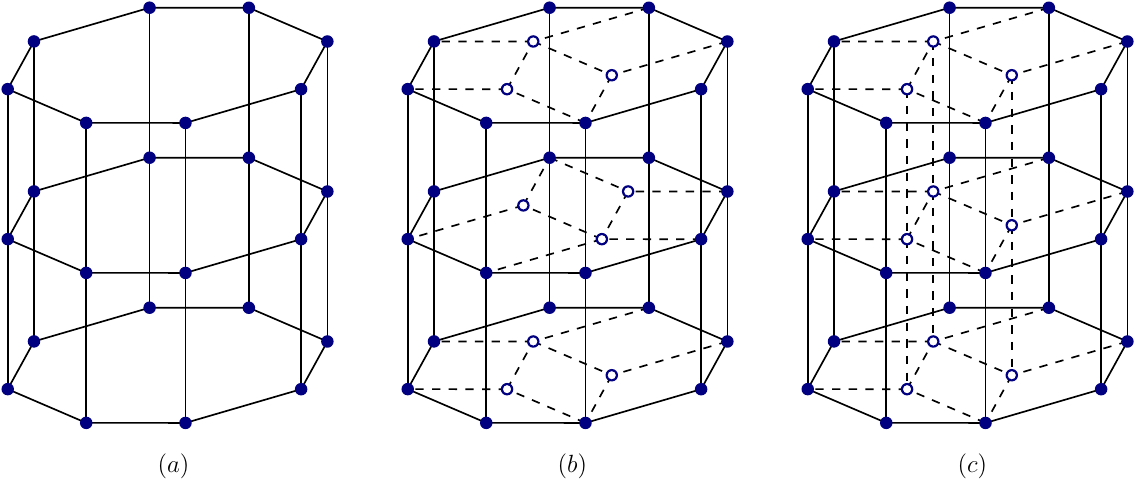}
	\caption{(a) The tope graph $G$ of a COM corresponding to $C_8\product P_3$.  (b) The
	partial completion of $G$ after completing the two facets. (c) The smallest
	ample completion of $G$.}\label{fig:C6P3}
\end{figure}

Let us discuss what we learn from the above example.
The intersections of ample set systems with cubes is ample and  all faces of 
the tope graph $G$ of
a COM are gated (and thus convex), thus if $\amp(G)$ is an ample completion of $G$ and $\F(X)$
is a face of $G$, then the intersection of $\amp(G)$ with the smallest cube $\C(X)$
containing $\F(X)$ is ample and thus is an ample completion of $\F(X)$. This explains
why we should take care of the completions of faces.

The completion of $C_8\product P_3$  from  Fig. \ref{fig:C6P3}(c) satisfies the
following \emph{parallel faces completion property}: any two parallel faces
$\F(X)$ and $\F(Y)$ of $G$ are completed in the same way, i.e., the isomorphism
between $\F(X)$ to $\F(Y)$ (given by metric projection) extends to an
isomorphism between the completions $\amp(G)\cap \C(X)$ and $\amp(G)\cap
\C(Y)$.  Since in tope graphs of COMs parallel faces are not facets, in  CUOMs they are cubes,
and we conclude that the completion of CUOMs satisfies the parallel faces
completion property. We believe, that \emph{Conjecture \ref{conjectureCOM} can be
strengthened by furthermore imposing the parallel faces completion property.}

%

\begin{example}
	\label{partialcube-completionVCdim4}
	In \cite{ChKnPh} we proved that any partial cube of VC-dimension 2 admits
	an ample completion of VC-dimension 2. The example from Fig.
	\ref{fig:contre_ex} shows that this is no longer true for partial cubes of
	VC-dimension 3. The graph is an isometric subgraph of $Q_5$, has
	VC-dimension $3$, and all its ample completions have VC-dimension at least
	$4$. There are six such subgraphs of $Q_5$ and the one in Fig.
	\ref{fig:contre_ex} is an
	isometric subgraph of all the others. On the other hand, all tope graphs of
	COMs in $Q_5$
	satisfy Conjecture~\ref{conjectureCOM}. The examples and their analysis
	have been obtained using SageMATH~\cite{sage} and the database of partial
	cubes in $Q_5$~\cite{Ma}.
\end{example}

\begin{figure}[h]\centering
	\includegraphics[height=3cm]{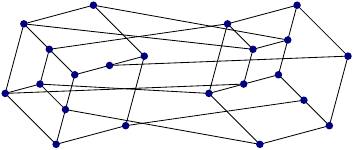}
	\caption{A partial cube of VC-dimension 3 that cannot be completed to an
	ample partial cube of the same VC-dimension.}
    \label{fig:contre_ex}
\end{figure}

The class of partial cubes, that can be completed to an ample partial cube of
VC-dimension $3$ is closed under pc-minors. What are the minimal excluded
pc-minors of this class?

\medskip\noindent
{\bf Acknowledgements.} We are grateful to the anonymous referees for useful comments and improvements. This work was supported by ANR project DISTANCIA, ANR-17-CE40-0015.
The second author was partially supported by the Spanish \emph{Ministerio de Ciencia, Innovaci\'on y  Universidades} through grants RYC-2017-22701 and PID2019-104844GB-I00.

\bibliographystyle{siamplain}

%

\begin{thebibliography}{99}
{\footnotesize
\bibitem{AlKn} M. Albenque and K. Knauer, Convexity in partial cubes: the hull number,  Discr. Math.  339 (2016), 866--876.

\bibitem{AnRoSa}  R.P. Anstee, L. R\'onyai, and A. Sali, Shattering news, Graphs and Comb. 18 (2002), 59--73.



\bibitem{BaChDrKo} H.-J. Bandelt, V. Chepoi, A. Dress, and J. Koolen, Combinatorics of lopsided sets, Eur. J. Comb. 27 (2006), 669--689.

\bibitem{BaChDrKounp} H.-J. Bandelt, V. Chepoi, A. Dress, and J. Koolen, unpublished.

\bibitem{BaChKn} H.-J. Bandelt, V. Chepoi, and K. Knauer, COMs: complexes of oriented matroids, J. Comb. Th., Ser. A 156 (2018), 195--237.


\bibitem{BjLVStWhZi} A. Bj\"orner, M. Las Vergnas, B. Sturmfels, N. White, and G. Ziegler,
      Oriented Matroids, Encyclopedia of Mathematics and its Applications, vol. 46,
      Cambridge University Press, Cambridge, 1993.



\bibitem{BlLV} R. Bland and M. Las Vergnas, Orientability of matroids, J. Comb. Th. Ser. B, 23 (1978), 94--123.

\bibitem{BoRa} B. Bollob\'as and A.J. Radcliffe, Defect Sauer results, J. Comb. Th. Ser. A 72 (1995), 189--208.


\bibitem{ChChMoWa} J. Chalopin, V. Chepoi, S. Moran, and M.K. Warmuth, Unlabeled sample compression schemes and corner
peelings for ample and maximum classes, ICALP 2019, pp.34:1-34:15,  full version: arXiv:1812.02099v1.

\bibitem{ChChMInRaVa} J. Chalopin, V. Chepoi, F. Mc Inerney, S. Ratel, and Y. Vax\`es, Distinguishing and compressing balls in graphs (in preparation). 

\bibitem{Ch_thesis} V. Chepoi, $d$-Convex sets in graphs, Dissertation, Moldova State Univ.,
Chi\c{s}in\v{a}u, 1986.

\bibitem{Ch_hamming} V. Chepoi, Isometric subgraphs of Hamming graphs and
$d$-convexity, Cybernetics 24 (1988), 6--10.


%
\bibitem{ChKnMa} V. Chepoi, K. Knauer, and T. Marc, Hypercellular graphs: 
partial cubes without $\Q_3^-$ as partial cube minor, Discr. Math. 343 (2020), 
111678.

\bibitem{ChKnPh} V. Chepoi, K. Knauer, M. Philibert, Two-dimensional partial cubes, Electron. J. Comb. 27 (2020), P3.29.



\bibitem{daS95}
I. da~Silva, Axioms for maximal vectors of an oriented
  matroid: a combinatorial characterization of the regions determined by an
  arrangement of pseudohyperplanes, Eur. J. Comb., 16 (1995), 125--145.

\bibitem{Dj} D.\v{Z}. Djokovi\'{c}, Distance--preserving subgraphs of hypercubes, J. Comb. Th. Ser. B 14 (1973), 263--267.

\bibitem{Dr} A.W.M. Dress, Towards a theory of holistic clustering, DIMACS Ser. Discr. math. Theoret. Comput.Sci., 37 Amer. Math. Soc. 1997, pp. 271--289.
%

\bibitem{DrSch} A. W. M. Dress and R. Scharlau, Gated sets in metric spaces, Aequationes Math. 34 (1987), 112--120.


\bibitem{FlWa}  S. Floyd and M.K. Warmuth, Sample compression, learnability, and the Vapnik-Chervonenkis dimension, Machine Learning 21 (1995), 269--304.

\bibitem{FoLa} J. Folkman and J. Lawrence, Oriented matroids, J. Comb. Th. Ser. B, 25 (1978), 199--236.

\bibitem{GaWe}  B. G\"artner and E. Welzl, Vapnik-Chervonenkis dimension and (pseudo-)hyperplane arrangements, Discr. Comput. Geom. 12 (1994), 399--432.

%
%
%
%
%

\bibitem{KnMa} K. Knauer and T. Marc, On tope graphs of complexes of oriented matroids, Discr. Comput. Geom. 63 (2020), 377--417.

\bibitem{KnMa1} K. Knauer and T. Marc, Corners and simpliciality in oriented matroids and partial cubes, arXiv:2002.11403, 2020.

\bibitem{La} J. Lawrence, Lopsided sets and orthant-intersection of convex sets,  Pac. J. Math.  104 (1983), 155--173.

\bibitem{LiWa}  N. Littlestone and M. Warmuth, Relating data compression and learnability, Unpublished, 1986.

%

\bibitem{Ma} T. Marc, Repository of partial cubes
\url{https://github.com/tilenmarc/partial_cubes}.

\bibitem{MoWa} S. Moran and M. K. Warmuth, Labeled compression schemes for extremal classes, ALT 2016, 34--49.

\bibitem{MoYe}  S. Moran and A. Yehudayoff, Sample compression schemes for VC classes, J. ACM 63 (2016), 1--21.


%
%
%
\bibitem{Pa} A. Pajor, Sous-espaces $\ell_1^n$ des espaces de Banach, Travaux en Cours, Hermann, Paris, 1985
%
%

\bibitem{RuRuBa} B.I.P.  Rubinstein,  J.H.  Rubinstein,  and  P.L.  Bartlett,  Bounding  embeddings  of  VC  classes  into maximum classes, in:  V. Vovk, H. Papadopoulos,
A. Gammerman (Eds.), Measures of Complexity. Festschrift for Alexey Chervonenkis, Springer, 2015, pp. 303--325.

%
\bibitem{Sauer} N. Sauer, On the density of families of sets, J. Comb. Th., Ser. A 13 (1972), 145--147.

\bibitem{Shelah}  S. Shelah, A combinatorial problem, stability and order for models and theories in infinitary languages, Pac. J. Math. 41 (1972), 247--261.

\bibitem{She} G.C. Shephard, Combinatorial properties of associated zonotopes, Can. J. Math. 26 (1974), 302--321.

\bibitem{sage}
W.\thinspace{}A. Stein et~al., \emph{{S}age {M}athematics {S}oftware
({V}ersion 8.1)}, The Sage Development Team, 2017, {\tt
http://www.sagemath.org}.

\bibitem{VaCh} V.N. Vapnik and A.Y. Chervonenkis, On the uniform convergence of relative frequencies of events to their probabilities, Theory Probab. Appl. 16 (1971), 264--280.

\bibitem{Wiedemann} D.H. Wiedemann, Hamming geometry, PhD Thesis, University of Ontario, 1986, re-typeset 2006.
}
\end{thebibliography}

\end{document}